\newenvironment{enumerater}{\begin{enumerate}[\upshape (1)]}%
{\end{enumerate}}
\newcommand{\pup}[1]{\textup{(}{#1}\textup{)}}
\newcommand{\fring}{\textit{f}-ring}
\newcommand{\lgrp}{$\ell$-group}
\newcommand{\lhom}{$\ell$-ho\-mo\-mor\-phism}
\newcommand{\lrep}{$\ell$-rep\-re\-sentable}
\newcommand{\rrep}{real-rep\-re\-sentable}
\newcommand{\brep}{Brumfiel-rep\-re\-sentable}
\newcommand{\jirr}{join-ir\-re\-duc\-i\-ble}
\newcommand{\eqdef}{\underset{\mathrm{def}}{=}}
\newcommand{\lextimes}{\times_{\mathrm{lex}}}
\newcommand{\Pow}{\mathfrak{P}}
\DeclareMathOperator{\Cr}{C_r}
\DeclareMathOperator{\Spec}{Spec}
\DeclareMathOperator{\Specl}{Spec_{\ell}}
\DeclareMathOperator{\Specr}{Spec_{r}}
\DeclareMathOperator{\Specb}{Spec_{B}}
\newcommand{\FV}{\operatorname{F}_{\cV}}
\DeclareMathOperator{\tcl}{cl}
\DeclareMathOperator{\card}{card}
\newcommand{\les}{\leqslant}
\newcommand{\ga}{\alpha}
\newcommand{\gb}{\beta}
\newcommand{\gf}{\varphi}
\newcommand{\gk}{\kappa}
\newcommand{\gl}{\lambda}
\newcommand{\go}{\omega}
\newcommand{\eps}{\varepsilon}
\newcommand{\bgk}{\boldsymbol{\kappa}}
\newcommand{\beps}{\boldsymbol{\varepsilon}}
\newcommand{\bck}[1]{[\![{#1}]\!]}
\newcommand{\gL}{\Lambda}
\newcommand{\gO}{\Omega}
\newcommand{\sd}{\mathbin{\smallsetminus}}
\newcommand{\two}{\mathbf{2}}
\newcommand{\three}{\mathbf{3}}
\newcommand{\four}{\mathbf{4}}
\newcommand{\ol}[1]{\overline{#1}}
\newcommand{\pI}[1]{\bigl({#1}\bigr)}
\newcommand{\pII}[1]{\Bigl({#1}\Bigr)}
\newcommand{\set}[1]{\left\{#1\right\}}
\newcommand{\setm}[2]{\set{{#1}\mid{#2}}}
\newcommand{\vecm}[2]{\left({#1}\mid{#2}\right)}
\newcommand{\seq}[1]{\langle{#1}\rangle}
\newcommand{\seql}[1]{{\langle{#1}\rangle}^{\ell}}
\newcommand{\seqr}[1]{{\langle{#1}\rangle}^{\mathrm{r}}}
\newcommand{\op}{\mathrm{op}}
\DeclareMathOperator{\rF}{F}
\newcommand{\es}{\varnothing}
\newcommand{\res}{\mathbin{\restriction}}
\newcommand{\ZZ}{\mathbb{Z}}
\newcommand{\RR}{\mathbb{R}}
\newcommand{\Idlc}{\operatorname{Id}^{\ell}_{\mathrm{c}}}
\DeclareMathOperator{\Idrc}{Id^r_c}
\DeclareMathOperator{\Cond}{Cond}
\newcommand{\cC}{{\mathcal{C}}}
\newcommand{\cKo}{{\overset{\circ}{\mathcal{K}}}}
\newcommand{\cL}{{\mathcal{L}}}
\newcommand{\cV}{{\mathcal{V}}}
\numberwithin{equation}{section}
\newtheorem*{stat}{\name}
\newcommand{\name}{testing}
\theoremstyle{plain}
\newtheorem{theorem}{Theorem}[section]
\newtheorem{proposition}[theorem]{Proposition}
\newtheorem{corollary}[theorem]{Corollary}
\newtheorem{lemma}[theorem]{Lemma}
\theoremstyle{definition}
\newtheorem{definition}[theorem]{Definition}
\newtheorem{notation}[theorem]{Notation}
\theoremstyle{remark}
\newtheorem{remark}[theorem]{Remark}
\newcommand{\qedc}{{\qed}~{\rm Claim~{\theclaim}.}}
\newcommand{\qedsc}{{\qed}~{\rm Claim.}}
\numberwithin{figure}{section}
\numberwithin{table}{section}
\newcommand{\ba}{\boldsymbol{a}}
\newcommand{\bb}{\boldsymbol{b}}
\newcommand{\bc}{\boldsymbol{c}}
\newcommand{\be}{\boldsymbol{e}}
\newcommand{\bbf}{\boldsymbol{f}}
\newcommand{\bg}{\boldsymbol{g}}
\newcommand{\bh}{\boldsymbol{h}}
\newcommand{\br}{\boldsymbol{r}}
\newcommand{\bu}{\boldsymbol{u}}
\newcommand{\bx}{\boldsymbol{x}}
\newcommand{\by}{\boldsymbol{y}}
\newcommand{\xCN}{\mathbf{CN}}
\newcommand{\xS}{\mathbf{S}}
\newcommand{\xell}{\boldsymbol{\ell}}
\newcommand{\xSell}{\xS\xell}
\newcommand{\xBr}{\mathbf{Br}}
\newcommand{\xSBr}{\xS\xBr}
\newcommand{\xR}{\mathbf{R}}
\newcommand{\xSR}{\xS\xR}
\newcommand{\bD}{\boldsymbol{D}}
\newcommand{\vx}{\mathsf{x}}
\newcommand{\vy}{\mathsf{y}}
\newcommand{\vt}{\mathsf{t}}
\newcommand{\scL}{\mathbin{\mathscr{L}}}
\title[Real spectrum, $\ell$-spectrum, Brumfiel spectrum]%
{Real spectrum versus $\ell$-spectrum\\
via Brumfiel spectrum}
\author[F. Wehrung]{Friedrich Wehrung}
\address{LMNO, CNRS UMR 6139\\
D\'epartement de Math\'ematiques\\
Universit\'e de Caen Normandie\\
14032 Caen cedex\\
France}
\email{friedrich.wehrung01@unicaen.fr}
\urladdr{https://wehrungf.users.lmno.cnrs.fr}
\date{\today}
\subjclass[2010]{06D05; 06D50; 06F20; 03E05; 06D35}
\keywords{\lgrp; \fring; real-closed; ideal; prime; radical; spectral space; spectrum; $\ell$-spectrum; Brumfiel spectrum; real spectrum; representable; sober; Stone duality; completely normal; distributive; lattice; closed map; convex map; condensate}
\begin{document}

\begin{abstract}
It is well known that the real spectrum of any commutative unital ring, and the $\ell$-spectrum of any Abelian lat\-tice-or\-dered group with order-unit, are all completely normal spectral spaces.
We prove the following results:
\begin{enumerater}
\item\label{RS2lS}
Every real spectrum can be embedded, as a spectral subspace, into some $\ell$-spectrum.

\item\label{RSnotlS}
Not every real spectrum is an $\ell$-spectrum.

\item\label{subRS}
A spectral subspace of a real spectrum may not be a real spectrum.

\item\label{lSnotRS}
Not every $\ell$-spectrum can be embedded, as a spectral subspace, into a real spectrum.

\item\label{CNnotlS}
There exists a completely normal spectral space which cannot be embedded, as a spectral subspace, into any $\ell$-spectrum.

\end{enumerater}
The commutative unital rings and Abelian lat\-tice-or\-dered groups in~\eqref{RSnotlS}, \eqref{subRS}, \eqref{lSnotRS} all have cardinality~$\aleph_1$\,, while the spectral space of~\eqref{CNnotlS} has a basis of cardinality~$\aleph_2$\,.
Moreover, \eqref{subRS} solves a problem by Mellor and Tressl.
\end{abstract}

\maketitle

\section{Introduction}\label{S:Intro}
Denote by~$\xS\mathbf{X}$ the class of all spectral subspaces of members of a class~$\mathbf{X}$ of spectral spaces.
Most of the paper is devoted to proving the containments and non-containments, between classes of spectral spaces, represented in Figure~\ref{Fig:Spectra}.
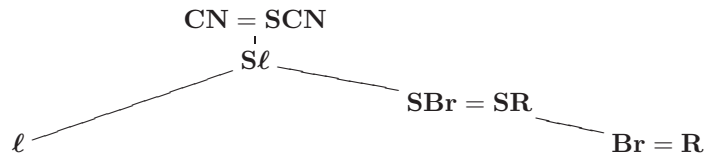
\begin{figure}[htb]
 \[
 \xymatrixrowsep{.22pc}\xymatrixcolsep{2pc}
 \def\labelstyle{\displaystyle}
 \xymatrix{
 && \xCN=\xS\xCN &&\\
 && \xSell\ar@{-}[u] &&\\
 &&& \xSBr=\xSR\ar@{-}[ul] &\\
 \xell\ar@{-}[uurr] &&&& \xBr=\xR\ar@{-}[ul]
 }
 \]
\caption{Classes of completely normal spectral spaces}
\label{Fig:Spectra}
\end{figure}
The classes in question are the following:
\begin{itemize}
\item
$\xCN$, the class of all completely normal spectral spaces;

\item
$\xell$, the class of $\ell$-spectra of all Abelian \lgrp{s} with order-unit;

\item
$\xBr$, the class of Brumfiel spectra of all commutative unital \fring{s};

\item
$\xR$, the class of real spectra of all commutative unital rings.

\end{itemize}

The context of our work is the following.
The classical construction of the Zariski spectrum of a commutative unital ring (cf. Subsection~\ref{Su:Zspec}) extends to many contexts, including distributive lattices, \emph{lat\-tice-or\-dered groups} (\emph{\lgrp{s}} for short), \emph{partially or\-dered rings}, yielding \emph{Stone duality}, the \emph{$\ell$-spectrum}, and the \emph{real spectrum}, respectively.
All the topological spaces thus obtained are \emph{spectral spaces}, that is, sober spaces in which the compact open subsets are a basis of the topology, closed under finite intersection.
Conversely, every spectral space is the spectrum of some bounded distributive lattice (Stone~\cite{Stone38a}) and also of some commutative unital ring (Hochster~\cite{Hoch1969}).

The paper will focus on the \emph{$\ell$-spectrum} of an Abelian \lgrp\ (cf. Subsection~\ref{Su:lspec}) and the \emph{real spectrum} of a commutative unital ring (cf. Subsection~\ref{Su:Rspec}).
Those two frameworks are connected by the \emph{Brumfiel spectrum} of a commutative \fring\ (cf. Subsection~\ref{Su:Bspec}).
All the spectral spaces thus obtained are \emph{completely normal}, that is, for all elements~$x$ and~$y$ in the closure of a singleton~$\set{z}$, either~$x$ belongs to the closure of~$\set{y}$ or~$y$ belongs to the closure of~$\set{x}$.

Prior to the present paper, part of the picture (Figure~\ref{Fig:Spectra}) was already known:
\begin{itemize}
\item
Delzell and Madden~\cite{DelMad1994} proved that $\xell\subsetneqq\xCN$ and $\xR\subsetneqq\xCN$.

\item
Delzell and Madden's result got amplified in Mellor and Tressl~\cite{MelTre2012}, who established that \emph{any class of spectral spaces containing~$\xR$, whose Stone dual lattices are definable by a class of~$\scL_{\infty,\gl}$-formulas for some infinite cardinal~$\gl$, has a member outside~$\xSR$}.
In particular, the class of all Stone duals of the spaces from~$\xR$ (resp., $\xSR$) are not $\scL_{\infty,\gl}$-definable.
Further, $\xSR\subsetneqq\xCN$.

\item
Delzell and Madden~\cite[Proposition~3.3]{DelMad1995} observed that $\xR\subseteq\xBr$.

\item
It follows easily from Madden and Schwartz \cite{MadSch1999} and Schwartz~\cite{Sch1989} that $\xBr\subseteq\xR$.
Consequently, $\xBr=\xR$ (cf. Corollary~\ref{C:BrS2RS1}).

\item
The author proved in~\cite{MV1} that \emph{every second countable completely normal spectral space is in~$\xell$}, and that moreover, \emph{the class of all Stone duals of spaces from~$\xell$ is not~$\scL_{\infty,\go}$-definable}.

\item
The author provided an example in~\cite[\S~5]{MV1} showing that $\xell\subsetneqq\xSell$.

\end{itemize}

The missing pieces provided in the present paper are the following:

\begin{itemize}
\item
Every Brumfiel spectrum, thus also every real spectrum, can be embedded, as a spectral subspace, into some $\ell$-spectrum.
This is stated in Corollary~\ref{C:Idl2Idr}.
Hence, $\xSBr\subseteq\xSell$.

\item
Not every real spectrum is an $\ell$-spectrum.
This is established in Theorem~\ref{T:RCRnotlrep}, \emph{via} the construction of a \emph{condensate}.
Hence, $\xR\not\subseteq\xell$.

\item
A spectral subspace of a real spectrum may not be a Brumfiel spectrum (thus also not a real spectrum).
This is stated in Corollary~\ref{C:NonHomrealSpec}, \emph{via} the construction of a condensate.
It follows that $\xSR\not\subseteq\xBr$.
This solves a problem of Mellor and Tressl~\cite{MelTre2012}.

\item
Not every $\ell$-spectrum can be embedded, as a spectral subspace, into a Brumfiel spectrum (thus also not into a real spectrum).
Hence, $\xell\not\subseteq\xSBr$.
This is stated in Corollary~\ref{C:NonrReprlGrp}.

\item
There exists a completely normal spectral space which cannot be embedded, as a spectral subspace, into any $\ell$-spectrum.
This is stated in Corollary~\ref{C:FVInotlrep}.
Hence, $\xSell\subsetneqq\xCN$.
The spectral space constructed there has~$\aleph_2$ compact open members.
The proof begins by coining a class of infinitary statements satisfied by all homomorphic images of \lrep\ lattices (Lemma~\ref{L:RefI}).
Then the proof technique loosely follows the one introduced by the author in~\cite{NonMeas} for solving representation problems of congruence lattices of lattices and nonstable K-theory of von~Neumann regular rings.
\end{itemize}

We point out that although not every $\ell$-spectrum is a real spectrum (cf. Corollary~\ref{C:NonrReprlGrp}), there is a formally related problem, on the interaction between Abelian \lgrp{s} and commutative rings, with a well known positive solution.
Let~$D$ be an integral domain with group of units~$U$ and field of fractions~$K$.
Denote by~$K^{\times}$ the multiplicative group of all nonzero elements of~$K$.
The \emph{group of divisibility of~$D$} (cf. Mo\v{c}ko\v{r}~\cite{Moc1983}) is the quotient group~$K^{\times}/U$, endowed with the unique translation-invariant partial ordering with positive cone~$D/U$.
Every Abelian \lgrp\ is the group of divisibility of some integral domain, which, in addition, can be taken a \emph{Bezout domain} (cf. Anderson \cite[page~4]{And1989}, where the result is credited to Krull, Jaffard, Kaplansky, and Ohm).
Thus, Corollary~\ref{C:NonrReprlGrp} illustrates the gap between the group of divisibility and the real spectrum.

Since Stone duality is more conveniently stated with \emph{bounded} distributive lattices, our results on spectral spaces are mostly formulated for commutative, \emph{unital} rings and Abelian \lgrp{s} \emph{with order-unit}.
On the other hand, most of our lattice-theoretical results are valid for lattices without top element, and thus formulated in that more general context.

\section{Basic concepts}\label{S:Basic}

For \lgrp{s} and \fring{s}, we refer the reader to Bigard, Keimel, and Wolfenstein~\cite{BKW} or Anderson and Feil~\cite{AnFe}.
For any partially ordered Abelian group~$G$, we set $G^+\eqdef\setm{x\in G}{x\geq0}$ (the \emph{positive cone} of~$G$) and
$G^{++}\eqdef\setm{x\in G}{x>0}$.
For $a,b\in G^+$, let $a\ll b$ hold if $ka\leq b$ for every positive integer~$k$.

For partially ordered Abelian groups~$G$ and~$H$, the \emph{lexicographical product} of~$G$ by~$H$, denoted~$G\lextimes H$, is the product group~$G\times H$, endowed with the positive cone consisting of all pairs $(x,y)$ with either $x>0$ or ($x=0$ and $y\geq0$).

For any chain~$\gL$, we denote by~$\ZZ\seq{\gL}$ the lexicographical power, of the chain~$\ZZ$ of all integers, by~$\gL$.
Hence the elements of~$\ZZ\seq{\gL}$ have the form $x=\sum_{i=1}^nk_ic_{\xi_i}$, where each $k_i\in\ZZ\setminus\set{0}$ and $\xi_1<\cdots<\xi_n$ in~$\gL$, and~$x$ belongs to the positive cone of~$\ZZ\seq{\gL}$ if{f} either $n=0$ (i.e., $x=0$) or $k_n>0$.
This endows~$\ZZ\seq{\gL}$ with a structure of a totally ordered Abelian group.

A \emph{lat\-tice-or\-dered group}, or \emph{\lgrp} for short, is a group endowed with a trans\-la\-tion-in\-vari\-ant lattice ordering.
All our \lgrp{s} will be Abelian and will thus be denoted additively.
Elements~$x$ and~$y$, in an \lgrp, are \emph{orthogonal} if $x\wedge y=0$.

A subset~$I$ in an Abelian \lgrp~$G$ is an \emph{$\ell$-ideal} if it simultaneously a subgroup of~$G$ and an order-convex sublattice of~$G$.

For any elements~$a$ and~$b$ in an Abelian \lgrp~$G$, we will set $a^+\eqdef a\vee0$, $a^-\eqdef(-a)\vee0$, $|a|\eqdef a\vee(-a)$, and $a\sd b\eqdef(a-b)^+=a-(a\wedge b)$.

A \emph{lat\-tice-or\-dered ring} is a ring endowed with a lattice ordering invariant under additive translations and preserved by multiplicative translations by positive elements.
A lat\-tice-or\-dered ring~$A$ is an \emph{\fring} if $x\wedge y=0$ implies that $x\wedge yz=x\wedge zy=0$ whenever $x,y,z\in A^+$ and $x\wedge y=0$.
Equivalently, $A$ is a subdirect product of totally ordered (not necessarily unital) rings (cf. Bigard, Keimel, and Wolfenstein \cite[Th\'eor\`eme~9.1.2]{BKW}).

\begin{lemma}[folklore]\label{L:Gabcsd}
Let~$G$ be an Abelian \lgrp\ and let $a,b,c\in G$.
Then
\begin{itemize}
\item
$a\sd c\leq(a\sd b)+(b\sd c)$.

\item
$(a\sd b)\wedge(b\sd a)=0$.

\item
If, in addition, $G$ is the underlying additive \lgrp\ of an \fring~$A$ and $c\in A^+$, then
$ca\sd cb=c(a\sd b)$ and $ac\sd bc=(a\sd b)c$.
\end{itemize}
\end{lemma}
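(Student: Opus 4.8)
The plan is to treat the three items separately, reducing the first two to standard identities in Abelian \lgrp{s} and handling the last one \emph{via} the subdirect representation of \fring{s} by totally ordered rings recalled above. Throughout I would use only the order axioms together with translation invariance of the lattice operations, that is, $x+(u\vee v)=(x+u)\vee(x+v)$ and the dual law.

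For the first item, I would substitute $x\eqdef a-b$ and $y\eqdef b-c$, so that $a-c=x+y$ and the claim reads $(x+y)^+\leq x^++y^+$. Since $x^+\geq x$, $y^+\geq y$, and $x^+,y^+\geq0$, the element $x^++y^+$ is an upper bound of both $x+y$ and~$0$; as $(x+y)^+=(x+y)\vee0$ is by definition the least such upper bound, the inequality follows at once.

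For the second item I would again set $x\eqdef a-b$ and observe that $a\sd b=x^+$, while $b\sd a=(b-a)^+=(-x)^+=x^-$. Thus the assertion becomes the familiar identity $x^+\wedge x^-=0$, which I would derive by translation invariance: subtracting~$x^-$ throughout and using $x^+-x^-=x$ gives $(x^+\wedge x^-)-x^-=x\wedge0=-x^-$, whence $x^+\wedge x^-=0$.

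The third item carries the only genuine content. Writing $a\sd b=a-(a\wedge b)$ and using distributivity of the ring multiplication over subtraction, the desired equalities reduce to the single distributive law $c(a\wedge b)=(ca)\wedge(cb)$ for $c\in A^+$, together with its right-handed analogue. The inequality $c(a\wedge b)\leq(ca)\wedge(cb)$ is immediate, since left multiplication by $c\geq0$ is order-preserving; the reverse inequality is where the \fring\ hypothesis is essential, and I expect this to be the main obstacle, as the identity fails for arbitrary lat\-tice-or\-dered rings. To establish it I would invoke the subdirect representation of~$A$ as a product of totally ordered rings: in a totally ordered ring $a\wedge b$ is whichever of $a,b$ is smaller, and since multiplication by $c\geq0$ is order-preserving, $c(a\wedge b)$ is the smaller of $ca,cb$, that is, $(ca)\wedge(cb)$; as both the lattice and the ring operations are computed coordinatewise in the subdirect product, the identity lifts to~$A$. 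Substituting back then yields $ca\sd cb=ca-c(a\wedge b)=c\bigl(a-(a\wedge b)\bigr)=c(a\sd b)$, and the right-handed equality $ac\sd bc=(a\sd b)c$ follows by the symmetric argument.
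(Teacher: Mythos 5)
The paper states this lemma as folklore and gives no proof at all, so there is nothing to compare against; judged on its own, your argument is correct. Items (i) and (ii) are handled by the standard reductions to $(x+y)^+\leq x^++y^+$ and $x^+\wedge x^-=0$, both of which you derive soundly from translation invariance. For item (iii) you correctly identify that everything reduces to $c(a\wedge b)=ca\wedge cb$ for $c\geq0$, and your appeal to the subdirect decomposition of an \fring\ into totally ordered rings (the very characterization the paper recalls from Bigard--Keimel--Wolfenstein, Th\'eor\`eme~9.1.2) is exactly the intended justification: the inequality $c(a\wedge b)\leq ca\wedge cb$ holds in any lat\-tice-or\-dered ring, while the reverse inequality is checked coordinatewise in the totally ordered factors, where multiplication by a nonnegative element preserves the (total) order. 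No gaps.
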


A subset~$I$ in an \fring~$A$ is an \emph{$\ell$-ideal} if it is, simultaneously, an ideal of the underlying ring of~$A$ and an order-convex sublattice of~$A$.

Totally ordered rings are particular cases of \fring{s}.
About those, we will need the following lemma.

\begin{lemma}\label{L:1notinP}
Let~$A$ be a totally ordered \pup{not necessarily unital} commutative domain and let~$I$ be a proper order-convex ideal of~$A$.
Then for every $x\in I$ and every $a\in A$, the relation $|xa|\ll|a|$ holds.
\end{lemma}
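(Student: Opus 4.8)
The plan is to argue by contradiction after reducing to the essential inequality. First I would dispose of the trivial case $a=0$, where $|xa|=0=|a|$ and the relation holds vacuously, and assume $a\neq0$, so that $|a|>0$. Since $A$ is totally ordered, $|x|$ equals either $x$ or $-x$, both of which lie in the subgroup~$I$; hence $y\eqdef|x|$ belongs to $I$ and $y\geq0$. Using that the absolute value is multiplicative in a totally ordered ring, $|xa|=|x|\,|a|=y|a|$, so the claim $|xa|\ll|a|$ amounts to showing $ky|a|\leq|a|$ for every positive integer~$k$.

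Conceptually, the point is that in the totally ordered field of fractions $\operatorname{Frac}(A)$ the desired inequality reads $ky\leq1$ for all~$k$; that is, $y$ is infinitesimal relative to the unit, a condition that no longer mentions~$a$. Thus the lemma is really the assertion that every element of a proper order-convex ideal is infinitesimal in $\operatorname{Frac}(A)$, and I would expect properness of~$I$ to be exactly what forces this.

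To carry this out inside~$A$ itself, I would suppose for contradiction that $ky|a|>|a|$ for some $k\geq1$. Properness of~$I$ provides some $w\in A\sd I$; replacing~$w$ by~$|w|$ (still outside the subgroup~$I$, and nonzero since $0\in I$) I may take $c\eqdef|w|>0$ with $c\notin I$. Multiplying the inequality $ky|a|>|a|$ by $c>0$ and using that~$A$ is a domain, I get $(kyc)|a|>c|a|$, whence $(kyc-c)|a|>0$; since $|a|>0$, order-cancellation in the ordered domain~$A$ yields $kyc>c$. But $y\in I$ forces $yc\in I$ and hence $kyc\in I$, while $0\leq c\leq kyc$; order-convexity of~$I$ then puts $c\in I$, contradicting the choice of~$c$. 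This contradiction gives $ky|a|\leq|a|$ for all~$k$, that is, $|xa|\ll|a|$.

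The main obstacle is the absence of a multiplicative unit: one cannot simply divide the inequality $ky|a|>|a|$ by~$|a|$ to isolate~$y$. The device that overcomes this is to re-multiply by an element~$c$ lying \emph{outside}~$I$ --- available precisely because $I$ is proper --- and then to use the cancellation property of the ordered domain to transfer the inequality from~$|a|$ onto~$c$; the order-convexity of~$I$ then delivers the contradiction. The two routine facts to check carefully are the elementary cancellation statement that $pd>qd$ with $d>0$ implies $p>q$ in a totally ordered domain, and the multiplicativity $|xa|=|x|\,|a|$.
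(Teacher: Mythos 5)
Your argument is correct and is essentially the paper's own proof: both suppose $n|x|\,|a|>|a|$, multiply by a further element, cancel $|a|$ using that multiplication by a positive element of the domain is an order-embedding, and then invoke the ideal plus order-convexity properties of~$I$ to contradict properness. The only cosmetic difference is that you transfer the inequality onto a single chosen witness $c\notin I$, whereas the paper transfers it onto every $b\in A^+$ and concludes $I=A$.
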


\begin{proof}
We will use repeatedly the fact that for every $c\in A^{++}$, the assignment $t\mapsto tc$ defines an order-embedding of~$A$ into itself.
Since~$A$ is totally ordered, we may assume that $a\geq0$ and $x\geq0$.
Let $n<\go$ and suppose that $nxa>a$ (so $a>0$).
Then for every $b\in A^+$, $nxab\geq ab\geq0$, thus (as $a>0$) $nxb\geq b\geq0$.
Since~$x\in I$ and~$I$ is an ideal of~$A$, we get $nxb\in I$.
Since~$I$ is order-convex, it follows that~$b\in I$.
This holds for every $b\in A^+$, whence $I=A$, a contradiction.
Since~$A$ is totally ordered, it follows that $nxa\leq a$.
\end{proof}

For lattice theory we refer the reader to Gr\"atzer~\cite{LTF}, Johnstone~\cite{Johnst1982}.
For any elements~$a$ and~$b$ in a distributive lattice~$D$ with zero, a \emph{splitting} of~$(a,b)$ is a pair~$(x,y)$ of elements of~$D$ such that $a\vee b=a\vee y=x\vee b$ and $x\wedge y=0$.
Observe that in that case, $x\leq a$ and $y\leq b$.
We say that~$D$ is \emph{completely normal} if every pair of elements in~$D$ has a splitting.

We denote by~$P^{\op}$ the \emph{opposite poset} of a poset~$
P$.
For any functions~$f$ and~$g$ with common domain~$X$, we set
 \[
 \bck{f\neq g}\eqdef\setm{x\in X}{f(x)\neq g(x)}\,.
 \]
We denote by~$\Pow(X)$ the powerset of any set~$X$, ordered under set inclusion.
We denote by~$\go_\ga$, or~$\aleph_\ga$ according to the context (``ordinal versus cardinal''), the~$\ga$th infinite cardinal, and we set $\go\eqdef\go_0=\set{0,1,2,\dots}$.

Throughout the paper, ``countable'' means ``at most countable''.

\section{Stone duality between distributive lattices with zero and generalized spectral spaces}
\label{S:Spectral}

In this section we recall a few well known facts on Stone duality for bounded distributive lattices.
For references and more details, see Johnstone \cite[\S~II.3]{Johnst1982}, Gr\"atzer \cite[\S~II.5]{LTF}.

\begin{definition}\label{D:Spectral}
For a topological space~$X$, we denote by~$\cKo(X)$ the set of all compact%
\footnote{Throughout the paper, ``compact'' means what some other references call ``quasicompact''; in particular, it does not imply Hausdorff.}
open subsets of~$X$, ordered under set inclusion.
We say that~$X$ is
\begin{itemize}
\item[---]
\emph{sober}, if every \jirr\ member, of the lattice of all closed subsets of~$X$, is the closure of a unique singleton%
\footnote{
Due to the uniqueness, every sober space is~$\mathrm{T}_0$ (not all references assume this).
}
;

\item[---]
\emph{generalized spectral}, if it is sober, $\cKo(X)$ is a basis of the topology of~$X$, and $U\cap V$ is compact whenever~$U$ and~$V$ are compact open subsets of~$X$;

\item[---]
\emph{spectral}, if it is simultaneously compact and generalized spectral. 
\end{itemize}

The \emph{specialization preorder} on~$X$ is defined by
 \[
 x\les y\text{ if }y\in\tcl_{X}(\set{x})\,,\quad\text{for all }x,y\in X\,.
 \]
\end{definition}

The \emph{spectrum}~$\Spec{D}$, of a distributive lattice~$D$ with zero, is defined as the set of all (proper) prime ideals of~$D$, endowed with the closed sets $\setm{P\in\Spec{D}}{I\subseteq P}$, for subsets (equivalently, ideals)~$I$ of~$D$.
The specialization order on~$\Spec{D}$ is just set-theoretical inclusion.
The correspondence between distributive lattices with zero and generalized spectral spaces is spelled out in the following result, originating in Stone~\cite{Stone38a}.

\goodbreak
\begin{theorem}[Stone]\label{T:Stone}
\hfill
\begin{itemize}
\item
For every distributive lattice~$D$ with zero, the space~$\Spec{D}$ is generalized spectral and the assignment $a\mapsto\setm{P\in\Spec{D}}{a\notin P}$ defines an isomorphism $\ga_D\colon D\to\cKo(\Spec{D})$.

\item
For every generalized spectral space~$X$, $\cKo(X)$ is a distributive lattice with zero and the assignment $x\mapsto\setm{U\in\cKo(X)}{x\notin U}$ defines a homeomorphism $\xi_X\colon X\to\Spec{\cKo(X)}$.
\end{itemize}
\end{theorem}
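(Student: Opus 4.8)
The plan is to prove the two directions separately, each of which reduces to the classical prime separation lemma for distributive lattices: \emph{if $I$ is an ideal and $F$ a filter with $I\cap F=\es$, then there is a prime ideal containing $I$ and disjoint from $F$}. For the first item I would work with the sets $\ga_D(a)\eqdef\setm{P\in\Spec D}{a\notin P}$. Directly from the definition of a prime ideal one checks $\ga_D(a\wedge b)=\ga_D(a)\cap\ga_D(b)$, $\ga_D(a\vee b)=\ga_D(a)\cup\ga_D(b)$, and $\ga_D(0)=\es$, so $\ga_D$ is a $(\vee,\wedge,0)$-homomorphism; since every open set of $\Spec D$ is, by definition of the topology, a union of such sets, the $\ga_D(a)$ form a basis. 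Injectivity of $\ga_D$ follows from prime separation applied, whenever $a\nleq b$, to the principal ideal $\dnw b\eqdef\setm{x\in D}{x\leq b}$ and the principal filter $\upw a$, which are disjoint. I would then show each $\ga_D(a)$ is compact: if $\ga_D(a)\subseteq\bigcup_i\ga_D(a_i)$ then no prime ideal contains all the $a_i$ while omitting $a$, so $a$ must lie in the ideal generated by $\set{a_i}$ (otherwise separate that ideal from $\upw a$), i.e.\ $a\leq a_{i_1}\vee\cdots\vee a_{i_n}$, yielding a finite subcover. Conversely every compact open is a finite union of basic opens, hence of the form $\ga_D(a)$; thus $\ga_D$ is a bijection onto $\cKo(\Spec D)$, hence a lattice isomorphism, and $\Spec D$ is generalized spectral because $\ga_D(a)\cap\ga_D(b)=\ga_D(a\wedge b)$ is compact.

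The one genuinely topological point of the first item is sobriety. Writing $V(I)\eqdef\setm{P\in\Spec D}{I\subseteq P}$, I would first record, again from prime separation, that every ideal is the intersection of the primes above it; consequently every closed set $C$ equals $V(I)$ for $I\eqdef\bigcap C$, and the specialization order is inclusion. Given a \jirr\ (nonempty, not a union of two strictly smaller closed sets) closed set $C=V(I)$ with $I=\bigcap C$, I would prove $I$ prime: if $a\wedge b\in I$ then $C=\pI{C\cap V(\dnw a)}\cup\pI{C\cap V(\dnw b)}$, so irreducibility forces, say, $C\subseteq V(\dnw a)$, whence $a\in\bigcap C=I$. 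Then $C=\tcl_{\Spec D}(\set{I})$, and uniqueness of the generic point is immediate from $\mathrm{T}_0$-ness.

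For the second item, $\cKo(X)$ is a $(\vee,\wedge,0)$-sublattice of the powerset of $X$: it is closed under finite unions (finite unions of compacts are compact) and, by the very definition of \emph{generalized spectral}, under finite intersections; hence it is a distributive lattice with zero. I would verify that $\xi_X(x)\eqdef\setm{U\in\cKo(X)}{x\notin U}$ is a prime ideal, proper because $\cKo(X)$ is a basis (so some compact open contains $x$). The identity $\xi_X^{-1}\pI{\ga_{\cKo(X)}(U)}=U$ shows at once that $\xi_X$ is continuous, and, together with $\mathrm{T}_0$-ness, that it is injective.

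The hard part will be surjectivity of $\xi_X$, and this is exactly where the compactness built into ``generalized spectral'' is essential. Given a prime ideal $Q$ of $\cKo(X)$, I would set $O\eqdef\bigcup Q$ and $C\eqdef X\sd O$. The pivotal observation is that for a \emph{compact} open $U$ one has $U\subseteq O$ iff $U\in Q$: compactness turns a cover of $U$ by members of $Q$ into a single member of $Q$, and $Q$ is down-closed. From this I would deduce $C\neq\es$ (otherwise every compact open lies in $Q$) and, crucially, that $C$ is irreducible: from $C\subseteq A\cup B$ with $A,B$ closed one gets $\pI{X\sd A}\cap\pI{X\sd B}\subseteq O$, and were neither $X\sd A$ nor $X\sd B$ contained in $O$, the basis would furnish compact opens $U\subseteq X\sd A$ and $V\subseteq X\sd B$ with $U,V\notin Q$, whence $U\cap V\subseteq O$ would force $U\cap V\in Q$, contradicting closure of the prime filter $\cKo(X)\sd Q$ under $\cap$. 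Sobriety of $X$ then supplies a generic point $x$ with $\tcl_X(\set{x})=C$, and the chain of equivalences $U\in Q\Leftrightarrow U\subseteq O\Leftrightarrow U\cap C=\es\Leftrightarrow x\notin U$ gives $\xi_X(x)=Q$. Finally, being a bijection with $\xi_X^{-1}\pI{\ga_{\cKo(X)}(U)}=U$, the map $\xi_X$ carries the basis $\cKo(X)$ onto the basis $\setm{\ga_{\cKo(X)}(U)}{U\in\cKo(X)}$ bijectively, so it is open as well, and therefore a homeomorphism.
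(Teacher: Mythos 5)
Your proof is correct and complete: both halves reduce cleanly to the prime separation lemma, and the two genuinely topological points (sobriety of $\Spec D$, and surjectivity of $\xi_X$ via the irreducibility of $X\sd\bigcup Q$ and the equivalence $U\in Q\Leftrightarrow U\subseteq\bigcup Q$ for compact $U$) are handled exactly where the work lies. The paper itself gives no proof of this theorem -- it is recalled as classical, with references to Stone, Johnstone, and Gr\"atzer -- and your argument is the standard one found in those sources, so there is nothing to compare beyond noting that your write-up is a faithful, self-contained rendition of it.
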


For distributive lattices~$D$ and~$E$, a $0$-lattice homomorphism $f\colon D\to E$, and $Q\in\Spec{E}$, the inverse image $f^{-1}[Q]$ may be the whole of~$D$, in which case it does not belong to~$\Spec{D}$ (prime ideals are assumed to be proper).
This does not happen if we assume the map~$f$ to be \emph{cofinal}, that is, every element of~$E$ is bounded above by some element of the range of~$f$.

Say that a map $\gf\colon X\to Y$, between topological spaces, is \emph{spectral} if the inverse image under~$\gf$, of any compact open subset of~$Y$, is a compact open subset of~$X$.
If~$X$ and~$Y$ are both generalized spectral, then the map $\cKo(\gf)\colon\cKo(Y)\to\cKo(X)$, $V\mapsto\gf^{-1}[V]$ is a cofinal $0$-lattice homomorphism.
Hence we obtain the following statement of Stone's duality (spelled out in Rump and Yang \cite[page~63]{RumYan2009}), which extends the classical Stone duality between bounded distributive lattices and spectral spaces.

\begin{theorem}[Stone]\label{T:StoneDual}
The category of all distributive lattices with zero, with cofinal $0$-lattice homomorphisms, and the category of all generalized spectral spaces, with spectral maps, are dual, with respect to the natural transformations~$\ga$ and~$\xi$ given in Theorem~\textup{\ref{T:Stone}} and the functors given as follows:
\begin{itemize}
\item
The dual of a distributive lattice~$D$ with zero is its spectrum~$\Spec{D}$.
The dual of a $0$-lattice homomorphism $f\colon D\to E$ is the map\newline $\Spec{f}\colon\Spec{E}\to\Spec{D}$, $Q\mapsto f^{-1}[Q]$.

\item
The dual of a generalized spectral space~$X$ is the lattice~$\cKo(X)$.
The dual of a spectral map $\gf\colon X\to Y$ is the map $\cKo(\gf)\colon\cKo(Y)\to\cKo(X)$, $V\mapsto\gf^{-1}[V]$.

\end{itemize}
\end{theorem}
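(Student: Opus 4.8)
The plan is to verify that the two assignments described in the statement are mutually inverse contravariant functors, witnessed by the natural transformations~$\ga$ and~$\xi$; since Theorem~\ref{T:Stone} already provides, at the level of objects, that each~$\ga_D$ is a lattice isomorphism and each~$\xi_X$ is a homeomorphism, the whole burden is to check that the two morphism assignments are well defined and functorial and that~$\ga$ and~$\xi$ are \emph{natural}. I would first record well-definedness on arrows. For a cofinal $0$-lattice homomorphism $f\colon D\to E$ and $Q\in\Spec E$, the set $f^{-1}[Q]$ is a prime ideal of~$D$, proper precisely because~$f$ is cofinal (as observed just before the statement), so $\Spec f$ does map $\Spec E$ into $\Spec D$. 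Dually, for a spectral map $\gf\colon X\to Y$, the map $V\mapsto\gf^{-1}[V]$ carries $\cKo(Y)$ into $\cKo(X)$ by spectrality and preserves finite unions, finite intersections, and~$\es$, hence is a $0$-lattice homomorphism; its cofinality comes from each $U\in\cKo(X)$ being compact while $\cKo(Y)$ is a basis of~$Y$ closed under finite unions, so $\gf[U]$ lies inside some member of $\cKo(Y)$. Functoriality, in the contravariant sense, is then immediate from $(g\circ f)^{-1}=f^{-1}\circ g^{-1}$ and $\id^{-1}=\id$, applied on each side.

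Next I would verify naturality. For the square attached to $f\colon D\to E$, the key computation is, for $a\in D$,
 \[
 \cKo(\Spec f)\pI{\ga_D(a)}=(\Spec f)^{-1}\rI{\ga_D(a)}
 =\setm{Q\in\Spec E}{f(a)\notin Q}=\ga_E\pI{f(a)}\,,
 \]
which says exactly that $\cKo(\Spec f)\circ\ga_D=\ga_E\circ f$ (and incidentally reproves that $\Spec f$ is spectral). For the square attached to $\gf\colon X\to Y$, I would compute, for $x\in X$,
 \[
 \Spec(\cKo\gf)\pI{\xi_X(x)}=(\cKo\gf)^{-1}\rI{\xi_X(x)}
 =\setm{V\in\cKo(Y)}{\gf(x)\notin V}=\xi_Y\pI{\gf(x)}\,,
 \]
yielding $\Spec(\cKo\gf)\circ\xi_X=\xi_Y\circ\gf$. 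Since each~$\ga_D$ is an isomorphism of distributive lattices with zero (automatically cofinal) and each~$\xi_X$ is a homeomorphism (hence spectral with spectral inverse), $\ga$ and~$\xi$ are natural isomorphisms, and the claimed duality follows.

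The two displayed identities are purely formal manipulations of inverse images, so I expect the only genuinely delicate points to be the two cofinality conditions, namely properness of $f^{-1}[Q]$ and cofinality of $\cKo(\gf)$; but these are exactly the hypotheses isolated in the discussion preceding the statement, and the substantive object-level content --- that $\ga_D$ is bijective and that sobriety forces $\xi_X$ to be a homeomorphism --- is already packaged into Theorem~\ref{T:Stone}. Thus no further work is needed on that front, and the proof reduces to assembling these pieces.
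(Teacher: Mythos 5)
The paper gives no proof of this theorem --- it is presented as a classical result, delegated to Stone, Johnstone, Gr\"atzer, and Rump--Yang --- so there is no in-paper argument to compare against; your proposal supplies exactly the standard verification those references contain, and it is correct and complete. In particular you correctly isolate the only two non-formal points: properness of $f^{-1}[Q]$ via cofinality of~$f$, and cofinality of $\cKo(\gf)$ via compactness of $\gf[U]$ together with $\cKo(Y)$ being a basis closed under finite unions; the two naturality computations and the functoriality of inverse images are routine and done correctly, and the object-level content is rightly outsourced to Theorem~\ref{T:Stone}.
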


\begin{remark}\label{Rk:StoneDual}
The case where~$\gf$ is the inclusion map from a generalized spectral space~$X$ into a generalized spectral space~$Y$ is interesting.
We say that~$X$ is a \emph{spectral subspace} of~$Y$ if the topology of~$X$ is the topology induced by the topology of~$Y$ and the inclusion map from~$X$ into~$Y$ is spectral.
In that case, the dual map $\cKo(Y)\to\cKo(X)$, $V\mapsto X\cap V$ is a surjective lattice homomorphism.
Conversely, for every surjective lattice homomorphism $f\colon D\twoheadrightarrow E$, the spectral map $\Spec{f}\colon\Spec{E}\to\Spec{D}$ is a \emph{spectral embedding}, that is, it embeds~$\Spec{E}$ into~$\Spec{D}$ as a spectral subspace.
Hence, \emph{spectral subspaces correspond, \emph{via} Stone duality, to surjective lattice homomorphisms}.
\end{remark}

The generalized spectral spaces~$X$ that we will consider in this paper will mostly be \emph{completely normal}.
By Monteiro~\cite[Th\'e\-o\-r\`e\-me~V.3.1]{Mont1954}, this is equivalent to saying that the dual lattice~$\cKo(X)$ is completely normal (cf. Section~\ref{S:Basic} for the definition of completely normal lattices).

\section{Zariski, $\ell$, Brumfiel, real: spectra and lattices}
\label{S:ZlBR}

In this section we recall some well known facts on the various sorts of spectra and distributive lattices that will intervene in the paper.
We also include a few new results, such as Lemma~\ref{L:BrumConvex}.
For more details and references, we refer the reader to Delzell and Madden~\cite{DelMad1995}, Johnstone \cite[Chapter~5]{Johnst1982}, Keimel~\cite{Keim1995}, Coste and Roy~\cite{CosRoy1982}, Dickmann~\cite[Chapter~6]{Dickm1985}.

\subsection{Zariski spectrum}\label{Su:Zspec}
The (\emph{Zariski}) \emph{spectrum} of a commutative unital ring~$A$ is defined as the set~$\Spec{A}$ of all prime ideals of~$A$, endowed with the topology whose closed sets are exactly the sets $\Spec(A,I)\eqdef\setm{P\in\Spec{A}}{I\subseteq P}$, for subsets (equivalently, radical ideals)~$I$ of~$A$.

Denote by $\seqr{a_1,\dots,a_m}$ the radical ideal of~$A$ generated by elements $a_1$, \dots, $a_m$ of~$A$, and denote%
\footnote{The subscript~``c'' stands for ``compact'', which is the lat\-tice-theoretical formalization of ``finitely generated''.
The superscript~``r'' stands for ``radical''.}
by~$\Idrc{A}$ the set of all ideals of~$A$ of the form $\seqr{a_1,\dots,a_m}$ (\emph{finitely generated radical ideals}), ordered by set inclusion.
Due to the formulas
 \begin{align}
 \seqr{a_1,\dots,a_m}\vee\seqr{b_1,\dots,b_n}&=
 \seqr{a_1,\dots,a_m,b_1,\dots,b_n}\,,
 \label{Eq:JoinFinGen}\\
 \seqr{a_1,\dots,a_m}\cap\seqr{b_1,\dots,b_n}&=
 \seqr{a_ib_j\mid 1\leq i\leq m\text{ and }1\leq j\leq n}\,,
 \label{Eq:IntersFinGen}
 \end{align}
(where~$\vee$ stands for the join in the lattice of all radical ideals of~$A$),
$\Idrc{A}$ is a $0$-sublattice of the distributive lattice of all radical ideals of~$A$.

Since every radical ideal of~$A$ is the intersection of all prime ideals containing it, $\Idrc{A}$ is the Stone dual of~$\Spec{A}$ (cf. Delzell and Madden \cite[page~115]{DelMad1995}):

\begin{proposition}\label{P:IdrcA}
The Zariski spectrum~$\Spec{A}$, of a commutative unital ring~$A$, is a spectral space, and the assignment $I\mapsto\setm{P\in\Spec{A}}{I\not\subseteq P}$ defines an isomorphism from~$\Idrc{A}$ onto the Stone dual~$\cKo(\Spec{A})$ of~$\Spec{A}$.
\end{proposition}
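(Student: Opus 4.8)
The plan is to reduce the statement to the abstract Stone duality of Theorem~\ref{T:Stone}, applied to the distributive lattice $D\eqdef\Idrc{A}$. By the formulas~\eqref{Eq:JoinFinGen} and~\eqref{Eq:IntersFinGen} (already used above), $D$ is a distributive lattice whose zero is $\seqr{0}$ and whose unit is $\seqr{1}=A$; in particular $D$ has a largest element. First I would exhibit a homeomorphism $\gf\colon\Spec{A}\to\Spec{D}$ intertwining the map of the statement with the canonical isomorphism $\ga_D$ of Theorem~\ref{T:Stone}, and then read off every assertion from that theorem.

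For a ring-theoretic prime ideal $P$ of $A$, set $\gf(P)\eqdef\setm{I\in D}{I\subseteq P}$; conversely, for a lattice-theoretic prime ideal $Q$ of $D$, set $\gs(Q)\eqdef\bigcup Q$. I would check that $\gf(P)$ is a prime ideal of $D$: it is a down-set, it is closed under joins because $I,J\subseteq P$ and $P$ radical force $I\vee J\subseteq P$, it is proper because the unit $A$ of $D$ is not contained in the proper ideal $P$, and it is prime because $I\cap J\supseteq IJ$, so $I\cap J\subseteq P$ forces $IJ\subseteq P$, whence $I\subseteq P$ or $J\subseteq P$ by primality of $P$. Dually, $\gs(Q)$ is the union of the upward directed family $Q$, hence a radical ideal of $A$; it is proper since $\seqr{1}\notin Q$; and it is prime because, given $ab\in\gs(Q)$, say $ab\in I\in Q$, formula~\eqref{Eq:IntersFinGen} gives $\seqr{a}\cap\seqr{b}=\seqr{ab}\subseteq I\in Q$, so primality of $Q$ yields $\seqr{a}\in Q$ or $\seqr{b}\in Q$, i.e. $a\in\gs(Q)$ or $b\in\gs(Q)$. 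The equalities $\gs\gf=\id$ and $\gf\gs=\id$ are then routine: $\gs(\gf(P))=P$ because every $x\in P$ lies in $\seqr{x}\subseteq P$; and $\gf(\gs(Q))=Q$ because any $I=\seqr{a_1,\dots,a_m}\subseteq\gs(Q)$ already sits inside a single finite join $\seqr{a_1}\vee\cdots\vee\seqr{a_m}\in Q$ (using that $Q$ is closed under finite joins and is a down-set). Thus $\gf$ is a bijection.

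On the topological side, the basic open set $\ga_D(I)=\setm{Q\in\Spec{D}}{I\notin Q}$ pulls back under $\gf$ to $\setm{P\in\Spec{A}}{I\not\subseteq P}$, which is exactly the value at $I$ of the map in the statement; since these sets generate the respective topologies, $\gf$ is a homeomorphism intertwining $\ga_D$ with that map. By Theorem~\ref{T:Stone}, $\Spec{D}$ is generalized spectral and $\ga_D\colon D\to\cKo(\Spec{D})$ is an isomorphism; transporting along $\gf$ shows that $\Spec{A}$ is generalized spectral and that $I\mapsto\setm{P}{I\not\subseteq P}$ is an isomorphism $\Idrc{A}\to\cKo(\Spec{A})$. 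Finally, $\Spec{A}$ is compact: since $D$ has a greatest element $A$, no prime ideal of $D$ contains it, so $\ga_D(A)=\Spec{D}$ is a compact open set, i.e. $\Spec{D}$, hence $\Spec{A}$, is compact. Therefore $\Spec{A}$ is spectral.

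The main obstacle is the faithful translation of primality across the two settings, in particular the verification that $\gs(Q)$ is prime \emph{as a ring ideal}: this is the one step that genuinely uses the multiplicative structure of $A$, through the intersection formula~\eqref{Eq:IntersFinGen} specialised to principal radical ideals, $\seqr{a}\cap\seqr{b}=\seqr{ab}$. It is supported by the standard fact (invoked just before the statement) that every radical ideal is the intersection of the primes above it, which is what makes $\gs$ and $\gf$ mutually inverse. Everything else reduces to formal manipulations with down-sets and finite joins.
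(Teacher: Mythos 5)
Your proof is correct and is essentially the argument the paper has in mind: the paper dispenses with a proof entirely, justifying the proposition in one line (``every radical ideal is the intersection of the prime ideals containing it'') with a citation to Delzell and Madden, and your construction of the mutually inverse maps $P\mapsto\setm{I}{I\subseteq P}$ and $Q\mapsto\bigcup Q$, followed by transport along Theorem~\ref{T:Stone}, is exactly the standard verification underlying that one-liner. No gaps.
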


Due to the following deep result by Hochster~\cite{Hoch1969}, there is no need to give a name to the class of all lattices of the form~$\Idrc{A}$.

\begin{theorem}[Hochster]\label{T:Hochster}
Every spectral space is homeomorphic to the Zariski spectrum of some commutative unital ring.
Hence, every bounded distributive lattice is isomorphic to~$\Idrc{A}$ for some commutative unital ring~$A$.
\end{theorem}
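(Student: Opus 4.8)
The plan is to deduce both assertions from a single purely lattice-theoretic statement and then attack that statement by approximation with finite lattices. First I would observe that the two displayed sentences are equivalent. If~$D$ is a bounded distributive lattice, then $\Spec{D}$ is generalized spectral by Theorem~\ref{T:Stone}, and it is moreover compact because the top element~$1$ of~$D$ satisfies $\ga_D(1)=\Spec{D}$; hence $\Spec{D}$ is spectral. Conversely, given a spectral space~$X$ one sets $D\eqdef\cKo(X)$, a bounded distributive lattice, and $X\cong\Spec{D}$ by Theorem~\ref{T:Stone}. Combining this with Proposition~\ref{P:IdrcA}, which gives $\Idrc{A}\cong\cKo(\Spec{A})$, shows that ``$X\cong\Spec{A}$'' and ``$\cKo(X)\cong\Idrc{A}$'' amount to the same assertion. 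Hence it suffices to prove that \emph{every bounded distributive lattice~$D$ is isomorphic to~$\Idrc{A}$ for some commutative unital ring~$A$}.

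The core of the argument is the finite case, so I would first realize every \emph{finite} bounded distributive lattice. By Birkhoff duality such a lattice is isomorphic to the lattice $\Down(P)$ of down-sets of the finite poset $P\eqdef\Ji(D)$ of its join-irreducible elements; and since on a finite spectral space the topology is determined by the specialization order, realizing $\Down(P)$ as some~$\Idrc{A}$ is the same as producing a commutative unital ring whose prime spectrum, ordered by inclusion, is isomorphic to~$P$. For a finite chain this is achieved by a valuation ring of the appropriate rank, whose primes form a chain; for a general finite poset one glues such local building blocks along the covering relations of~$P$, by means of fibre products and localizations, so that the inclusion order among the resulting primes reproduces~$P$. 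Carrying out this gluing, and checking that no spurious primes are created, is the technical heart of Hochster's theorem.

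Granting the finite case, I would pass to an arbitrary bounded distributive lattice~$D$ by writing it as the directed union $D=\bigcup_i D_i$ of its finite $\{0,1\}$-sublattices, a filtered colimit in which every transition map is a $0,1$-lattice embedding. The aim is to lift this diagram to a filtered system of rings: realize each $D_i\cong\Idrc{A_i}$ as above and, for each embedding $D_i\hookrightarrow D_j$, choose a ring homomorphism $A_i\to A_j$ inducing it (equivalently, a surjective spectral map $\Spec{A_j}\twoheadrightarrow\Spec{A_i}$). Setting $A\eqdef\varinjlim A_i$, the conclusion $D\cong\Idrc{A}$ then follows once one knows that $\Idrc$ commutes with filtered colimits, that is, $\Idrc{(\varinjlim A_i)}\cong\varinjlim\Idrc{A_i}$. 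This last fact is essentially formal: by the formulas~\eqref{Eq:JoinFinGen} and~\eqref{Eq:IntersFinGen}, the join and meet of finitely generated radical ideals are computed from finitely many ring elements, and membership relations are witnessed by finitely much data, so in a filtered colimit every such finite configuration already lives in, and is detected by, some~$A_i$.

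The main obstacle is the coherent realization problem hidden in the previous paragraph: it is not enough to realize each $D_i$ in isolation, one must realize the \emph{transition maps} by ring homomorphisms compatibly, so that the entire diagram lifts. This is most naturally organized by dualizing to an inverse system of finite posets whose limit carries the constructible (patch) topology of the target space, and then building the rings~$A_i$ together with their connecting maps simultaneously, rather than one at a time. Arranging the valuation-type building blocks so that both the internal spectrum of each~$A_i$ and the maps between them come out correct is exactly where the construction becomes delicate; once a coherent finite system is in hand, the filtered colimit together with the preservation of~$\Idrc$ under filtered colimits completes the proof.
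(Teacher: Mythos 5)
The paper does not prove this statement at all: it is quoted verbatim as a known ``deep result'' with a citation to Hochster's 1969 paper, so there is no internal proof to compare against. Judged on its own, your proposal is a plausible roadmap in the spirit of Hochster's actual argument (and of W.~J. Lewis's realization of finite posets as prime spectra), but it is not a proof: both of the essential steps are announced rather than carried out. In the finite case you say that one ``glues'' valuation-ring building blocks along the covering relations of~$P$ ``by means of fibre products and localizations'' so that no spurious primes appear --- but this gluing, and the verification that the resulting ring has exactly the prescribed poset of primes, is precisely the content of the theorem in the finite case, and nothing in your sketch establishes it. Likewise, in the limit step you ``choose a ring homomorphism $A_i\to A_j$ inducing'' each transition map $D_i\hookrightarrow D_j$; the existence of even one such homomorphism for a given surjective spectral map of finite spectra is nontrivial (ring homomorphisms induce spectral maps, but not every spectral map between spectra arises this way, so realizability must be proved), and making these choices \emph{coherently} over the whole directed system of finite $\set{0,1}$-sublattices is a further obstruction that you name but do not resolve.

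The parts of your argument that are complete are the soft ones: the equivalence of the two displayed sentences via Theorem~\ref{T:Stone} and Proposition~\ref{P:IdrcA} is correct, and the commutation of~$\Idrc$ with directed colimits of rings does follow from~\eqref{Eq:JoinFinGen}, \eqref{Eq:IntersFinGen} and the finitary nature of radical membership. But these reductions only transfer the difficulty; they do not discharge it. As written, the proposal has a genuine gap at its ``technical heart,'' which is exactly where Hochster's paper does its work.
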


\subsection{$\ell$-spectrum and \lrep\ lattices}\label{Su:lspec}
The \emph{$\ell$-spectrum} of an Abelian \lgrp~$G$ is defined as the set $\Specl{G}$ of all prime $\ell$-ideals of~$G$, endowed with the topology whose closed sets are exactly the $\setm{P\in\Specl{G}}{I\subseteq P}$, for subsets (equivalently, $\ell$-ideals)~$I$ of~$G$.

Denote by $\seql{a_1,\dots,a_m}$, or $\seql{a_1,\dots,a_m}_G$ if~$G$ needs to be specified, the $\ell$-ideal of~$G$ generated by elements $a_1$, \dots, $a_m$ of~$G$, and denote
by~$\Idlc{G}$ the set of all $\ell$-ideals of~$G$ of the form $\seqr{a_1,\dots,a_m}$ (\emph{finitely generated $\ell$-ideals}), ordered by set inclusion.
Since $\seql{a_1,\dots,a_m}=\seql{a}$ where $a\eqdef\sum_{i=1}^m|a_i|$, we get $\Idlc{G}=\setm{\seql{a}}{a\in G^+}$.
Due to the formulas
 \begin{equation}\label{Eq:jjmmseql}
 \seql{a}\vee\seql{b}=\seql{a+b}\text{ and }
 \seql{a}\cap\seql{b}=\seql{a\wedge b}\,,\quad
 \text{for all }a,b\in G^+
 \end{equation}
(where~$\vee$ stands for the join in the lattice of all $\ell$-ideals of~$A$),
$\Idlc{G}$ is a $0$-sublattice of the distributive lattice of all $\ell$-ideals of~$G$.
It has a top element if{f}~$G$ has an order-unit.

Since every $\ell$-ideal of~$G$ is the intersection of all prime $\ell$-ideals containing it, $\Idrc{G}$ is the Stone dual of~$\Specl{G}$:

\begin{proposition}\label{P:IdlcG}
The $\ell$-spectrum~$\Specl{G}$, of any Abelian \lgrp~$G$, is a generalized spectral space, and the assignment $I\mapsto\setm{P\in\Spec{G}}{I\not\subseteq P}$ defines an isomorphism from~$\Idlc{G}$ onto the Stone dual~$\cKo(\Specl{G})$ of~$\Specl{G}$.
\end{proposition}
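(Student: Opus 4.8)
The plan is to reduce the statement to Stone's Theorem~\ref{T:Stone}, exactly as Proposition~\ref{P:IdrcA} reduces the Zariski case to that same duality. Since the formulas in~\eqref{Eq:jjmmseql} already exhibit $\Idlc{G}$ as a $0$-sublattice of the distributive lattice of all $\ell$-ideals of~$G$, the lattice $\Idlc{G}$ is a distributive lattice with zero, so Theorem~\ref{T:Stone} applies to $D\eqdef\Idlc{G}$. It then suffices to construct a homeomorphism $\Phi\colon\Specl{G}\to\Spec{D}$ under which the compact open set $\setm{\fp\in\Spec{D}}{J\notin\fp}$ (the image of $J\in D$ under the Stone isomorphism $\ga_D$) corresponds to $\setm{P\in\Specl{G}}{J\nsubseteq P}$; both assertions of the proposition will then follow by transporting Theorem~\ref{T:Stone} along~$\Phi$.

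First I would set $\Phi(P)\eqdef\setm{J\in D}{J\subseteq P}$ for each prime $\ell$-ideal~$P$ of~$G$, with candidate inverse $\Psi(\fp)\eqdef\bigcup_{J\in\fp}J$ for each prime ideal~$\fp$ of the lattice~$D$. The bulk of the work is checking that these are well defined and mutually inverse. That $\Phi(P)$ is a lattice ideal of~$D$ is immediate from~$P$ being an $\ell$-ideal; its \emph{primeness} comes from the identity $\seql{a}\cap\seql{b}=\seql{a\wedge b}$ of~\eqref{Eq:jjmmseql} together with the defining property of a prime $\ell$-ideal (namely $a\wedge b\in P$ forces $a\in P$ or $b\in P$, for $a,b\in G^+$); and $\Phi(P)$ is \emph{proper} because $\Phi(P)=D$ would force $\seql{a}\subseteq P$ for every $a\in G^+$, hence $P\supseteq G^+$ and $P=G$. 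Conversely, since each $\seql{a}$ is a compact element of the lattice of all $\ell$-ideals and~$\fp$ is up-directed and downward closed, $\Psi(\fp)$ is a directed union of $\ell$-ideals, hence an $\ell$-ideal; the same compactness lets one transfer primeness and properness of~$\fp$ back to~$\Psi(\fp)$. The relations $J\subseteq\Psi(\fp)\iff J\in\fp$ (for $J\in D$, again by compactness) and $P=\bigcup\setm{\seql{a}}{a\in P\cap G^+}$ then give $\Phi\circ\Psi=\id$ and $\Psi\circ\Phi=\id$.

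Finally I would match the topologies: for any $\ell$-ideal~$I$ of~$G$ one has $I\subseteq P\iff\setm{J\in D}{J\subseteq I}\subseteq\Phi(P)$, so~$\Phi$ carries the basic closed set $\setm{P}{I\subseteq P}$ onto a closed set of $\Spec{D}$, and symmetrically for $\Phi^{-1}$; hence~$\Phi$ is a homeomorphism. Transporting the Stone isomorphism $\ga_D$ along~$\Phi$, and using $J\notin\Phi(P)\iff J\nsubseteq P$, identifies the resulting isomorphism $D\to\cKo(\Specl{G})$ with the stated assignment $J\mapsto\setm{P\in\Specl{G}}{J\nsubseteq P}$, while generalized spectrality of $\Specl{G}$ is inherited from that of $\Spec{D}$. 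The main obstacle is the bookkeeping in the correspondence $\Phi,\Psi$---in particular the properness and primeness checks in the direction $\Spec{D}\to\Specl{G}$, which genuinely use compactness of the $\seql{a}$ and the fact that, as~$G$ need not possess an order-unit, $D$ need not have a top element, so one must argue with the cofinal family $\setm{\seql{a}}{a\in G^+}$ rather than with a largest element.
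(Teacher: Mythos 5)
Your proof is correct, and it is the standard detailed verification of what the paper treats as well known: the paper offers no proof beyond the preceding remark that every $\ell$-ideal is the intersection of the prime $\ell$-ideals containing it, which is exactly the fact your bijection $\Phi,\Psi$ (prime $\ell$-ideals of~$G$ versus prime lattice ideals of~$\Idlc{G}$) makes precise before transporting Theorem~\ref{T:Stone}. The compactness of the principal $\ell$-ideals $\seql{a}$, which you correctly identify as the key point in the direction $\Spec{\Idlc{G}}\to\Specl{G}$, is indeed what the omitted argument rests on.
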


Following terminology from Iberkleid, Mart{\'{\i}}nez, and McGovern \cite{IMM2011} and Weh\-rung~\cite{MV1}, we recall the following definition.

\begin{definition}\label{D:closedmap}
For distributive lattices~$A$ and~$B$, a map $f\colon A\to B$ is \emph{closed} if for all $a_0,a_1\in A$ and all $b\in B$, if $f(a_0)\leq f(a_1)\vee b$, then there exists $x\in A$ such that $a_0\leq a_1\vee x$ and $f(x)\leq b$.
\end{definition}

The following lemma is established in Wehrung~\cite[\S~3]{MV1}.

\begin{lemma}\label{L:Idcf}
Let~$A$ and~$B$ be Abelian \lgrp{s} and let $f\colon A\to B$ be an \lhom.
Then the map $\Idlc{f}\colon\Idlc{A}\to\Idlc{B}$, $\seql{x}\mapsto\seql{f(x)}$ is a closed $0$-lattice homomorphism.
\end{lemma}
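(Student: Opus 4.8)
The plan is to separate the easy ``$0$-lattice homomorphism'' part from the genuinely substantive ``closed'' part, the latter being the only place where something must be constructed. Throughout I would work with the description $\Idlc{G}=\setm{\seql{a}}{a\in G^+}$ and the elementary containment criterion: for $a,b\in G^+$ one has $\seql{a}\subseteq\seql{b}$ if{f} $a\leq nb$ for some positive integer~$n$ (since $\seql{b}=\setm{y\in G}{|y|\leq nb\text{ for some }n}$). From this criterion, well-definedness and monotonicity of $\Idlc{f}$ are immediate: if $\seql{a}=\seql{b}$ with $a,b\in A^+$, then $a\leq nb$ and $b\leq ma$ for suitable $m,n$, and applying the order-preserving additive map~$f$ gives $f(a)\leq nf(b)$ and $f(b)\leq mf(a)$, whence $\seql{f(a)}=\seql{f(b)}$.

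It preserves~$0$ because $\seql{0}\mapsto\seql{f(0)}=\seql{0}$, and it preserves joins and meets by combining the formulas~\eqref{Eq:jjmmseql} with the additivity and meet-preservation of~$f$:
\begin{align*}
\Idlc{f}(\seql{a}\vee\seql{b})&=\seql{f(a+b)}=\seql{f(a)}\vee\seql{f(b)},\\
\Idlc{f}(\seql{a}\cap\seql{b})&=\seql{f(a\wedge b)}=\seql{f(a)}\cap\seql{f(b)}.
\end{align*}
Hence $\Idlc{f}$ is a $0$-lattice homomorphism.

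For closedness (Definition~\ref{D:closedmap}), I would fix $\seql{u},\seql{v}\in\Idlc{A}$ (with $u,v\in A^+$) and $\seql{w}\in\Idlc{B}$ (with $w\in B^+$), and assume $\Idlc{f}(\seql{u})\subseteq\Idlc{f}(\seql{v})\vee\seql{w}$. By~\eqref{Eq:jjmmseql} this reads $\seql{f(u)}\subseteq\seql{f(v)+w}$, so the containment criterion yields a positive integer~$n$ with $f(u)\leq n\pI{f(v)+w}=nf(v)+nw$. The key step is to propose the witness $x\eqdef(u-nv)^+=u\sd nv\in A^+$. First, $(u-nv)^+\geq u-nv$ gives $u\leq nv+x\leq n(v+x)$, so $u\in\seql{v+x}$ and therefore $\seql{u}\subseteq\seql{v+x}=\seql{v}\vee\seql{x}$. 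Second, since an \lhom\ commutes with the operation $y\mapsto y^+=y\vee0$, we have $f(x)=f\pI{(u-nv)^+}=(f(u)-nf(v))^+$; combined with $f(u)-nf(v)\leq nw$ and $0\leq nw$, this gives $f(x)\leq nw$, i.e.\ $\Idlc{f}(\seql{x})=\seql{f(x)}\subseteq\seql{w}$. Thus $\seql{x}$ witnesses the defining condition, and $\Idlc{f}$ is closed.

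The only creative ingredient is guessing the witness $x=(u-nv)^+$, which ``peels off'' from~$u$ the part lying above~$nv$; once it is written down, everything reduces to monotonicity of~$f$ and the single structural fact that an \lhom\ preserves $(\cdot)^+$. I therefore expect no real obstacle beyond this choice — the remaining work is just the translation back and forth between containments in the lattice $\Idlc{\,\cdot\,}$ and inequalities in the underlying \lgrp{s}, carried out via the containment criterion and~\eqref{Eq:jjmmseql}.
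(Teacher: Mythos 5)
Your proof is correct, and since the paper itself does not prove this lemma but only cites Wehrung~\cite[\S~3]{MV1} for it, there is nothing in the text to diverge from; your argument --- reducing everything to the criterion $\seql{a}\subseteq\seql{b}\Leftrightarrow a\leq nb$ and producing the witness $x=u\sd nv=(u-nv)^+$ for closedness --- is exactly the standard one used in that reference. All steps check out: the identities~\eqref{Eq:jjmmseql} give the $0$-lattice homomorphism part, and the facts that an \lhom\ preserves $(\cdot)^+$ and that $f(u)-nf(v)\leq nw$ together yield $f(x)\leq nw$, as you say.
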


In particular, the assignments $G\mapsto\Idlc{G}$, $f\mapsto\Idlc{f}$ define a functor, from the category of all Abelian \lgrp{s} with \lhom{s}, to the category of all distributive lattices with zero with closed $0$-lattice homomorphisms.
It is well known that this functor preserves nonempty finite direct products and directed colimits.

Say that a lattice~$D$ is \emph{\lrep} if it is isomorphic to~$\Idlc{G}$ for some Abelian \lgrp~$G$.
Equivalently, the spectrum of~$D$ is homeomorphic to the $\ell$-spectrum of some Abelian \lgrp.
This terminology is extended to diagrams~$\vec{D}$ of distributive lattices with zero and $0$-lattice homomorphisms, by saying that $\vec{D}\cong\Idlc\vec{G}$ for some diagram~$\vec{G}$ of Abelian \lgrp{s} and \lhom{s}.

It is well known that every \lrep\ lattice is completely normal.
The author established the following result in~\cite{MV1}.

\begin{theorem}\label{T:MV1}
Every countable completely normal distributive lattice with zero is \lrep.
On the other hand, the class of all \lrep\ lattices cannot be defined by a class of~$\scL_{\infty,\go}$-formulas of lattice theory.
\end{theorem}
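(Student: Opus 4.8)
The plan is to route the problem through an auxiliary difference operation. Reading the identities of Lemma~\ref{L:Gabcsd} inside $\Idlc{G}$ through the dictionary $\seql{a}\vee\seql{b}=\seql{a+b}$ and $\seql{a}\wedge\seql{b}=\seql{a\wedge b}$, one sees that the operation $\seql{a}\sd\seql{b}\eqdef\seql{a\sd b}$ satisfies orthogonality $(\seql{a}\sd\seql{b})\wedge(\seql{b}\sd\seql{a})=0$, the triangle law $\seql{a}\sd\seql{c}\leq(\seql{a}\sd\seql{b})\vee(\seql{b}\sd\seql{c})$, the bound $\seql{a}\sd\seql{b}\leq\seql{a}$, and $\seql{a}\leq\seql{b}\vee(\seql{a}\sd\seql{b})$ (the last because $b+(a\sd b)=a\vee b$). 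Call a distributive lattice with zero \emph{Cevian} if it carries a binary operation $\sd$ obeying these four laws. Then $(a\sd b,\,b\sd a)$ is a splitting of $(a,b)$, so every Cevian lattice is completely normal, and by the above every \lrep\ lattice is Cevian. I would therefore prove the first assertion through two implications: a countable completely normal lattice admits a Cevian operation, and every Cevian lattice is \lrep.

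For the first implication I would fix an enumeration $(a_n,b_n)_{n<\go}$ of all ordered pairs of the (countable) lattice and define the values $c_n=a_n\sd b_n$ recursively. A splitting of $(a_n,b_n)$, furnished by complete normality, gives a first candidate; the real task is to make all the $c_n$ cohere, i.e. to guarantee the triangle law across every triple at once. At stage $n$ only finitely many earlier values constrain $c_n$, so one closes off under finitely many inequalities inside the finite distributive sublattice generated so far, adjusting the candidate to satisfy them simultaneously. Countability is used precisely to reduce the global coherence requirement to a countable sequence of finite ones; establishing that such adjustments never conflict is the first delicate point.

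The second implication, \emph{Cevian $\Rightarrow$ \lrep}, is the technical heart and the step I expect to be hardest. Given a Cevian lattice $(D,\sd)$ I would construct an Abelian \lgrp~$G$ together with a surjective $(\vee,0)$-homomorphism from $G^+$ onto $D$ inducing an isomorphism $\Idlc{G}\cong D$. The natural construction presents $G$ by generators $\set{e_a\mid a\in D}$ and relations forcing $\seql{e_a}=a$, $e_{a\vee b}=e_a+e_b$, and the orthogonal splitting prescribed by $\sd$, namely $e_a=(e_a\wedge e_b)+e_{a\sd b}$ with $e_{a\sd b}\wedge e_{b\sd a}=0$. The four Cevian laws are exactly what is needed to verify that these relations are consistent, that the partially ordered group they define is in fact lattice-ordered, and that the comparison map $\seql{e_a}\mapsto a$ collapses nothing, so that $\Idlc{G}\cong D$. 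Checking that $G$ is genuinely an \lgrp\ and that $\Idlc{G}$ is no smaller than $D$ is where the bulk of the work lies.

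For the non-definability assertion I would use Karp's theorem, by which two lattices satisfy the same $\scL_{\infty,\go}$-sentences exactly when they are partially isomorphic, i.e. linked by a back-and-forth system of finite partial isomorphisms. Since complete normality is first-order and countable completely normal lattices are \lrep\ by the first part, it suffices to exhibit a completely normal lattice $B$ that is \emph{not} \lrep\ yet is partially isomorphic to some \lrep\ lattice $A$: for then $A\equiv_{\infty,\go}B$ forces $B$ to satisfy any putative defining family, a contradiction. For $B$ I would take a completely normal lattice carrying no Cevian operation---such exist in cardinality $\aleph_2$, the obstruction being a genuinely uncountable coherence failure---so that $B$ is not \lrep\ because \lrep\ lattices are Cevian. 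The difficulty, and the second main obstacle, is to manufacture the \lrep\ companion $A$ out of the same finite local data as $B$ (for instance as a colimit of its finitely generated sublattices, re-amalgamated so that the uncountable obstruction dissolves) together with a back-and-forth system: partial isomorphism only transports finite configurations, whereas non-Cevianness lives on the uncountable scale, and the whole point is to place that scale-sensitive defect in $B$ while keeping $A$ representable.
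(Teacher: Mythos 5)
First, note that the paper does not prove Theorem~\ref{T:MV1} at all: it is quoted verbatim from the reference \cite{MV1}, so there is no in-paper argument to compare yours against. Judged on its own terms, your text is a strategy outline rather than a proof, and both of its load-bearing steps are explicitly deferred. For the first assertion, the reduction ``\lrep\ $\Rightarrow$ Cevian'' is sound (it is essentially Lemma~\ref{L:RefI} with the witnesses $\seql{a\sd b}$), but the two implications you actually need are each left open at their critical point. The recursive construction of a coherent difference operation on a countable completely normal lattice hinges on an extension lemma --- given $\sd$ on a finite sublattice, extend it to a larger finite sublattice without violating the triangle law against \emph{all} previously fixed values --- and your ``establishing that such adjustments never conflict is the first delicate point'' is precisely the whole difficulty; nothing in the sketch shows the finitely many inequalities at stage $n$ are simultaneously satisfiable. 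Worse, the implication ``Cevian $\Rightarrow$ \lrep'' is not known to hold for arbitrary distributive lattices, and even in the countable case the generators-and-relations presentation you propose does not come with any argument that the resulting partially ordered group is lattice-ordered, that the relations do not collapse it, or that $\Idlc{G}$ is not strictly smaller than $D$; you concede this is ``where the bulk of the work lies.'' So the heart of the first statement is asserted, not proved.

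For the non-definability assertion, the Karp/back-and-forth frame is the right one, but your choice of witness creates a gap you do not close. You take for $B$ a completely normal non-Cevian lattice of cardinality $\aleph_2$ and then must \emph{manufacture} an \lrep\ lattice $A$ with $A\equiv_{\infty,\go}B$; the proposed ``re-amalgamation of the finite local data'' is not a construction, and it is far from clear that such an $A$ exists for that particular $B$ (partial isomorphism preserves all $\scL_{\infty,\go}$-consequences of the finite configurations, and nothing you say rules out that non-Cevianness of your $B$ is already forced by them). The machinery of the present paper points to a much cheaper witness that makes the whole issue evaporate: take a non-closed $0$-lattice homomorphism $\gf$ between finite chains and compare the condensates $\Cond(\gf,\go)$ and $\Cond(\gf,\go_1)$ (the lattices of almost constant sequences). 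The countable one is completely normal, hence \lrep\ by the first part; the uncountable one is not \lrep\ by Lemma~\ref{L:ellLift} (cf.\ Remark~\ref{Rk:RCRnotlrep}); and the two are trivially linked by a back-and-forth system of finite partial isomorphisms. Your outline misses this and instead routes through an unconstructed object, so the second assertion is also not established.
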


The following easy lemma is established in Wehrung~\cite[\S~3]{MV1}.

\begin{lemma}\label{L:FactClosed}
Let~$G$ be an Abelian \lgrp, let~$S$ be a distributive lattice with zero, and let $\gf\colon\Idlc G\twoheadrightarrow S$ be a closed surjective $0$-lattice homomorphism.
Then $I\eqdef\setm{x\in G}{\gf(\seql{x})=0}$ is an $\ell$-ide\-al of~$G$, and there is a unique isomorphism $\psi\colon\Idlc(G/I)\to S$ such that $\psi(\seql{x+I})=\gf(\seql{x})$ for every $x\in G^+$.
\end{lemma}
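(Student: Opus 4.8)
The plan is to show first that $I$ is an $\ell$-ideal, and then to recognize $S$ as $\Idlc(G/I)$ by proving that $\gf$ and the canonical map $\Idlc G\to\Idlc(G/I)$ induce one and the same lattice congruence on $\Idlc G$. For the first part I would use the standard characterization of $\ell$-ideals as the \emph{solid} subgroups, that is, the subgroups $J$ with the property that $|y|\leq|x|$ and $x\in J$ force $y\in J$. Both requirements reduce to the generation formulas~\eqref{Eq:jjmmseql}, the identity $\seql x=\seql{|x|}$, and the monotonicity of the $0$-lattice homomorphism~$\gf$: if $x,y\in I$ then $\seql{x-y}\subseteq\seql{|x|}\vee\seql{|y|}=\seql{|x|+|y|}$, whence $\gf(\seql{x-y})\leq\gf(\seql x)\vee\gf(\seql y)=0$; and if $|y|\leq|x|$ with $x\in I$ then $\seql y\subseteq\seql x$, so $\gf(\seql y)\leq\gf(\seql x)=0$. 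As $0\in I$, this exhibits $I$ as a solid subgroup, hence an $\ell$-ideal.

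Next, writing $\pi\colon G\to G/I$ for the quotient \lhom, Lemma~\ref{L:Idcf} shows that $\bar\pi\eqdef\Idlc\pi\colon\seql x\mapsto\seql{x+I}$ is a closed $0$-lattice homomorphism, and it is surjective because $\pi$ carries $G^+$ onto $(G/I)^+$. The heart of the argument is a general remark: for \emph{any} closed surjective $0$-lattice homomorphism $f$, the whole congruence $\ker f$ is already determined by the single ideal $f^{-1}\set{0}$. Indeed, if $f(a)\leq f(c)$ then $f(a)\leq f(c)\vee 0$, so closedness (applied with $b=0$) produces $n$ with $a\leq c\vee n$ and $f(n)=0$; conversely $a\leq c\vee n$ with $f(n)=0$ forces $f(a)\leq f(c)$ by monotonicity. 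Hence $f(a)\leq f(c)$ if{f} $a\leq c\vee n$ for some $n\in f^{-1}\set{0}$.

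Finally I would apply this remark to both $\gf$ and $\bar\pi$. Since $\gf(\seql x)=0$ if{f} $x\in I$ if{f} $\bar\pi(\seql x)=0$, the two maps share the same kernel ideal, $\gf^{-1}\set{0}=\bar\pi^{-1}\set{0}$; by the preceding characterization they then induce the very same preorder, hence the same congruence, on $\Idlc G$. Consequently $\bar\pi(\seql x)\mapsto\gf(\seql x)$ is a well-defined $0$-lattice isomorphism $\psi\colon\Idlc(G/I)\to S$ satisfying $\psi(\seql{x+I})=\gf(\seql x)$, and it is unique because $\bar\pi$ is surjective. The one genuinely load-bearing step is the congruence-recovery remark, which is exactly where the closedness hypothesis is needed; the $\ell$-ideal verification and the matching of kernels are routine.
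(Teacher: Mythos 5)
Your proof is correct. The paper does not actually prove this lemma here (it is quoted from \cite[\S~3]{MV1}), but your argument --- verifying that $I$ is a solid subgroup, and then observing that closedness lets one recover the whole kernel congruence of a closed $0$-lattice homomorphism from its kernel ideal $\gf^{-1}\set{0}$, so that $\gf$ and $\seql{x}\mapsto\seql{x+I}$ induce the same congruence on $\Idlc{G}$ --- is sound and is exactly the standard route; the closedness hypothesis is used precisely where you say it is.
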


\subsection{Brumfiel spectrum and \brep\ lattices}\label{Su:Bspec}
For any commutative \fring~$A$, we say that a (proper) $\ell$-ideal~$P$ is \emph{prime} if it is both an $\ell$-ideal and prime as a ring ideal.
Then~$P$ is also prime as an $\ell$-ideal of the underlying additive \lgrp\ of~$A$, that is, $x\wedge y\in P$ implies that either $x\in P$ or $y\in P$, whenever $x,y\in A$.

The \emph{Brumfiel spectrum} of a commutative \fring~$A$ is defined as the set~$\Specb{A}$ of all prime $\ell$-ideals of~$A$, endowed with the topology whose closed sets are exactly the $\setm{P\in\Specb{A}}{I\subseteq P}$, for subsets (equivalently, radical $\ell$-ideals)~$I$ of~$A$.

Denote by $\seqr{a_1,\dots,a_m}$, or $\seqr{a_1,\dots,a_m}_A$ if~$A$ needs to be specified, the radical $\ell$-ideal of~$A$ generated by elements $a_1$, \dots, $a_m$ of~$A$, and denote by~$\Idrc{A}$ the set of all ideals of~$A$ of the form $\seqr{a_1,\dots,a_m}$ (\emph{finitely generated radical ideals}),%
\footnote{Although we are using, for radical $\ell$-ideals, the same notation as the one in Subsection~\ref{Su:Zspec} for radical ideals, the context will always make it clear which concept is used.}
ordered by set inclusion.
Since $\seqr{a_1,\dots,a_m}=\seql{a}$ where $a\eqdef\sum_{i=1}^m|a_i|$, we get $\Idrc{A}=\setm{\seqr{a}}{a\in A^+}$.
Due to the formulas
 \begin{equation}\label{Eq:jjmmseqr}
 \seqr{a}\vee\seqr{b}=\seqr{|a|+|b|}\text{ and }
 \seqr{a}\cap\seqr{b}=\seqr{|a|\wedge|b|}=\seqr{ab}\,,\quad
 \text{for all }a,b\in A
 \end{equation}
(where~$\vee$ stands for the join in the lattice of all radical $\ell$-ideals of~$A$),
$\Idrc{A}$ is a $0$-sublattice of the distributive lattice of all radical $\ell$-ideals of~$A$.
If~$A$ is unital, then~$\Idrc{A}$ has a top element.

Since every radical $\ell$-ideal of~$A$ is the intersection of all prime $\ell$-ideals containing it, $\Idrc{A}$ is the Stone dual of~$\Specb{A}$ (cf. Delzell and Madden \cite[Proposition~4.2]{DelMad1995}):

\begin{proposition}\label{P:IdrcABr}
The Brumfiel spectrum~$\Specb{A}$, of a commutative \fring~$A$, is a generalized spectral space, and the assignment $I\mapsto\setm{P\in\Specb{A}}{I\not\subseteq P}$ defines an isomorphism from~$\Idrc{A}$ onto the Stone dual~$\cKo(\Specb{A})$ of~$\Specb{A}$.
\end{proposition}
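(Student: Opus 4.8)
The plan is to reduce everything to Stone's Theorem~\ref{T:Stone} applied to the distributive lattice $\Idrc{A}$ with zero, by exhibiting a homeomorphism between $\Specb{A}$ and the Stone spectrum $\Spec{\Idrc{A}}$. Since $\Idrc{A}$ has already been shown to be a distributive lattice with zero, Theorem~\ref{T:Stone} immediately gives that $\Spec{\Idrc{A}}$ is generalized spectral and that $\ga_{\Idrc{A}}\colon\Idrc{A}\to\cKo(\Spec{\Idrc{A}})$, $J\mapsto\setm{\fp}{J\notin\fp}$, is an isomorphism. It then suffices to transport this isomorphism along the homeomorphism and to check that the transported map is the assignment $I\mapsto\setm{P\in\Specb{A}}{I\not\subseteq P}$ of the statement.

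First I would define $\Phi\colon\Specb{A}\to\Spec{\Idrc{A}}$ by $\Phi(P)\eqdef\setm{J\in\Idrc{A}}{J\subseteq P}=\setm{\seqr{a}}{a\in A^+\cap P}$, the last equality using that $P$ is a radical $\ell$-ideal and that every member of $\Idrc{A}$ has the form $\seqr{a}$ with $a\in A^+$. That $\Phi(P)$ is a proper lattice ideal is routine: downward closure and closure under finite joins follow from $P$ being a radical $\ell$-ideal, and properness from $P\neq A$. Its primality is the first place where the \fring\ structure enters: by the meet formula in~\eqref{Eq:jjmmseqr}, $\seqr{a}\cap\seqr{b}=\seqr{ab}$, so $\seqr{a}\cap\seqr{b}\in\Phi(P)$ means $ab\in P$, whence $a\in P$ or $b\in P$ since $P$ is prime as a ring ideal. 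Injectivity of $\Phi$ follows from the fact that an $\ell$-ideal is determined by its positive part, since $x\in P$ if{f} $|x|\in P$.

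For surjectivity, given a prime ideal $\fp$ of $\Idrc{A}$, I would set $P\eqdef\bigcup\fp$, the union of all radical $\ell$-ideals lying in $\fp$. As $\fp$ is upward directed (being closed under finite joins) and downward closed, this union is a radical $\ell$-ideal, and it is proper because $\fp\neq\Idrc{A}$. The crucial point is again the identity $\seqr{ab}=\seqr{a}\cap\seqr{b}$: if $ab\in P$ then $\seqr{a}\cap\seqr{b}\in\fp$, so primality of $\fp$ forces $\seqr{a}\in\fp$ or $\seqr{b}\in\fp$, i.e.\ $a\in P$ or $b\in P$; thus $P$ is prime as a ring ideal, hence a prime $\ell$-ideal, and $\Phi(P)=\fp$. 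This surjectivity step — turning the lattice-theoretic meet back into the ring product in order to recover ring-primality — is the main obstacle; everything else is bookkeeping.

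Finally I would check that $\Phi$ is a homeomorphism by matching the defining closed sets: for a radical $\ell$-ideal $I$, the map $\Phi$ carries $\setm{P}{I\subseteq P}$ bijectively onto $\setm{\fp}{\fp\supseteq\setm{J\in\Idrc{A}}{J\subseteq I}}$, using that $I$ is the union of the finitely generated radical $\ell$-ideals below it; conversely every ideal of $\Idrc{A}$ arises as $\setm{J\in\Idrc{A}}{J\subseteq I}$ by taking $I$ to be its union. Since the specialization orders on both sides are inclusion and $\Phi$ both preserves and reflects it, this identifies the two families of closed sets and makes $\Phi$ a homeomorphism. (The sentence preceding the statement, that every radical $\ell$-ideal is the intersection of the prime $\ell$-ideals above it, is what guarantees that distinct radical $\ell$-ideals yield distinct closed sets, so no information is lost in this identification.) Transporting $\ga_{\Idrc{A}}$ along $\Phi$ — explicitly, composing it with the isomorphism $\cKo(\Spec{\Idrc{A}})\to\cKo(\Specb{A})$, $V\mapsto\Phi^{-1}[V]$ induced by the homeomorphism — then yields exactly the assignment $I\mapsto\setm{P\in\Specb{A}}{I\not\subseteq P}$ as an isomorphism $\Idrc{A}\to\cKo(\Specb{A})$, as desired.
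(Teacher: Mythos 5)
Your proof is correct, and it fleshes out in full what the paper merely asserts with a one-sentence justification and a citation to Delzell and Madden: the identification of $\Specb{A}$ with $\Spec(\Idrc{A})$ via $P\mapsto\setm{J\in\Idrc{A}}{J\subseteq P}$, with the formula $\seqr{a}\cap\seqr{b}=\seqr{ab}$ from~\eqref{Eq:jjmmseqr} converting lattice primality into ring primality in both directions, is exactly the standard argument the paper has in mind. No gaps; the only remark worth making is that your appeal to ``every radical $\ell$-ideal is the intersection of the prime $\ell$-ideals above it'' is not actually needed for the homeomorphism (the two families of closed sets need only coincide as families, not be faithfully parametrized), though it is of course the reason the paper states that sentence as its entire justification.
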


Say that a lattice~$D$ is \emph{\brep} if it is isomorphic to~$\Idrc{A}$ for some commutative \fring~$A$.
Equivalently, the spectrum of~$D$ is homeomorphic to the Brumfiel spectrum of some commutative \fring.
As in Subsection~\ref{Su:lspec}, this terminology is extended to diagrams of lattices, in a standard fashion.

It is well known that every \brep\ lattice is completely normal.
We will see, with Corollary~\ref{C:NonrReprlGrp} in the present paper, that not every \lrep\ lattice (thus, \emph{a fortiori}, not every completely normal distributive lattice) is \brep.

The following result is an analogue of Lemma~\ref{L:FactClosed} for \fring{s}.
Its proof is similar and we omit it.

\begin{lemma}\label{L:FactClosedRng}
Let~$A$ be a commutative \fring, let~$S$ be a distributive lattice with zero, and let $\gf\colon\Idrc A\twoheadrightarrow S$ be a closed surjective $0$-lattice homomorphism.
Then $I\eqdef\setm{x\in A}{\gf(\seqr{x})=0}$ is a radical $\ell$-ide\-al of~$A$, and there is a unique isomorphism $\psi\colon\Idrc(A/I)\to S$ such that $\psi(\seqr{x+I})=\gf(\seqr{x})$ for every $x\in A^+$.
\end{lemma}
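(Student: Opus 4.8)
The plan is to follow the proof of Lemma~\ref{L:FactClosed} verbatim in spirit, replacing $\ell$-ideals by radical $\ell$-ideals, the finitely generated $\ell$-ideals $\seql{x}$ by the finitely generated radical $\ell$-ideals $\seqr{x}$, and the formulas~\eqref{Eq:jjmmseql} by those of~\eqref{Eq:jjmmseqr}. The first task is to check that $I$ is a radical $\ell$-ideal of~$A$. Since $\seqr{x}=\seqr{|x|}$, membership of $x$ in $I$ depends only on~$|x|$. Closure under addition follows from $|x+y|\leq|x|+|y|$, giving $\seqr{x+y}\subseteq\seqr{|x|}\vee\seqr{|y|}=\seqr{|x|+|y|}$ by~\eqref{Eq:jjmmseqr}, whence $\gf(\seqr{x+y})\leq\gf(\seqr{x})\vee\gf(\seqr{y})=0$; closure under negation is immediate. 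The solidity condition ($|w|\leq|x|$ and $x\in I$ imply $w\in I$), which yields order-convexity and the sublattice property, follows from monotonicity of~$\gf$ together with $\seqr{w}\subseteq\seqr{x}$. Closure under ring multiplication uses the meet formula $\seqr{xa}=\seqr{x}\cap\seqr{a}$ of~\eqref{Eq:jjmmseqr}, so that $\gf(\seqr{xa})=\gf(\seqr{x})\wedge\gf(\seqr{a})=0$; and $\seqr{x^{2}}=\seqr{x}$ shows $I$ is radical.

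Next, $A/I$ is again a commutative \fring, and the canonical projection $\pi\colon A\twoheadrightarrow A/I$ induces a surjective $0$-lattice homomorphism $\seqr{x}\mapsto\seqr{x+I}$ from $\Idrc A$ onto $\Idrc(A/I)$. The crucial bookkeeping step is to translate containments across~$\pi$. Writing $I=\bigcup\setm{\seqr{z}}{z\in I,\ z\geq0}$ as a \emph{directed} union (directedness coming from $\seqr{z_1}\vee\seqr{z_2}=\seqr{z_1+z_2}$) and distributing a finite join over it, one obtains $\seqr{y}\vee I=\bigcup\setm{\seqr{y+z}}{z\in I,\ z\geq0}$ for $y\in A^{+}$. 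Combined with the correspondence theorem for radical $\ell$-ideals of $A/I$, this gives, for $x,y\in A^{+}$, the equivalences: $\seqr{x+I}\subseteq\seqr{y+I}$ in $A/I$ if and only if $\seqr{x}\subseteq\seqr{y}\vee I$, if and only if $\seqr{x}\subseteq\seqr{y+z}$ for some $z\in I$ with $z\geq0$.

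I would then set $\psi(\seqr{x+I})\eqdef\gf(\seqr{x})$ and prove that $\psi$ is an order-isomorphism, hence a lattice isomorphism, by establishing the equivalence $\seqr{x+I}\subseteq\seqr{y+I}\Leftrightarrow\gf(\seqr{x})\leq\gf(\seqr{y})$ for $x,y\in A^{+}$. The forward implication, which also supplies well-definedness of~$\psi$, uses the previous paragraph: from $\seqr{x}\subseteq\seqr{y+z}$ with $z\in I$, $z\geq0$, one gets $\gf(\seqr{x})\leq\gf(\seqr{y+z})=\gf(\seqr{y})\vee\gf(\seqr{z})=\gf(\seqr{y})$. The reverse implication is where the hypothesis that~$\gf$ is \emph{closed} (Definition~\ref{D:closedmap}) is used: applying closedness to $\gf(\seqr{x})\leq\gf(\seqr{y})\vee0$ produces $z\in A^{+}$ with $\seqr{x}\subseteq\seqr{y}\vee\seqr{z}=\seqr{y+z}$ and $\gf(\seqr{z})\leq0$, i.e.\ $z\in I$, whence $\seqr{x+I}\subseteq\seqr{y+I}$. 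Surjectivity of~$\psi$ follows from surjectivity of~$\gf$, and uniqueness follows from surjectivity of the map $\seqr{x}\mapsto\seqr{x+I}$.

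The step I expect to require the most care is the correspondence in the second paragraph: the identity $\seqr{y}\vee I=\bigcup\setm{\seqr{y+z}}{z\in I,\ z\geq0}$ rests on the frame-theoretic fact that finite joins distribute over the directed union representing~$I$, and on the quotient correspondence for radical $\ell$-ideals of \fring{s}. Once that translation is in hand, the two implications of the order-equivalence are short, with closedness of~$\gf$ doing the essential work in exactly one of them, precisely as in Lemma~\ref{L:FactClosed}.
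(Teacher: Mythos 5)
Your proof is correct and is precisely the argument the paper intends: the paper omits the proof of Lemma~\ref{L:FactClosedRng}, stating only that it is the \emph{mutatis mutandis} adaptation of Lemma~\ref{L:FactClosed} (itself cited from~\cite{MV1}), which is exactly the translation you carry out. All the key points --- the directed-union descriptions of $I$ and of $\seqr{y}\vee I$, the use of~\eqref{Eq:jjmmseqr} for the ring-multiplication and radical properties of~$I$, and the single application of closedness in the reverse implication of the order-equivalence --- are in order.
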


\begin{definition}\label{D:SpecConvex}
Let~$A$ and~$B$ be distributive lattices with zero.
A $0$-lattice homomorphism $f\colon A\to B$ is \emph{convex} if for all $P\in\Spec{A}$, all $Q_0\in\Spec{B}$, and every proper ideal~$J$ of~$B$, if $Q_0\subseteq J$ and $f^{-1}[Q_0]\subseteq P\subseteq f^{-1}[J]$, then there exists $Q\in\Spec{B}$ such that $Q_0\subseteq Q\subseteq J$ and $P=f^{-1}[Q]$.
\end{definition}

The following result extends to the Brumfiel spectrum functor a result originally established for the real spectrum functor in Korollar~4, pages 133--134 of Knebusch and Scheiderer~\cite{KneSch1989}.
Our proof is a straightforward modification of its analogue for real spectra, stated in the forthcoming monograph Dickmann, Schwartz, and Tressl \cite[Theorem~12.3.12]{DST1}.

\begin{lemma}\label{L:BrumConvex}
Let~$A$ and~$B$ be commutative \fring{s} and let $f\colon A\to B$ be a homomorphism of \fring{s}.
Then the map $\Idrc{f}\colon\Idrc{A}\to\Idrc{B}$ is convex.
\end{lemma}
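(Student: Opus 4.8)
The plan is to dualize and then work entirely inside the \fring{s}. Write $g\eqdef\Idrc f$. By Stone duality (Theorem~\ref{T:StoneDual}, Remark~\ref{Rk:StoneDual}) together with Proposition~\ref{P:IdrcABr}, the prime ideals of the lattice $\Idrc A$ are exactly the sets $\widehat{\fp}\eqdef\setm{\seqr{x}_A}{x\in\fp}$ for $\fp$ a prime $\ell$-ideal of $A$, the proper ideals of $\Idrc B$ are exactly the sets $\setm{\seqr{y}_B}{y\in\mathfrak{j}}$ for $\mathfrak{j}$ a proper radical $\ell$-ideal of $B$, and $g^{-1}$ on these objects is computed by $f^{-1}$ on the corresponding $\ell$-ideals. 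Unravelling Definition~\ref{D:SpecConvex} through this correspondence, I must prove the following \fring\ statement: \emph{for every prime $\ell$-ideal $\fp$ of $A$, every prime $\ell$-ideal $\fq_0$ of $B$, and every proper radical $\ell$-ideal $\mathfrak{j}$ of $B$ such that $\fq_0\subseteq\mathfrak{j}$, $f^{-1}[\fq_0]\subseteq\fp$, and $\fp\subseteq f^{-1}[\mathfrak{j}]$, there is a prime $\ell$-ideal $\fq$ of $B$ with $\fq_0\subseteq\fq\subseteq\mathfrak{j}$ and $f^{-1}[\fq]=\fp$.} This is exactly the convexity of the map $\Specb B\to\Specb A$, $\fq\mapsto f^{-1}[\fq]$, and it is the \fring\ analogue of the real-spectrum statement in \cite[Theorem~12.3.12]{DST1}.

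The key device is to force the lower inclusion $\fp\subseteq f^{-1}[\fq]$ into the construction from the start, so that only the upper inclusion has to be controlled. Let $\fI\eqdef\seqr{\fq_0\cup f[\fp]}_B$ be the radical $\ell$-ideal of $B$ generated by $\fq_0$ and the image $f[\fp]$, and let $T\eqdef f[A^+\smallsetminus\fp]$. Since $\fp$ is a prime $\ell$-ideal and $f$ is an \fring\ homomorphism, $T$ is closed under multiplication and under $\wedge$, and $0\notin T$. Any prime $\ell$-ideal $\fq$ with $\fI\subseteq\fq$ satisfies $f^{-1}[\fq]\supseteq\fp$, while $\fq\cap T=\es$ forces $f^{-1}[\fq]\subseteq\fp$; hence \emph{any} prime $\fq$ with $\fI\subseteq\fq\subseteq\mathfrak{j}$ and $\fq\cap T=\es$ is automatically of the required form. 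Moreover $\fI\subseteq\mathfrak{j}$ already holds: $\fq_0\subseteq\mathfrak{j}$ by hypothesis, $f[\fp]\subseteq\mathfrak{j}$ because $\fp\subseteq f^{-1}[\mathfrak{j}]$, and $\mathfrak{j}$ is a radical $\ell$-ideal. So the proof reduces to the two points (a) $\fI\cap T=\es$ and (b) there exists a prime $\ell$-ideal $\fq$ with $\fI\subseteq\fq\subseteq\mathfrak{j}$ and $\fq\cap T=\es$.

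Both points are handled by passing to a totally ordered quotient and invoking Lemma~\ref{L:1notinP}. For any prime $\ell$-ideal $\fq'$ contained in $\mathfrak{j}$, the quotient $B/\fq'$ is a totally ordered commutative domain, in which the $\ell$-ideals form a chain under inclusion (two order-convex $\ell$-subgroups of a totally ordered group are always comparable); consequently every proper radical $\ell$-ideal of $B/\fq'$ is prime, and the image of $\mathfrak{j}$ is a proper order-convex ideal there. Now the crucial use of the hypothesis $\fp\subseteq f^{-1}[\mathfrak{j}]$ is this: modulo any prime $\subseteq\mathfrak{j}$, every element of $f[\fp]$ lands inside the proper order-convex ideal $\mathfrak{j}/\fq'$, so by Lemma~\ref{L:1notinP} it is \emph{multiplicatively infinitesimal}, that is, its product with an arbitrary element $\bar d$ of the quotient satisfies $\overline{f(p)}\,\bar d\ll\bar d$. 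This infinitesimality, combined with the chain structure and the fact that $T$ is multiplicatively and $\wedge$-closed, is what prevents the radical $\ell$-ideal generated by $f[\fp]$ (together with $\fq_0$, whose pullback already lies in $\fp$) from ever capturing an element $f(x)$ with $x\in A^+\smallsetminus\fp$; this yields (a). For (b) one takes a maximal $\ell$-ideal $\fq$ in the interval $[\fI,\mathfrak{j}]$ disjoint from $T$ (a Zorn argument, upper bounds of chains being unions), and checks that maximality together with the same infinitesimality forces $\fq$ to be prime, the \fring\ axiom $x\wedge y=0\Rightarrow x\wedge yz=0$ being used exactly as in the totally ordered reduction.

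The step I expect to be the main obstacle is (a), specifically controlling \emph{arbitrary} coefficients $\bar d\in B/\fq'$ in a domination $\overline{f(x)}\le\overline{f(p)}\,\bar d$: the embedding $A/f^{-1}[\fq']\hookrightarrow B/\fq'$ need not be surjective, so such a $\bar d$ cannot be pulled back to $A$, and a naive element chase stalls. The resolution is that the properness of the ceiling $\mathfrak{j}$ bounds how large $\bar d$ can be relative to $T$: were such a domination to exist, the multiplicative infinitesimality of $\overline{f(p)}$ supplied by Lemma~\ref{L:1notinP} would push $\overline{f(x)}$ below the proper convex ideal $\mathfrak{j}/\fq'$, while $x\notin\fp$ keeps $\overline{f(x)}$ comparable to $T$, contradicting $\fq'\cap T=\es$. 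Making this tension precise, by tracking the chain of convex $\ell$-ideals of $B/\fq'$ and the order-embedding from $A/f^{-1}[\fq']$, is the technical heart of the argument and is where the proof most closely follows, line by line, the real-spectrum proof of \cite[Theorem~12.3.12]{DST1}.
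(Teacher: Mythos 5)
Your setup is sound: the translation via Stone duality to a statement about prime $\ell$-ideals, the target of the reduction (find a prime $\ell$-ideal $\fq$ with $\fq_0\subseteq\fq\subseteq\mathfrak{j}$ and $f^{-1}[\fq]=\fp$), the observation that any prime containing $\fI$ and missing $T=f[A^+\smallsetminus\fp]$ automatically works, and the identification of Lemma~\ref{L:1notinP} as the key tool all match the paper. But the proof is not actually there: both of your points (a) and (b) are left unproven, and the sketch you give for them has real problems. For (a), showing that $f(x)$ with $x\in A^+\smallsetminus\fp$ lies outside the \emph{radical} $\ell$-ideal $\fI$ generated by $\fq_0\cup f[\fp]$ amounts, since radical $\ell$-ideals are intersections of primes, to exhibiting a prime $\ell$-ideal containing $\fI$ and avoiding $f(x)$ --- which is essentially statement (b); your order of deduction is circular unless you supply a genuinely element-wise description of $\fI$, which you do not. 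For (b), the Zorn argument ``maximal disjoint from a multiplicatively and $\wedge$-closed set is prime'' is broken by the ceiling constraint $\fq\subseteq\mathfrak{j}$: when $xy\in\fq$ with $x,y\notin\fq$, the enlargements $\fq\vee\seqr{x}$ and $\fq\vee\seqr{y}$ may fail to lie below $\mathfrak{j}$ rather than fail to avoid $T$, and you say nothing about that case. Deferring ``the technical heart'' to be copied ``line by line'' from an external source is precisely the part a proof must contain.

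The paper's proof avoids all of this by being completely explicit. It first reduces, by passing to $A/f^{-1}[Q_0]\hookrightarrow B/Q_0$, to the case where $A$ is an ordered subring of a totally ordered commutative domain $B$, where $Q_0=\set{0}$, $f$ is the inclusion map, and $P\subseteq J$; it then simply \emph{defines}
\[
Q\eqdef\setm{y\in J}{(\exists n<\go)(\exists x\in P)\pI{|y|^n\leq x}}
\]
and verifies directly --- using Lemma~\ref{L:1notinP} for the multiplicative absorption step $(yb)^{n+1}=y^n(yb^{n+1})\leq y^n$ --- that $Q$ is a prime $\ell$-ideal with $Q\subseteq J$ and $Q\cap A=P$. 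If you want to keep your separation-theoretic framing, the missing idea is exactly this concrete witness: produce $Q$ (or an equally explicit prime) rather than hoping that (a) and (b) can be settled abstractly.
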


\begin{proof}
We must prove that for every $P\in\Specb{A}$, all $Q_0\in\Specb{B}$, every proper radical $\ell$-ideal~$J$ of~$B$, if $Q_0\subseteq J$ and $f^{-1}[Q_0]\subseteq P\subseteq f^{-1}[J]$, then there exists $Q\in\Specb{B}$ such that $Q_0\subseteq Q\subseteq J$ and $P=f^{-1}[Q]$.
We may replace~$A$ by~$A/f^{-1}[Q_0]$, $B$ by~$B/Q_0$, $J$ by~$J/Q_0$, and~$f$ by the canonical embedding $A/f^{-1}[Q_0]\to B/Q_0$.
Hence, we may assume that~$A$ is an ordered subring of a totally ordered (not necessarily unital) commutative domain~$B$, $Q_0=\set{0}$, $f$ is the inclusion map from~$A$ into~$B$, $P\in\Specb{A}$, $J$ is a proper radical $\ell$-ideal of~$B$, and $P\subseteq J$.
We must find $Q\in\Specb{B}$ such that $P=Q\cap A$ and $Q\subseteq J$.
We set
 \[
 Q\eqdef\setm{y\in J}{(\exists n<\go)(\exists x\in P)\pI{|y|^n\leq x}}\,.
 \]
We claim that~$Q$ is a prime $\ell$-ideal of~$B$.
It is obvious that~$Q$ is an order-convex $\ell$-subgroup of~$B$.
Now let $y\in Q$ and $b\in B$.
We must prove that $yb\in Q$.
Since~$B$ is totally ordered, we may assume that $y,b\in B^+$.
Since~$J$ is an ideal of~$B$, $yb\in J$.
By assumption, there are $n<\go$ and $x\in P$ such that $y^n\leq x$.
It follows that $y^n\in J$, thus, since~$J$ is a radical ideal of~$B$, $y\in J$, and thus $yb^{n+1}\in J$.
Since~$J$ is a proper $\ell$-ideal of~$B$, it follows, using Lemma~\ref{L:1notinP}, that $(yb)^{n+1}=y^n(yb^{n+1})\leq y^n\leq x$, whence $yb\in Q$.
This completes the proof that~$Q$ is an $\ell$-ideal of~$B$.

Let $x,y\in B$ such that $xy\in Q$, we must prove that $x\in Q$ or $y\in Q$.
Since~$B$ is totally ordered, we may assume that $0\leq x\leq y$.
There are $n<\go$ and $p\in P$ such that $(xy)^n\leq p$.
It follows that $x^{2n}\leq(xy)^n\leq p$, whence $x\in Q$, thus completing the proof that~$Q$ is prime.

Now it is obvious that $Q\subseteq J$ and $P=Q\cap A$.
\end{proof}

\subsection{Real spectrum and \rrep\ lattices}\label{Su:Rspec}
Let~$A$ be a commutative unital ring.
A subset~$C$ of~$A$ is a \emph{cone} if it is both an additive and a multiplicative submonoid of~$A$, containing all squares in~$A$.
A cone~$P$ of~$A$ is \emph{prime} if $A=P\cup(-P)$ and the \emph{support} $P\cap(-P)$ is a prime ideal of~$A$.
For a prime cone~$P$, $-1\notin P$ (otherwise $1\in P\cap(-P)$, thus $P\cap(-P)=A$, a contradiction).
We denote by~$\Specr{A}$ the set of all prime cones of~$A$, endowed with the topology generated by all subsets of the form $\setm{P\in\Specr{A}}{a\notin P}$, for $a\in A$, and we call~$\Specr{A}$ the \emph{real spectrum of~$A$}.

It is not so straightforward to describe directly the Stone dual of~$\Specr{A}$.
However, it is possible to reduce it to the Brumfiel spectrum, as follows.
The \emph{universal \fring}~$\rF(A)$ of~$A$ is a commutative unital \fring.
The first statement in the following result is established in Delzell and Madden~\cite[Proposition~3.3]{DelMad1995}.
The second statement follows by using Proposition~\ref{P:IdrcABr}.

\begin{theorem}[Delzell and Madden]\label{T:RS2BS}
Let~$A$ be a commutative unital ring.
The canonical homomorphism $A\to\rF(A)$ induces a homeomorphism between the real spectrum of~$A$ and the Brumfiel spectrum of~$\rF(A)$.
Hence, the Stone dual of~$\Specr{A}$ is $\Idrc{\rF(A)}$.
\end{theorem}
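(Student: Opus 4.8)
The plan is to treat the first statement (the Delzell--Madden homeomorphism) as the substantive input and to derive the second statement from it purely formally, by transporting the Stone dual along that homeomorphism. Write $\eta\colon A\to\rF(A)$ for the canonical homomorphism and let $h$ denote the homeomorphism between $\Specr{A}$ and $\Specb{\rF(A)}$ supplied by the first statement. Since $\cKo(-)$ is a homeomorphism invariant --- any homeomorphism carries compact open subsets to compact open subsets in both directions --- the assignment $U\mapsto h^{-1}[U]$ will be a lattice isomorphism $\cKo(\Specr{A})\to\cKo(\Specb{\rF(A)})$. In particular $\Specr{A}$, being homeomorphic to the generalized spectral space $\Specb{\rF(A)}$ of Proposition~\ref{P:IdrcABr}, is itself generalized spectral, so that ``the Stone dual of $\Specr{A}$'' is a meaningful object \emph{via} Theorem~\ref{T:Stone}.

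Granting this, the second statement follows immediately. By Proposition~\ref{P:IdrcABr} the map $I\mapsto\setm{P\in\Specb{\rF(A)}}{I\not\subseteq P}$ is an isomorphism $\Idrc{\rF(A)}\to\cKo(\Specb{\rF(A)})$; composing it with the inverse of the isomorphism $\cKo(\Specr{A})\to\cKo(\Specb{\rF(A)})$ above yields an isomorphism $\Idrc{\rF(A)}\to\cKo(\Specr{A})$, which is exactly the assertion that the Stone dual of $\Specr{A}$ is $\Idrc{\rF(A)}$. This half of the argument is entirely formal, so there is no real obstacle once the first statement is in hand; the genuine content sits in the homeomorphism, whose proof I sketch next for completeness (it is the part I would cite rather than reprove).

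To establish the homeomorphism I would use the universal property of $\rF(A)$: every ring homomorphism from~$A$ into an \fring\ factors uniquely through~$\eta$, and $\eta(A)$ generates $\rF(A)$ as an \fring. Given $Q\in\Specb{\rF(A)}$, the quotient $\rF(A)/Q$ is a totally ordered domain, and the preimage under $A\to\rF(A)/Q$ of its positive cone is a prime cone $P_Q$ of~$A$, with support $\eta^{-1}[Q]$. Conversely, a prime cone~$P$ makes $A/\supp(P)$ a totally ordered domain, so the quotient map $A\to A/\supp(P)$ extends uniquely to an \fring\ homomorphism $\rF(A)\to A/\supp(P)$ whose kernel $Q_P$ is a prime $\ell$-ideal of $\rF(A)$. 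The identities $P_{Q_P}=P$ and $Q_{P_Q}=Q$ should come out of the uniqueness of the universal extension together with the fact that $\eta(A)$ generates $\rF(A)$ (which forces the universal extension of $A\to\rF(A)/Q$ to be the corestriction of the quotient map, so that its kernel is exactly~$Q$). This gives a bijection $Q\mapsto P_Q$.

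The hard part will be matching the two topologies. The subbasic open sets of $\Specr{A}$ are the $\setm{P}{a\notin P}$ for $a\in A$, and I would have to translate the defining condition ``$\eta(a)+Q<0$ in the totally ordered quotient $\rF(A)/Q$'' into membership conditions in~$Q$ phrased through $\eta(a)^{+}$ and $\eta(a)^{-}$, thereby identifying each $\setm{P}{a\notin P}$ with a compact open set of the form $\setm{Q}{\seqr{b}\not\subseteq Q}$ for a suitable $b\in\rF(A)^{+}$, and conversely. Verifying continuity in both directions is the only delicate step; once that is done, $h$ is a homeomorphism and, combined with the formal transport of $\cKo$ described above, the proof is complete.
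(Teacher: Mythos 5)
Your proposal matches the paper exactly: the paper cites Delzell and Madden \cite[Proposition~3.3]{DelMad1995} for the homeomorphism $\Specr{A}\cong\Specb{\rF(A)}$ and obtains the Stone-dual statement by combining it with Proposition~\ref{P:IdrcABr}, which is precisely your formal transport of $\cKo(-)$ along the homeomorphism. Your additional sketch of the bijection $P\leftrightarrow Q$ via the universal property of $\rF(A)$ is a reasonable outline of the cited result, but the paper does not reprove it and neither need you.
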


Say that a lattice is \emph{\rrep} if it is isomorphic to the Stone dual of the real spectrum of some commutative unital ring.
As in Subsection~\ref{Su:lspec}, this terminology is extended to diagrams of lattices, in a standard fashion.
It follows from Theorem~\ref{T:RS2BS} that every \rrep\ lattice is isomorphic to~$\Idrc{A}$ for some commutative unital \fring~$A$ (thus it is \brep).
We will see shortly that the converse holds (cf. Corollary~\ref{C:BrS2RS1}).

Every \rrep\ lattice is completely normal.
By Delzell and Madden~\cite{DelMad1994}, not every completely normal bounded distributive lattice can be represented in this way.
In fact, Mellor and Tressl established in~\cite{MelTre2012} the following result.

\begin{theorem}[Mellor and Tressl]\label{T:MelTre}
For every infinite cardinal~$\gl$, the class of all \rrep\ lattices cannot be defined by any class of~$\scL_{\infty,\gl}$-formulas of lattice theory.
\end{theorem}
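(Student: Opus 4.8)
The plan is to prove non-definability \emph{semantically}, via Karp's characterization of $\scL_{\infty,\gl}$-equivalence. Recall that two structures (here, bounded distributive lattices) satisfy exactly the same $\scL_{\infty,\gl}$-sentences — write $L\equiv_{\infty,\gl}M$ for this relation — if and only if there is a nonempty $\gl$-back-and-forth system between them: a family~$\cF$ of partial isomorphisms, each with domain of cardinality~$<\gl$, such that every $f\in\cF$ can be extended, on either side, so as to absorb any prescribed subset of size~$<\gl$. Hence, to show that the class of \rrep\ lattices is not $\scL_{\infty,\gl}$-definable, it suffices to produce, for each infinite cardinal~$\gl$, a pair of lattices~$L$ and~$M$ with $L\equiv_{\infty,\gl}M$, such that~$L$ is \rrep\ and~$M$ is not. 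Any putative defining class~$\Phi$ of $\scL_{\infty,\gl}$-formulas would then be forced to hold in~$M$ (since it holds in~$L$ and $L\equiv_{\infty,\gl}M$), yet also to fail in~$M$, a contradiction.

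First I would isolate a \emph{necessary} condition for real-representability that is strictly stronger than complete normality. Since every \rrep\ lattice is, by Theorem~\ref{T:RS2BS}, of the form~$\Idrc{A}$ for a commutative unital \fring~$A$, and since the structural maps~$\Idrc{f}$ are convex (Lemma~\ref{L:BrumConvex}), one expects a convexity/amalgamation constraint on the chains of prime ideals that all \rrep\ lattices must obey. The aim is a property~$\Phi_\gl$ expressible by an $\scL_{\infty,\gl^+}$-sentence, holding in every \rrep\ lattice, but all of whose witnessing configurations have ``length'' at least~$\gl^+$, so that~$\Phi_\gl$ is provably \emph{not} equivalent to any $\scL_{\infty,\gl}$-sentence.

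Next I would carry out the two-sided construction. For~$L$ I would take the Stone dual~$\Idrc{A}$ of a suitably \emph{homogeneous} real spectrum, parametrized so that its specialization root-system carries long chains yet is locally homogeneous at every scale~$<\gl$; this local homogeneity is precisely what feeds the $\gl$-back-and-forth system. For~$M$ I would perform a ``diagonal'' modification of~$L$ at a single limit point of cofinality~$\geq\gl$, destroying exactly one witness demanded by~$\Phi_\gl$ — thereby violating the necessary condition and ruling out \rrep — while leaving every configuration of size~$<\gl$ unchanged up to isomorphism. (A \emph{directed colimit} of \rrep\ lattices is again \rrep, so the passage from~$L$ to~$M$ cannot be a colimit; it must genuinely be such a diagonal, non-colimit modification.) Verifying that the resulting family of local isomorphisms satisfies the extension property, hence that $L\equiv_{\infty,\gl}M$, is the technical heart of the matter.

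The step I expect to be the main obstacle is the \emph{simultaneous} fulfilment of the two opposing demands: the defect inserted into~$M$ must be \emph{robust} enough to be a genuine obstruction to real-representability, so that a sharp, verifiable non-\rrep\ criterion is indispensable; yet it must be \emph{invisible} to every partial isomorphism of domain-size~$<\gl$, so that the back-and-forth system survives. Calibrating the cofinality of the modified point against~$\gl$, and proving that no $\scL_{\infty,\gl}$-formula can ``see'' across a gap of that cofinality, is where the real work lies; it is the completely-normal (root-system) structure of the specialization order that makes such a cofinality-based separation possible.
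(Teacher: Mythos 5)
First, note that the paper does not actually prove this statement: Theorem~\ref{T:MelTre} is quoted from Mellor and Tressl~\cite{MelTre2012}, so there is no in-paper argument to compare yours against. Judged on its own terms, your proposal correctly identifies the standard frame for such non-definability results --- by Karp's characterization of $\equiv_{\infty,\gl}$ it suffices to exhibit, for each infinite~$\gl$, bounded distributive lattices $L\equiv_{\infty,\gl}M$ with~$L$ \rrep\ and~$M$ not, and then any putative defining class of $\scL_{\infty,\gl}$-formulas yields a contradiction. That framing is sound and is indeed the shape of the Mellor--Tressl argument. But beyond the framing, nothing is proved.

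The gap is that every load-bearing step is announced rather than carried out. You never exhibit the necessary condition~$\Phi_\gl$ for real-representability that~$M$ is supposed to violate: saying that one ``expects a convexity/amalgamation constraint'' from Lemma~\ref{L:BrumConvex} is not the same as stating and proving one, and note that Lemma~\ref{L:BrumConvex} constrains the \emph{maps}~$\Idrc{f}$, not the lattices~$\Idrc{A}$ themselves, so extracting an intrinsic property of the objects requires a genuine further argument (this is exactly the kind of work done in Delzell--Madden~\cite{DelMad1994} and, for the $\ell$-spectrum analogue, in Lemma~\ref{L:RefI} and Section~\ref{S:NonrRepr} of the present paper). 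You never construct~$L$ or~$M$: ``a suitably homogeneous real spectrum'' and ``a diagonal modification at a limit point of cofinality~$\geq\gl$'' are placeholders, and it is precisely in making these precise --- keeping~$M$ completely normal, keeping the inserted defect invisible to every partial isomorphism of size~$<\gl$ so that the back-and-forth system survives, and still certifying that~$M$ is not \rrep\ --- that the entire difficulty of the theorem resides. You acknowledge this yourself (``this is where the real work lies''). As it stands the proposal is a plausible research plan, not a proof, and cannot be credited as establishing the theorem.
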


The real spectrum can also be reduced to the Zariski spectrum, as follows.
A commutative unital ring~$A$ is \emph{real-closed} (cf. Schwartz \cite{Sch1986,Sch1989}, Prestel and Schwartz~\cite{PreSch2002}) if it has no nonzero nilpotent elements, the squares in~$A$ form the positive cone of a structure of \fring\ on~$A$, $0\leq a\leq b$ implies that $a^2\in Ab$, and for every prime ideal~$P$ of~$A$, the quotient field~$A(P)$ of~$A/P$ is real closed, and~$A/P$ is integrally closed in~$A(P)$.
Every commutative unital ring has a ``real closure''~$\Cr(A)$, which is a real-closed ring together with a unital ring homomorphism $A\to\Cr(A)$.
The following result is contained in Theorem~I.3.10,  Propositions~I.3.19 and~I.3.23, and the top of page~27, in Schwartz \cite{Sch1989}.

\begin{theorem}[Schwartz]\label{T:SchwRC}
For any commutative, unital ring~$A$, the canonical homomorphism $A\to\Cr(A)$ induces a homeomorphism $\Specr{\Cr(A)}\to\Specr{A}$.
Moreover, if~$A$ is real-closed, then the support map $P\mapsto P\cap(-P)$ induces a homeomorphism $\Specr{A}\to\Spec{A}$.
\end{theorem}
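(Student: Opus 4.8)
The statement is quoted from Schwartz's memoir and will be used as a citation; I nonetheless sketch how the two assertions are proved, the set-theoretic bijectivity of each map being the conceptual core and the topology a refinement.

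\emph{The support map when $A$ is real-closed.} I would start from the standard description of the fibres of $\supp$: for any commutative unital ring $A$ and any prime ideal $\fp$, the prime cones $P$ with $\supp P=\fp$ correspond bijectively to the total orderings of the residue field $A(\fp)$. Indeed such a $P$ induces an ordering of the domain $A/\fp$, hence of its fraction field $A(\fp)$, and conversely every ordering of $A(\fp)$ pulls back to a prime cone with support $\fp$. When $A$ is real-closed, its definition forces $A(\fp)$ to be a real closed field for every $\fp$; by Artin--Schreier a real closed field carries a unique ordering, its positive cone being the set of squares. Hence each fibre is a singleton and $\supp\colon\Specr A\to\Spec A$ is a bijection.

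\emph{Upgrading to a homeomorphism.} Continuity of $\supp$ is formal: for $a\in A$ one has $\supp^{-1}(\setm{\fp}{a\notin\fp})=\setm{P}{a^2\in P\setminus\supp P}$, because $a\notin\supp P$ is equivalent to $a^2\notin\supp P$, and the latter set is open (it is the union of $\setm{P}{a\notin P}$ and $\setm{P}{-a\notin P}$); thus $\supp$ is continuous, indeed spectral. The substantive point is that $\supp$ is open, i.e.\ that its inverse is continuous. Here I would use the structural hypotheses on a real-closed ring---chiefly $0\le a\le b\Rightarrow a^2\in Ab$ together with the uniqueness of orderings---to show that the sign of an element at a prime cone is governed by its support, so that the sign conditions cutting out the real-spectrum topology descend to constructible conditions on $\Spec A$. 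Matching the compact open sets on the two sides then shows that $\cKo(\supp)$ is an isomorphism, whence $\supp$ is a homeomorphism by Theorem~\ref{T:StoneDual}.

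\emph{The real closure.} For the first assertion I would analyse the unit $A\to\Cr(A)$ step by step. Passage to the reduced ring $A_{\mathrm{red}}$ is harmless, since the support of any prime cone is a prime ideal and hence contains the nilradical, so every prime cone factors uniquely through $A_{\mathrm{red}}$ and the real spectrum is unchanged. For the remaining part of the construction I would use that the residue fields of $\Cr(A)$ are the real closures of the residue fields of $A_{\mathrm{red}}$ and that orderings lift uniquely along a real closure of a field; combined with the relevant lifting property for prime cones (lying-over and going-up along the extension), this shows that every prime cone of $A$ extends to exactly one prime cone of $\Cr(A)$, so the induced map $\Specr\Cr(A)\to\Specr A$ is a continuous bijection. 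The same compact-open bookkeeping as above then upgrades it to a homeomorphism.

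\emph{Main obstacle.} The genuine difficulty lies not in the bijectivity but in two facts underpinning it: the existence and structure of $\Cr(A)$---a real-closed ring, with residue fields the real closures of those of $A$ and sitting over $A_{\mathrm{red}}$ in a way that lets prime cones lift uniquely---and the claim that in a real-closed ring signs are detected by the support, which is what turns the support bijection into a homeomorphism rather than a mere continuous bijection. Both are exactly the results of Schwartz's memoir cited in the statement, and since the construction of $\Cr(A)$ is itself substantial, in the paper I would quote them rather than reproduce the development.
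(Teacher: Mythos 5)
The paper offers no proof of this theorem: it is quoted verbatim from Schwartz's memoir (Theorem~I.3.10, Propositions~I.3.19 and~I.3.23, and page~27 of \cite{Sch1989}), exactly as you treat it. Your supplementary sketch---fibres of the support map as orderings of residue fields, Artin--Schreier uniqueness for real closed fields, unique lifting of orderings along the real closure, and the compact-open bookkeeping for the homeomorphism claims---is consistent with the standard arguments in that reference, so there is nothing to compare beyond agreement on deferring to the citation.
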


\begin{corollary}\label{C:SchwRC1}
For any commutative, unital ring~$A$, the Stone dual of the real spectrum of~$A$ is isomorphic to the lattice~$\Idrc{\Cr(A)}$ of all finitely generated radical ideals of~$\Cr(A)$.
\end{corollary}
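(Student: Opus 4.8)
The plan is to reduce the real spectrum of~$A$ to a \emph{Zariski} spectrum by way of the real closure, and then to invoke Stone duality for the Zariski spectrum (Proposition~\ref{P:IdrcA}). The two ingredients are exactly the two halves of Theorem~\ref{T:SchwRC} (Schwartz), together with the elementary fact that the Stone dual functor $X\mapsto\cKo(X)$ carries homeomorphisms of generalized spectral spaces to isomorphisms of their dual lattices (Theorem~\ref{T:StoneDual}). Thus the entire argument is a chaining of results already recorded in this section, and I expect no genuine obstacle.

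First I would apply the first assertion of Theorem~\ref{T:SchwRC}: the canonical homomorphism $A\to\Cr(A)$ induces a homeomorphism $\Specr{\Cr(A)}\to\Specr{A}$. Next, since~$\Cr(A)$ is by construction a real-closed ring, the second assertion of Theorem~\ref{T:SchwRC} applies \emph{to~$\Cr(A)$}, so that the support map $P\mapsto P\cap(-P)$ yields a homeomorphism $\Specr{\Cr(A)}\to\Spec{\Cr(A)}$ onto the Zariski spectrum of~$\Cr(A)$. Composing these two homeomorphisms gives
\[
\Specr{A}\cong\Specr{\Cr(A)}\cong\Spec{\Cr(A)}\,.
\]

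Finally, passing to Stone duals, $\cKo(\Specr{A})$ is isomorphic to $\cKo(\Spec{\Cr(A)})$, and the latter is isomorphic to~$\Idrc{\Cr(A)}$ by Proposition~\ref{P:IdrcA}; this yields the desired isomorphism. The only point that warrants attention is a bookkeeping one: the lattice~$\Idrc{\Cr(A)}$ appearing in the statement is the lattice of finitely generated \emph{ring-theoretic} radical ideals in the sense of Subsection~\ref{Su:Zspec}, which is precisely the version governed by Proposition~\ref{P:IdrcA}, and not the radical $\ell$-ideal version of Subsection~\ref{Su:Bspec}. One should therefore be careful that the reduction lands in the Zariski spectrum of~$\Cr(A)$ (via the support map), so that it is indeed Proposition~\ref{P:IdrcA}, rather than Proposition~\ref{P:IdrcABr}, that is invoked at the last step.
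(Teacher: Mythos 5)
Your proof is correct and is precisely the argument the paper intends (the corollary is stated there without an explicit proof, being an immediate chaining of the two parts of Theorem~\ref{T:SchwRC} applied to~$A$ and to the real-closed ring~$\Cr(A)$, followed by Stone duality and Proposition~\ref{P:IdrcA}). Your closing remark correctly identifies that the relevant lattice is the one of Subsection~\ref{Su:Zspec}, not the radical $\ell$-ideal version, which is exactly the right bookkeeping point.
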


Although the two following corollaries are probably well known, we could not find them explicitly stated anywhere, so we include proofs for convenience.

\begin{corollary}\label{C:SchwRC2}
Every closed subspace of a real spectrum is a real spectrum.
\end{corollary}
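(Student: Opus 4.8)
The plan is to reduce everything to the Zariski spectrum of a real-closed ring, where closed subspaces are transparent, and then transport the conclusion back to the real spectrum.

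First, let~$A$ be a commutative unital ring and let~$Y$ be a closed subspace of~$\Specr A$. By Theorem~\ref{T:SchwRC}, the canonical homomorphism $A\to\Cr(A)$ induces a homeomorphism $\Specr{\Cr(A)}\to\Specr A$, so I may replace~$A$ by its real closure and assume from the outset that~$A$ is real-closed. Then, again by Theorem~\ref{T:SchwRC}, the support map $s\colon P\mapsto P\cap(-P)$ is a homeomorphism $\Specr A\to\Spec A$ onto the Zariski spectrum. This reduction is what makes the argument go: in a real-closed ring the real spectrum has, via~$s$, exactly the same closed sets as the Zariski spectrum, and the latter are all of the simple form $\Spec(A,\fa)$.

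Next, since~$s$ is a homeomorphism, $s[Y]$ is a closed subset of~$\Spec A$, hence of the form $\Spec(A,\fa)=\setm{\fp\in\Spec A}{\fa\subseteq\fp}$ for the radical ideal $\fa\eqdef\bigcap s[Y]$. Pulling back along~$s$ yields $Y=\setm{P\in\Specr A}{\fa\subseteq\supp P}$. The final step is to recognize this set as the real spectrum of the quotient ring~$A/\fa$: the quotient map $\pi\colon A\to A/\fa$ should induce, by pullback of prime cones $Q\mapsto\pi^{-1}[Q]$, a homeomorphism from $\Specr(A/\fa)$ onto exactly $\setm{P\in\Specr A}{\fa\subseteq\supp P}=Y$. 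Hence~$Y$ is homeomorphic to a real spectrum, as desired.

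The one point needing care — the main obstacle — is justifying that last identification $Y\cong\Specr(A/\fa)$. There are two routes. The compatibility of the real spectrum with ring quotients (prime cones of~$A/\fa$ correspond to prime cones of~$A$ whose support contains~$\fa$) is classical and can be cited from Coste and Roy~\cite{CosRoy1982}. Alternatively, one can argue in a self-contained way: show that~$A/\fa$ is again real-closed, and then invoke Theorem~\ref{T:SchwRC} a second time to get $\Spec(A/\fa)\cong\Specr(A/\fa)$, together with the standard Zariski identification $\Spec(A,\fa)\cong\Spec(A/\fa)$. The delicate part of this second route is checking that~$A/\fa$ inherits the \fring\ structure, which hinges on the fact that a radical ideal~$\fa$ of a real-closed ring is automatically order-convex: if $0\le a\le b\in\fa$ then $a^2\in Ab\subseteq\fa$ by the axiom defining real-closedness, whence $a\in\fa$ since~$\fa$ is radical. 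Moreover~$\fa$ is closed under $a\mapsto|a|$, again because $|a|^2=a^2\in\fa$ with~$\fa$ radical, so~$\fa$ is an $\ell$-ideal and~$A/\fa$ is an \fring\ with the squares as positive cone. The residue-field axioms then transfer for free, since the primes of~$A/\fa$ are exactly the primes of~$A$ containing~$\fa$, with the same residue rings.
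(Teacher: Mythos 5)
Your proposal is correct and follows essentially the same route as the paper: reduce to the Zariski spectrum of a real-closed ring via Theorem~\ref{T:SchwRC}, identify the closed subspace with $\Spec(A/\fa)$ for a radical ideal~$\fa$, observe that $A/\fa$ is again real-closed, and apply Theorem~\ref{T:SchwRC} once more. The only difference is that the paper simply cites Schwartz \cite[Theorem~I.4.5]{Sch1989} for the real-closedness of the quotient, whereas you sketch a direct verification of it.
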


\begin{proof}
By Theorem~\ref{T:SchwRC}, every real spectrum has the form~$\Spec{A}$ for some real-closed ring~$A$.
By definition, any closed subspace of~$\Spec{A}$ has the form $\Spec(A,I)\eqdef\setm{P\in\Spec{A}}{I\subseteq P}$, for a subset~$I$ of~$A$, which we may assume to be a radical ideal of~$A$.
It follows that the assignment $P\mapsto P/I$ defines a homeomorphism from $\Spec(A,I)$ onto $\Spec(A/I)$.
Now it follows from Schwartz \cite[Theorem~I.4.5]{Sch1989} that the ring~$A/I$ is real-closed.
By the second part of Theorem~\ref{T:SchwRC}, it follows that $\Spec(A/I)$ is homeomorphic to $\Specr(A/I)$.
\end{proof}

\begin{corollary}\label{C:BrS2RS1}
The class of real spectra of all commutative unital rings and the class of Brumfiel spectra of all commutative unital \fring{s} are identical.
\end{corollary}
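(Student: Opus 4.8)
The plan is to prove the two inclusions $\xR\subseteq\xBr$ and $\xBr\subseteq\xR$ separately. The first is immediate from Theorem~\ref{T:RS2BS}: the real spectrum of a commutative unital ring~$A$ is homeomorphic to the Brumfiel spectrum of the commutative unital \fring~$\rF(A)$, so every real spectrum is a Brumfiel spectrum. The substance lies in the converse. Given a commutative unital \fring~$A$, I would forget the lattice structure, keep the underlying commutative unital ring, and compare $\Specb{A}$ with the real spectrum $\Specr{A}$ of that ring. The key structural observation is that the positive cone~$A^+$ is a \emph{preordering} of the ring: it is closed under addition and multiplication, and, since squares are positive in any \fring, it contains all sums of squares. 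Consequently the set $S\eqdef\setm{P\in\Specr{A}}{A^+\subseteq P}$ equals $\bigcap_{a\in A^+}\setm{P\in\Specr{A}}{a\in P}$, an intersection of closed sets (each $\setm{P}{a\notin P}$ being subbasic open), hence a \emph{closed} subspace of $\Specr{A}$. Once $\Specb{A}$ is identified with~$S$, Corollary~\ref{C:SchwRC2} finishes the argument, since closed subspaces of real spectra are real spectra.

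The identification $\Specb{A}\cong S$ I would set up through the support map $\Phi\colon P\mapsto P\cap(-P)$, with inverse sending a prime $\ell$-ideal~$\fp$ to the prime cone $\setm{a\in A}{a+\fp\geq0}$ determined by the total order of the totally ordered domain $A/\fp$. Three things must be checked. First, $\Phi$ is well defined: for $P\in S$ the support~$\fp$ is a prime ring ideal, and to see it is an $\ell$-ideal I would use the \fring\ identity $x^+x^-=0$ (orthogonal elements have zero product); together with primeness of~$\fp$ this forces $x^+,x^-,|x|\in\fp$ whenever $x\in\fp$ (so $\fp$ is a sublattice), while $A^+\subseteq P$ gives order-convexity directly. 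Second, $\Phi$ is bijective: surjectivity is clear from the inverse, and injectivity holds because a prime cone containing $A^+$ restricts to a total order on $A/\fp$ refining the already total quotient order, hence equal to it. Third, $\Phi$ is a homeomorphism: the basic open $\setm{Q\in\Specb{A}}{a\notin Q}$ with $a\in A^+$ pulls back to $\setm{P}{-a\notin P}\cap S$, and conversely $\setm{P}{b\notin P}\cap S=\Phi^{-1}\pI{\setm{Q}{b^-\notin Q}}$, using that in the domain $A/\fp$ one of $b^+,b^-$ vanishes; this matches subbasic opens both ways.

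The step I expect to be the main obstacle is the verification that the support of a prime cone containing $A^+$ is genuinely a prime $\ell$-ideal, not merely a real prime ideal: order-convexity of a subgroup does \emph{not} by itself force it to be a sublattice, so the \fring\ zero-product identity together with the primeness of the support is doing essential work here. This is precisely the content I would extract from Madden and Schwartz. By contrast, the topological matching of subbasic opens and the final appeal to Corollary~\ref{C:SchwRC2} (which itself rests on Schwartz's real-closure machinery) are routine. Combining $\Specb{A}\cong S$ with Corollary~\ref{C:SchwRC2} yields $\xBr\subseteq\xR$, and with the inclusion from Theorem~\ref{T:RS2BS} this gives $\xBr=\xR$.
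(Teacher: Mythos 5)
Your proposal is correct and follows essentially the same route as the paper: the paper also gets $\xR\subseteq\xBr$ from Theorem~\ref{T:RS2BS}, identifies $\Specb{A}$ with the closed subspace $\Specr(A,A^+)=\setm{Q\in\Specr{A}}{A^+\subseteq Q}$ via the support map (with inverse $P\mapsto A^++P$, which is exactly your $\fp\mapsto\setm{a}{a+\fp\geq0}$), and then invokes Corollary~\ref{C:SchwRC2}. The only difference is that the paper delegates the verification of this homeomorphism to Madden and Schwartz \cite[page~49]{MadSch1999}, whereas you carry out the details (correctly) yourself.
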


\begin{proof}
It follows from Theorem~\ref{T:RS2BS} that every real spectrum is the Brumfiel spectrum of some commutative unital \fring.
Conversely, for every commutative unital \fring~$A$, the assignment $P\mapsto A^++P$ defines a homeomorphism from~$\Specb{A}$ onto $\Specr(A,A^+)\eqdef\setm{Q\in\Specr{A}}{A^+\subseteq Q}$, with inverse the support map $Q\mapsto Q\cap(-Q)$ (cf. Madden and Schwartz \cite[page~49]{MadSch1999}).
Since~$\Specr(A,A^+)$ is, by definition, a closed subspace of~$\Specr{A}$, it follows form Corollary~\ref{C:SchwRC2} that~$\Specb{A}$ is the real spectrum of some commutative unital ring.
\end{proof}

While real-rep\-re\-sentabil\-ity makes sense only for bounded lattices, Brumfiel-rep\-re\-sentabil\-ity can also be defined for unbounded lattices.
According to the following corollary, the two concepts agree on bounded lattices.

\begin{corollary}\label{C:BrS2RS2}
A bounded distributive lattice is \rrep\ if{f} it is \brep.
\end{corollary}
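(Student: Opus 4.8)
The plan is to deduce the corollary from Corollary~\ref{C:BrS2RS1}, which (\emph{via} Stone duality) amounts to the statement that a bounded distributive lattice is \rrep\ if and only if it is isomorphic to~$\Idrc{B}$ for some \emph{unital} commutative \fring~$B$. The forward direction is then immediate: if~$D$ is \rrep, then by Theorem~\ref{T:RS2BS} it is isomorphic to~$\Idrc{\rF(A)}$ for a commutative unital ring~$A$, and~$\rF(A)$ is a unital \fring, so~$D$ is \brep. The whole content therefore lies in the converse, where the only gap to bridge is that the witnessing \fring\ need not be unital: I must show that if~$D$ is bounded and $D\cong\Idrc{A}$ for some commutative \fring~$A$, then $D\cong\Idrc{B}$ for some commutative \emph{unital} \fring~$B$.

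First I would locate a weak unit. Since~$D$ is bounded, $\Idrc{A}$ has a largest element; as every element of~$\Idrc{A}$ has the form~$\seqr{a}$ with $a\in A^+$, that top element is~$\seqr{u}$ for some $u\in A^+$. Because $\seqr{|a|}\subseteq\seqr{u}$ and $a\in\seqr{|a|}$ for every $a\in A$, one gets $A=\seqr{u}$; in particular $u\notin P$ for every $P\in\Specb{A}$, since otherwise $\seqr{u}=A\subseteq P$ would contradict the properness of~$P$.

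The key step is then to invert~$u$. Set $B\eqdef A_u$, the localization of the (possibly non-unital) ring~$A$ at the multiplicative set $\setm{u^n}{n\geq1}$, ordered by declaring $a/u^n\geq0$ if{f} $u^m a\geq0$ in~$A$ for some~$m$. I would check that this order is well defined and turns~$B$ into a commutative \emph{unital} \fring, with unit the class of~$u/u$: writing~$A$ as a subdirect product of totally ordered rings and localizing factorwise exhibits~$B$ as a subdirect product of totally ordered rings, whence~$B$ is again an \fring. Localization carries prime $\ell$-ideals of~$B$ to prime $\ell$-ideals of~$A$ avoiding~$u$ and, conversely, expands prime $\ell$-ideals of~$A$ avoiding~$u$ to prime $\ell$-ideals of~$B$; order-convexity is preserved because~$u>0$. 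Since~$u$ avoids \emph{every} member of~$\Specb{A}$, this yields a homeomorphism $\Specb{B}\to\Specb{A}$, hence, by Stone duality (Proposition~\ref{P:IdrcABr}), an isomorphism $\Idrc{B}\cong\Idrc{A}\cong D$. As~$B$ is a unital commutative \fring, Corollary~\ref{C:BrS2RS1} shows that~$\Specb{B}$ is a real spectrum, so~$D$ is \rrep, as desired.

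The main obstacle is this middle step: verifying that~$A_u$ is legitimately a unital \fring\ and that the classical bijection between primes of~$A_u$ and primes of~$A$ disjoint from $\setm{u^n}{n\geq1}$ restricts to a homeomorphism of \emph{Brumfiel} spectra, i.e.\ that it respects the extra requirement that the relevant prime ideals be order-convex sublattices. This is routine localization bookkeeping once one is careful that~$u$ is positive and lies outside every prime $\ell$-ideal, but it is the place where the order structure, rather than mere ring theory, must be handled with care. (Alternatively, one could bypass the explicit invocation of Corollary~\ref{C:BrS2RS1} and argue directly that $\Specb{B}$ is homeomorphic to the closed subspace $\Specr(B,B^+)$ of~$\Specr{B}$, hence a real spectrum by Corollary~\ref{C:SchwRC2}.)
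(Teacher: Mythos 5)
Your proposal is correct and takes essentially the same route as the paper: both reduce, via Corollary~\ref{C:BrS2RS1} and Stone duality, to showing that a bounded \brep\ lattice can be represented by a commutative \emph{unital} \fring, and both achieve this by localizing at a positive element~$u$ with $A=\seqr{u}_A$. The only minor divergence is in verifying $\Idrc{A}\cong\Idrc{A[u^{-1}]}$: the paper does a direct lattice-level computation with radical $\ell$-ideals (showing the induced map~$\bgk$ is injective), whereas you route the verification through the prime $\ell$-ideal correspondence for localizations --- the same ``routine bookkeeping'' you flag, handled on the spectral rather than the lattice side.
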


\begin{proof}
By Stone duality and Corollary~\ref{C:BrS2RS1}, it suffices to prove that if a \emph{bounded} distributive lattice is \brep, then it can be represented by a commutative \emph{unital} \fring.
Let~$A$ be a commutative \fring\ such that~$\Idrc{A}$ has a top element.
Thus, $A=\seqr{u}_A$ for some~$u\in A^+$.
The localization~$A[u^{-1}]$ of~$A$ with respect to the multiplicative subset $S\eqdef\setm{u^n}{0<n<\go}$, endowed with the positive cone $A[u^{-1}]^+\eqdef\setm{x/u^n}{x\in A^+\,,\ 0<n<\go}$, is a commutative unital%
\footnote{This also applies to the degenerate case where~$u$ is nilpotent, in which case $A[u^{-1}]$ collapses to the zero element.}
\fring, for which the canonical homomorphism $\gk\colon A\to A[u^{-1}]$, $x\mapsto xu/u$ is an \fring\ homomorphism.
Obviously, the induced map $\bgk\colon\Idrc{A}\to\Idrc{A[u^{-1}]}$ is surjective.
We claim that~$\bgk$ is one-to-one.
Let $x,y\in A^+$ such that $\bgk\pI{\seqr{x}_A}\subseteq\bgk\pI{\seqr{y}_A}$.
This means that $\seqr{\gk(x)}_{A[u^{-1}]}\subseteq\seqr{\gk(y)}_{A[u^{-1}]}$\,, that is, there are $z\in A^+$ and positive integers~$m$, $n$ such that $\gk(x)^m\leq(z/u^n)\gk(y)$.
It follows that there are positive integers~$k$, $l$ such that $x^mu^k\leq zyu^l$ within~$A$.
Since $\seqr{u^k}_A=\seqr{u}_A=A$, we obtain, using~\eqref{Eq:jjmmseqr},
 \begin{equation*}
 \seqr{x}_A=\seqr{x^m}_A=\seqr{x^m}_A\cap\seqr{u^k}_A=
 \seqr{x^mu^k}_A\subseteq\seqr{zyu^l}_A\subseteq
 \seqr{y}_A\,,
 \end{equation*}
as required.
Therefore, $\Idrc{A}\cong\Idrc{A[u^{-1}]}$, with~$A[u^{-1}]$ a commutative unital \fring.
\end{proof}

\section{Counterexamples constructed from condensates}\label{S:DiagLift}

In the present section we shall apply the construction of a \emph{condensate}, put to use in the author's paper~\cite{CoordCX} for one arrow and studied in depth in Gillibert and Wehrung~\cite{Larder} for more complicated diagrams.
This construction enables us to construct non-representable \emph{objects} from non-representable \emph{arrows} (cf. Theorems~\ref{T:RCRnotlrep} and~\ref{T:NonHomrealSpec}), and it runs as follows.

\begin{definition}\label{D:Cond}
Let~$A$ and~$B$ be universal algebras (in a given similarity type) and let~$I$ be a set.
The \emph{$I$-condensate} of a homomorphism $\gf\colon A\to B$ is the following subalgebra of~$A\times B^I$:
 \[
 \Cond(\gf,I)\eqdef\setm{(x,y)\in A\times B^I}{y_i=\gf(x)
 \text{ for all but finitely many }i}\,.
 \]
\end{definition}

\begin{lemma}\label{L:CondasColimit}
The condensate $\Cond(\gf,I)$ is a directed union of copies of algebras of the form $A\times B^J$ for finite $J\subseteq I$.
\end{lemma}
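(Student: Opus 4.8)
The plan is to produce the required copies directly. For each finite subset $J\subseteq I$, let $C_J$ denote the set of all $(x,y)\in A\times B^I$ such that $y_i=\gf(x)$ for every $i\in I\setminus J$. First I would check that each $C_J$ is a subalgebra of $A\times B^I$. This is the only place where the homomorphism property of~$\gf$ enters, and I expect it to be the sole substantive point. Indeed, given elements $(x^{(1)},y^{(1)}),\dots,(x^{(n)},y^{(n)})$ of~$C_J$ and any $n$-ary operation~$\omega$ of the similarity type, the $A$-component of the componentwise value is $\omega(x^{(1)},\dots,x^{(n)})$, while for each $i\in I\setminus J$ the $i$-th coordinate of that value equals $\omega\bigl(\gf(x^{(1)}),\dots,\gf(x^{(n)})\bigr)=\gf\bigl(\omega(x^{(1)},\dots,x^{(n)})\bigr)$, since~$\gf$ is a homomorphism. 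Hence the value again lies in~$C_J$.

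Next I would exhibit an isomorphism $C_J\cong A\times B^J$. The forced coordinates carry no information, so the restriction map $(x,y)\mapsto\bigl(x,(y_i)_{i\in J}\bigr)$ is a bijection from~$C_J$ onto $A\times B^J$; its inverse sends $(x,z)$ to the pair whose $A$-component is~$x$, whose $J$-coordinates are given by~$z$, and whose remaining coordinates are all equal to~$\gf(x)$. As a composite of projections, this map commutes with every operation, hence is an isomorphism of algebras.

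Finally I would verify the two structural claims. The family $(C_J)_J$, indexed by the finite subsets of~$I$ ordered under inclusion, is directed: if $J_0\subseteq J_1$ then $C_{J_0}\subseteq C_{J_1}$, and any two finite subsets are contained in their (finite) union. Its union is exactly $\Cond(\gf,I)$, since a pair $(x,y)$ lies in some~$C_J$ if{f} $y_i=\gf(x)$ for all but finitely many~$i$, which is precisely the defining condition of the condensate. Thus $\Cond(\gf,I)=\bigcup_J C_J$ is a directed union of copies of the algebras $A\times B^J$ with $J\subseteq I$ finite, as required. Beyond the subalgebra check of the first paragraph, everything here is routine bookkeeping about finite subsets of~$I$, so I anticipate no genuine difficulty.
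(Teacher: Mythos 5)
Your proof is correct and follows exactly the paper's own argument: the same subalgebras $C_J$, the same identification $C_J\cong A\times B^J$ via forgetting the forced coordinates, and the same directed-union observation. You simply spell out the routine verifications (subalgebra closure via the homomorphism property of~$\gf$, directedness, exhaustion) that the paper leaves implicit.
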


\begin{proof}
For each finite $J\subseteq I$, denote by~$C_J$ the subalgebra of $\Cond(\gf,I)$ consisting of all pairs~$(x,y)\in A\times B^I$ such that~$y$ is constant on $I\setminus J$, with value~$\gf(x)$.
Then~$\Cond(\gf,I)$ is the directed union of all~$C_J$.
Clearly, $C_J\cong A\times B^J$.
\end{proof}

In particular, it follows from Lemma~\ref{L:CondasColimit} that if~$A$ and~$B$ are members of a class~$\cC$ of algebras, closed under nonempty finite direct products and directed colimits, then so is the condensate~$\Cond(\gf,I)$.
For example:

\begin{itemize}
\item
Whenever~$A$ and~$B$ are distributive lattices with zero and~$\gf$ is a $0$-lattice homomorphism, then~$\Cond(\gf,I)$ is a distributive lattice with zero.

\item
Whenever~$A$ and~$B$ are Abelian \lgrp{s} and~$\gf$ is an \lhom, then $\Cond(\gf,I)$ is an Abelian \lgrp.

\item
Whenever~$A$ and~$B$ are real-closed rings and~$\gf$ is a unital ring homomorphism, then $\Cond(\gf,I)$ is a real-closed ring.
 
 \end{itemize}
 
The proof of the following lemma is an extension of the one of Wehrung~\cite[The\-o\-rem~9.3]{CoordCX}.
 It is also an instance of a much more general result, called the \emph{Condensate Lifting Lemma} (\emph{CLL}), established in Gillibert and Wehrung~\cite{Larder}, that enables to infer representability of lat\-tice-indexed diagrams from representability of certain larger objects, also called there condensates.
In particular, as we will see shortly, Lemma~\ref{L:ellLift} can be extended to (many) other functors than~$\Idlc$\,.
The use of~CLL requires quite an amount of technical steps and although it may be unavoidable for complicated diagrams, the case of one arrow can be handled directly; thus we include here a self-contained proof for convenience.
 
\begin{lemma}\label{L:ellLift}
Let~$A$ and~$B$ be distributive lattices with zero, with~$A$ countable, and let $\gf\colon A\to B$ be a non-\lrep\ $0$-lattice homomorphism.
Then for any set~$I$, if the lattice homomorphism~$\gf$ is \lrep, then the distributive lattice~$\Cond(\gf,I)$ is \lrep.
If~$I$ is uncountable, then the converse holds.
\end{lemma}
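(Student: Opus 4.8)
For the forward implication (representability of the arrow~$\gf$ forcing representability of the object~$\Cond(\gf,I)$) I would argue purely formally, using neither cardinality hypothesis. Suppose $\gf$ is \lrep\ as an arrow, witnessed by Abelian \lgrp{s} $G,H$, an \lhom\ $\gs\colon G\to H$, and isomorphisms $A\cong\Idlc G$, $B\cong\Idlc H$ identifying $\gf$ with $\Idlc\gs$. The plan is to form the \lgrp\ condensate $\Cond(\gs,I)\le G\times H^I$, which is an Abelian \lgrp\ by the remarks following Lemma~\ref{L:CondasColimit}, and to check that $\Idlc\bigl(\Cond(\gs,I)\bigr)\cong\Cond(\gf,I)$. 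By Lemma~\ref{L:CondasColimit} applied in the category of \lgrp{s}, $\Cond(\gs,I)=\varinjlim_{J}(G\times H^J)$ over the finite $J\subseteq I$, with the obvious inclusions as transition maps. Since $\Idlc$ preserves nonempty finite direct products and directed colimits, applying it term by term and using $\Idlc(G\times H^J)\cong\Idlc G\times(\Idlc H)^J\cong A\times B^J$ yields $\Idlc\bigl(\Cond(\gs,I)\bigr)\cong\varinjlim_J(A\times B^J)=\Cond(\gf,I)$. One only has to observe that the transition maps match, which is immediate: the map $G\times H^J\to G\times H^{J'}$ is built from $\id$ and from $\gs$ on the new coordinates, so $\Idlc$ turns it into the transition map of $\Cond(\gf,I)$, built from $\id$ and $\gf$.

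For the converse, where the uncountability of~$I$ is essential, I would fix an isomorphism $\Cond(\gf,I)\cong\Idlc K$ and identify the two lattices. First I would introduce the coordinate projections $p\colon\Cond(\gf,I)\to A$, $(x,y)\mapsto x$, and $q_i\colon\Cond(\gf,I)\to B$, $(x,y)\mapsto y_i$, together with the finite \emph{support} $\supp(x,y)\eqdef\setm{i}{y_i\neq\gf(x)}$, which is subadditive for $\vee$ and $\wedge$ because these are computed coordinatewise and $\gf$ preserves them. The first step is to verify, by a short computation with supports, that $p$ and each $q_i$ are closed surjective $0$-lattice homomorphisms. (For closedness of~$p$: given $x_0\le x_1\vee a$, take the condensate element with first coordinate~$a$, equal to~$\gf(a)$ off a finite set and matching the first datum on the finitely many exceptional coordinates; $q_i$ is analogous.) Granting this, Lemma~\ref{L:FactClosed} gives $A\cong\Idlc(K/I_A)$ and $B\cong\Idlc(K/I_i)$, where $I_A=\setm{z\in K}{p\seql{z}=0}$ and $I_i=\setm{z\in K}{q_i\seql{z}=0}$, the isomorphisms sending $p\seql z\mapsto\seql{z+I_A}$ and $q_i\seql z\mapsto\seql{z+I_i}$.

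The cardinality hypotheses enter next, through a free-coordinate selection. Since~$A$ is countable, I would choose for each $a\in A$ an element $z_a\in K^+$ with $p\seql{z_a}=a$; each $\seql{z_a}$ has finite support, so $S\eqdef\bigcup_{a\in A}\supp\seql{z_a}$ is countable, and as~$I$ is uncountable there is a coordinate $i^\ast\in I\setminus S$. For this~$i^\ast$ one has $q_{i^\ast}\seql{z_a}=\gf(a)$ for every~$a$, since the $i^\ast$-th coordinate of $\seql{z_a}=(a,y)$ equals $\gf(a)$; in other words, at the free coordinate the projection recovers the arrow, $q_{i^\ast}\seql{z_a}=\gf\bigl(p\seql{z_a}\bigr)$.

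The hard part will be the final assembly: converting this coordinatewise agreement into a genuine \lhom\ with image~$\gf$ under~$\Idlc$. The naive quotient map $K/I_A\to K/I_{i^\ast}$ need not exist, since an element of~$I_A$ may be nonzero at~$i^\ast$ (its support can meet~$i^\ast$), so that $I_A\not\subseteq I_{i^\ast}$; equivalently, the diagonal section $s\colon A\to\Cond(\gf,I)$, $a\mapsto\bigl(a,(\gf a)_i\bigr)$, does not lift to an \lhom. This is precisely the obstruction that the free coordinate is designed to circumvent: I would pass to the countable \lgrp\ generated by the chosen lifts $\set{z_a}$, within which the absence of~$i^\ast$ from every relevant support removes the interfering ``noise'', and assemble~$\gf$ as~$\Idlc$ of an \lhom\ between the resulting countable representatives of~$A$ and~$B$. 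This single-arrow assembly is a direct, self-contained instance of the Condensate Lifting Lemma of Gillibert and Wehrung~\cite{Larder}, following the pattern of Wehrung~\cite{CoordCX}; executing it rigorously, rather than the two preceding bookkeeping steps, is where the real work lies.
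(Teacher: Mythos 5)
Your forward direction is correct and is exactly the paper's argument: apply the product- and colimit-preserving functor $\Idlc$ to the \lgrp-condensate $\Cond(\gs,I)=\varinjlim_J(G\times H^J)$. The converse also follows the paper's route (coordinate projections, closedness, Lemma~\ref{L:FactClosed}, a countable subobject, a ``free'' coordinate), but it has two gaps. First, a quantifier-order error in the choice of the free coordinate: you pick $i^\ast$ outside $S\eqdef\bigcup_{a\in A}\supp\seql{z_a}$, i.e.\ outside the supports of the chosen \emph{generators} only, and only afterwards form the countable $\ell$-subgroup~$H$ generated by the~$z_a$. But the deferred inclusion $H\cap I_A\subseteq I_{i^\ast}$ requires $i^\ast\notin\supp\seql{z}$ for \emph{every} $z\in H$, and this does not follow from $i^\ast\notin S$: support is subadditive for the lattice operations of the condensate (hence for $\seql{x+y}$ and $\seql{x\wedge y}$ with $x,y\geq0$), but an element such as $\seql{(z_a-z_b)^+}$ lies in the image of $\Idlc H$ without lying in the sublattice generated by the $\seql{z_a}$, so its support need not be contained in~$S$. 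The fix is the paper's order of operations: form the countable~$H$ first, then take the still-countable set $\bigcup_{z\in H}\supp\seql{z}$, and choose $i^\ast$ outside \emph{that}.

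Second, you explicitly stop before the step you yourself identify as ``where the real work lies'', and that step is the mathematical content of the converse: proving $H\cap I_A\subseteq I_{i^\ast}$ (so that $x+I_A\mapsto x+I_{i^\ast}$ is a well-defined \lhom\ $f\colon H/I_A\to K/I_{i^\ast}$) and then verifying the commutation $\gf\circ\ga'=\gb_{i^\ast}\circ(\Idlc f)$, which is what it means for~$f$ to represent~$\gf$. Invoking CLL does not discharge this, since the whole point of the lemma is to give a self-contained one-arrow argument. To be fair, once the free coordinate is chosen correctly both verifications are short --- the inclusion is two lines ($z\in H\cap I_A$ gives $p\seql{z}=0$, hence $q_{i^\ast}\seql{z}=\gf(0)=0$ by freeness of $i^\ast$ over all of~$H$), and the commuting square is a routine chain of equalities --- but as written your proof neither makes the inclusion true nor carries out the check.
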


\begin{proof}
Suppose first that~$\gf$ is \lrep.
This means that there are Abelian \lgrp{s}~$G$ and~$H$, together with an \lhom~$f\colon G\to H$, such that~$\Idlc{f}\cong\gf$ (as arrows).
Since the functor~$\Idlc$ commutes with nonempty finite direct products and with directed colimits, it sends the representation
 \[
 \Cond(f,I)=\varinjlim_{J\subset I\text{ finite}}(G\times H^J)\,,
 \]
given by Lemma~\ref{L:CondasColimit}, to a representation
 \[
 \Idlc\Cond(f,I)\cong\varinjlim_{J\subset I\text{ finite}}(A\times B^J)
 \cong\Cond(\gf,I)\,.
 \]
Hence, $\Cond(\gf,I)$ is \lrep.

Suppose, conversely, that~$I$ is uncountable and the distributive lattice~$E\eqdef\Cond(\gf,I)$ is \lrep.
This means that there are an Abelian \lgrp~$G$ and an isomorphism $\eps\colon\Idlc{G}\to E$.
Denote by $p\colon E\twoheadrightarrow A$ and $q_i\colon E\twoheadrightarrow B$, for $i\in I$, the canonical projections from~$E$.
The set $U\eqdef\setm{x\in G}{(p\circ\eps)(\seql{x}_{G})=0}$ is an $\ell$-ideal of~$G$; denote by $u\colon G\twoheadrightarrow G/U$ the canonical projection.
Since~$p$ is easily seen to be a closed map (cf. Definition~\ref{D:closedmap}), it follows from Lemma~\ref{L:FactClosed} that there is a unique isomorphism $\ga\colon\Idlc(G/U)\to A$ such that
 \begin{equation}\label{Eq:pe=au}
 p\circ\eps=\ga\circ(\Idlc{u})\,.
 \end{equation}
Likewise, for every $i\in I$, the set $V_i\eqdef\setm{x\in G}{(q_i\circ\eps)(\seql{x}_{G})=0}$ is an $\ell$-ideal of~$G$;
denote by $v_i\colon G\twoheadrightarrow G/V_i$ the canonical projection.
As for~$U$ and~$\ga$, there is a unique isomorphism $\gb_i\colon\Idlc(G/V_i)\to B$ such that
 \begin{equation}\label{Eq:qie=avi}
 q_i\circ\eps=\gb_i\circ(\Idlc{v_i})\,.
 \end{equation}
Since~$A$ is countable, a standard L\"owenheim-Skolem type argument shows that there exists a countable $\ell$-subgroup~$H$ of~$G$ such that~$\ga$ induces an isomorphism $\ga'\colon\Idlc(H/U)\to A$ (where we set $H/U\eqdef\setm{x+U}{x\in H}$).
We are going to argue that for every index~$i$ outside a certain countable subset of~$I$, the \lhom\ $f\colon H/U\to G/V_i$\,, $x+U\mapsto x+V_i$ is well-defined and represents the lattice homomorphism $\gf\colon A\to B$.
Our argument is partly illustrated in Figure~\ref{Fig:ellLift}.
The diagram of Figure~\ref{Fig:ellLift} is not commutative, because $\gf\circ p\neq q_i$ as a rule.

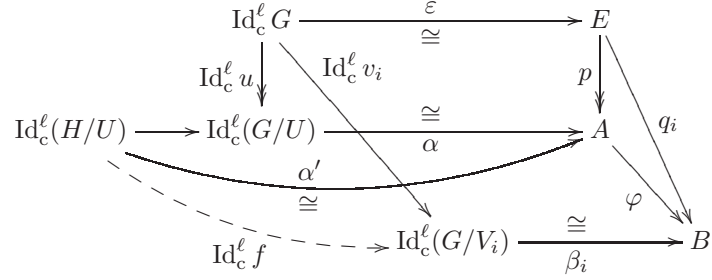
\begin{figure}[htb]
 \[
 \def\labelstyle{\displaystyle}
 \xymatrix{
 &\Idlc{G}\ar[rr]^{\eps}_{\cong}\ar@{->>}[d]_{\Idlc{u}}
 \ar[ddr]^(.3){\!\Idlc{v_i}}
 && E\ar@{->>}[d]_p\ar[ddr]^{q_i} &\\
 \Idlc(H/U)\ar[r]\ar@/_1.4pc/@{-->}[rrd]_{\Idlc{f}}
 \ar@/_1.8pc/[rrr]^(.45){\ga'}_(.45){\cong} &
 \Idlc(G/U)\ar[rr]^{\cong}_{\ga} && A\ar[dr]_{\gf} &\\
 && \Idlc(G/V_i)\ar[rr]^{\cong}_{\gb_i} && B 
 }
 \]
\caption{Illustrating the proof of Lemma~\ref{L:ellLift}}
\label{Fig:ellLift}
\end{figure}

For each $x\in H$, the element $\eps(\seql{x}_{G})$ belongs to~$E$, that is, $(\gf\circ p)\pI{\eps(\seql{x}_{G})}=q_i\pI{\eps(\seql{x}_{G})}$ for all but finitely many~$i$.
Since~$H$ is countable, there exists a countable subset~$J$ of~$I$ such that
 \begin{equation}\label{Eq:fp=qiae}
 (\gf\circ p)\pI{\eps(\seql{x}_{G})}=q_i\pI{\eps(\seql{x}_{G})}
 \text{ for all }x\in H\text{ and all }i\in I\setminus J\,.
 \end{equation}
Pick $i\in I\setminus J$.
We claim that $H\cap U\subseteq V_i$.
Every $x\in H\cap U$ satisfies the equations
 \[
 (\Idlc{u})\pI{\seql{x}_{G}}=\seql{x+U}_{G/U}=0\,,
 \]
thus, using~\eqref{Eq:pe=au}, $p\pI{\eps(\seql{x}_{G})}=(\ga\circ\Idlc{u})\pI{\seql{x}_{G}}=0$.
By~\eqref{Eq:fp=qiae}, it follows that $q_i\pI{\eps(\seql{x}_{G})}=0$.
By~\eqref{Eq:qie=avi}, this means that $\gb_i\pI{\seql{x+V_i}_{G/V_i}}=0$, that is, since~$\gb_i$ is an isomorphism, $x\in V_i$, thus completing the proof of our claim.

It follows that there exists a unique \lhom\ $f\colon H/U\to G/V_i$ such that $f(x+U)=x+V_i$ for every $x\in H$.
For every $x\in H$,
 \begin{align*}
 (\gf\circ\ga')\pI{\seql{x+U}_{H/U}}&=
 (\gf\circ\ga)\pI{\seql{x+U}_{G/U}}\\
 &=(\gf\circ\ga\circ\Idlc{u})\pI{\seql{x}_G}\\
 &=(\gf\circ p\circ\eps)\pI{\seql{x}_G}
 &&(\text{use~\eqref{Eq:pe=au}})\\
 &=(q_i\circ\eps)\pI{\seql{x}_G}
 &&(\text{use~\eqref{Eq:fp=qiae}})\\
 &=(\gb_i\circ\Idlc{v_i})\pI{\seql{x}_G}
 &&(\text{use~\eqref{Eq:qie=avi}})\\
 &=\gb_i(\seql{x+V_i}_{G/V_i})\\
 &=(\gb_i\circ\Idlc{f})\pI{\seql{x+U}_{H/U}}\,,
 \end{align*}
so~$\gf\circ\ga'=\gb_i\circ\Idlc{f}$.
Therefore, $f$ represents~$\gf$.
\end{proof}

\begin{theorem}\label{T:RCRnotlrep}
There exists a real-closed ring, of cardinality~$\aleph_1$, whose real spectrum is not homeomorphic to the $\ell$-spectrum of any Abelian \lgrp.
\end{theorem}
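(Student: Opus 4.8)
The plan is to realize the desired real spectrum as the Stone dual of a \emph{condensate} of real-closed rings, built over a single homomorphism that is representable by real-closed rings but not by \lgrp{s}. The engine is Lemma~\ref{L:ellLift}: if $\gf\colon A\to B$ is a $0$-lattice homomorphism with $A$ countable that is not \lrep, and if $I$ is uncountable, then $\Cond(\gf,I)$ is not \lrep. So it suffices to exhibit a non-\lrep\ homomorphism $\gf$ that arises as $\Idrc f$ for a map $f$ of \emph{countable} real-closed rings, and then to take $R\eqdef\Cond(f,\go_1)$. The key point is that \lrep\ and \brep\ arrows obey different closure conditions: by Lemma~\ref{L:Idcf} every arrow $\Idlc g$ is \emph{closed}, so any non-closed $0$-lattice homomorphism fails to be \lrep, whereas by Lemma~\ref{L:BrumConvex} an arrow $\Idrc f$ need only be \emph{convex}. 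I would therefore hunt for a homomorphism of real-closed rings whose induced map on finitely generated radical ideals is convex but not closed; the inclusion $V\hookrightarrow K$ of a valuation ring into its fraction field is the natural candidate, since the image of $\Spec K$ is the (non-closed) generic point of $\Spec V$.

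Concretely, let $K$ be the real closure of $\QQ(t)$ with $t$ a positive infinitesimal; then $K$ is a \emph{countable} real-closed field. Let $V$ be the convex hull of $\QQ$ in $K$, the subring of finite elements. Since $K$ is real-closed, $V$ is a convex valuation subring with divisible value group and real-closed residue field; checking the definition (no nilpotents; every $a\in V^+$ is a square in $V$, as $v(\sqrt a)=\tfrac12 v(a)\geq0$; $0\leq a\leq b$ forces $a^2\in Vb$; and $V$, $V/\mathfrak m$ are integrally closed in their real-closed fraction fields) shows $V$ is a countable real-closed ring with $V\subsetneq K$. Its spectrum $\Spec V=\set{(0),\mathfrak m}$ is a two-element chain, so $\Idrc V\cong\three$ and $\Idrc K\cong\two$. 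As a generator of $\mathfrak m$ becomes a unit in $K$, the map $\gf\eqdef\Idrc f\colon\three\to\two$ sends the least element to $0$ and both larger elements to $1$. Testing Definition~\ref{D:closedmap} with $(a_0,a_1,b)=(\one,c,0)$, $c$ the middle element, gives $\gf(\one)=1\leq\gf(c)\vee0$ while the only $x$ with $\gf(x)=0$ is $x=0$ and $c\vee0=c\ngeq\one$; hence $\gf$ is not closed, so not \lrep.

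To assemble, set $R\eqdef\Cond(f,\go_1)$. By the closure property recorded after Lemma~\ref{L:CondasColimit}, $R$ is a real-closed ring, and counting the pairs $(x,y)$ that are eventually $f(x)$ gives $\card R=\aleph_1$. Since $\Idrc$ carries finite products to products of lattices (the spectrum of a finite product being a disjoint union) and commutes with directed colimits, Lemma~\ref{L:CondasColimit} yields $\Idrc R\cong\varinjlim_{J}(\three\times\two^{J})\cong\Cond(\gf,\go_1)$. Because $\go_1$ is uncountable and $\gf$ is not \lrep, Lemma~\ref{L:ellLift} shows $\Cond(\gf,\go_1)$ is not \lrep. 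Finally, as $R$ is real-closed, Theorem~\ref{T:SchwRC} gives $\Specr R\cong\Spec R$, whose Stone dual is $\Idrc R\cong\Cond(\gf,\go_1)$; since this lattice is not \lrep, $\Specr R$ is not homeomorphic to the $\ell$-spectrum of any Abelian \lgrp.

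The main obstacle I expect is locating the right \emph{arrow}: by Theorem~\ref{T:MV1} every countable completely normal lattice is \lrep, so no single \emph{object} can separate the two worlds, and everything hinges on a convex-but-not-closed $\Idrc f$ realized by \emph{countable} real-closed rings. The closed-versus-convex contrast between Lemmas~\ref{L:Idcf} and~\ref{L:BrumConvex} is exactly what the valuation-ring inclusion exploits. The remaining points---that the convex hull $V$ really satisfies the paper's definition of a real-closed ring, and that $\Idrc$ preserves the product and colimit structure needed for $\Idrc R\cong\Cond(\gf,\go_1)$---are routine verifications rather than genuine difficulties.
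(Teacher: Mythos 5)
Your proposal is correct and is essentially the paper's own proof: the paper also takes a countable non-Archimedean real-closed field $K$, its convex subring $A$ of finite elements, forms $R=\Cond(\eps,\go_1)$ for the inclusion $\eps\colon A\hookrightarrow K$, and derives the contradiction from Lemma~\ref{L:ellLift} together with the fact that the induced map $\Idrc{A}\to\Idrc{K}$ (a surjection from a chain with more than two elements onto $\two$) is not closed, whereas every \lrep\ arrow is closed by Lemma~\ref{L:Idcf}. The only differences are cosmetic: you pin down $K$ and the value of $\Idrc{V}$ as $\three$ explicitly, and you apply the uncountable-$I$ direction of Lemma~\ref{L:ellLift} in contrapositive form rather than inside a proof by contradiction.
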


\begin{proof}
Let~$K$ be any countable, non-Archimedean real-closed field.
The subset
 \[
 A\eqdef\setm{x\in K}
 {(\exists n<\go)(-n\cdot 1_K\leq x\leq n\cdot 1_K)}
 \]
is an order-convex unital subring of~$K$, thus it is a real-closed ring.
Hence, denoting by $\eps\colon A\hookrightarrow K$ the inclusion map, it follows from Lemma~\ref{L:CondasColimit} that the condensate $R\eqdef\Cond(\eps,\go_1)$ is a real-closed ring.
Observe that the cardinality of~$R$ is~$\aleph_1$.

Suppose that the real spectrum of~$R$ is homeomorphic to the $\ell$-spectrum of an Abelian \lgrp~$G$.
By Proposition~\ref{T:Stone}, it follows that $\Idrc{R}\cong\Idlc{G}$.
Since the functor~$\Idrc$ commutes with nonempty finite direct products and directed colimits, it follows from Lemma~\ref{L:CondasColimit} that $\Idrc{R}\cong\Cond(\Idrc{\eps},\go_1)$.
In particular, the distributive lattice~$\Cond(\Idrc{\eps},\go_1)$ is \lrep.
By Lemma~\ref{L:ellLift}, it follows that the lattice homomorphism~$\Idrc\eps$ is \lrep.
By Lemma~\ref{L:Idcf}, it follows that the map $\beps\eqdef\Idrc\eps$ is closed.

However, $\Idrc{A}$ is a chain with more than two elements, $\Idrc{K}$ is the two-element chain, and~$\beps$ is the unique zero-separating map $\Idrc{A}\to\Idrc{K}$.
In particular, if $0<u<1$ in~$\Idrc{A}$, then $\beps(1)=\beps(u)\vee0$ but there is no $x\in\Idrc{A}$ such that $1\leq u\vee x$ and $\beps(x)\leq0$.
Hence, $\beps$ is not closed.
\end{proof}

\begin{remark}\label{Rk:RCRnotlrep}
By (the proof of) Dickmann, Gluschankof, and Lucas \cite[Proposition~1.1]{DicGlu2000}, the field~$K$ and the ring~$A$, of the proof of Theorem~\ref{T:RCRnotlrep}, can be constructed  in such a way that~$\Idrc{A}$ is the three-element chain.
By a simple L\"owenheim-Skolem type argument, $K$ may be taken countable.
Then the lattice $\Cond(\Idrc{\eps},\go_1)$ is isomorphic to the lattice~$\bD_{\go_1}$ introduced in Wehrung \cite[\S~5]{MV1}.
As observed in the final example of Wehrung \cite[\S~5]{MV1}, $\bD_{\go_1}$ is a homomorphic image of an \lrep\ distributive lattice, without being \lrep\ itself.
By Theorem~\ref{T:StoneDual}, this means that the spectrum of~$\bD_{\go_1}$ can be embedded, as a spectral subspace, into the $\ell$-spectrum of an Abelian \lgrp, without being an $\ell$-spectrum itself.
\end{remark}

The negative property satisfied by the counterexample~$R$ of Theorem~\ref{T:RCRnotlrep} cannot be strengthened further by replacing ``$\ell$-spectrum'' by ``spectral subspace of an $\ell$-spectrum''.
The reason for this is the following easy observation, which ought to be well known but for which we could not locate a reference.

\begin{proposition}\label{P:Idl2Idr}
For every commutative \fring~$A$, there are an Abelian \lgrp~$G$, which can be taken with order-unit if~$A$ is unital, together with a surjective lattice homomorphism $\mu\colon\Idlc{G}\twoheadrightarrow\Idrc{A}$.
Hence, every \brep\ lattice is a homomorphic image of some \lrep\ lattice.
\end{proposition}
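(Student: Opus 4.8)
The plan is to take, in the general case, $G$ to be the underlying additive \lgrp\ of~$A$ and to define $\mu$ by $\mu(\seql{a}_G)\eqdef\seqr{a}_A$ for $a\in A^+=G^+$, obtaining the order-unit refinement for unital~$A$ afterwards by a lexicographic trick. First I would check that~$\mu$ is well defined and monotone. The key point is the implication
\[
\seql{a}_G\subseteq\seql{b}_G\ \Longrightarrow\ \seqr{a}_A\subseteq\seqr{b}_A\,,\qquad\text{for all }a,b\in G^+.
\]
Indeed, if $\seql{a}_G\subseteq\seql{b}_G$ then $a\leq nb$ for some $n<\go$, and since~$\seqr{b}_A$ is an order-convex subgroup of~$A$ containing~$nb$, it contains~$a$, whence $\seqr{a}_A\subseteq\seqr{b}_A$ by minimality of~$\seqr{a}_A$. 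Applying this in both directions shows that $\seql{a}_G=\seql{b}_G$ forces $\seqr{a}_A=\seqr{b}_A$, so~$\mu$ is well defined and monotone. Surjectivity is immediate, since $\Idrc{A}=\setm{\seqr{a}_A}{a\in A^+}$ and every such generator is hit.

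Next I would verify that~$\mu$ is a $0$-lattice homomorphism by matching the structural formulas on both sides. For $a,b\in G^+=A^+$ one has $a+b\geq0$, $a\wedge b\geq0$, and $|a|=a$, $|b|=b$, so \eqref{Eq:jjmmseql} and~\eqref{Eq:jjmmseqr} give
\[
\mu(\seql{a}_G\vee\seql{b}_G)=\mu(\seql{a+b}_G)=\seqr{a+b}_A=\seqr{a}_A\vee\seqr{b}_A
\]
and
\[
\mu(\seql{a}_G\cap\seql{b}_G)=\mu(\seql{a\wedge b}_G)=\seqr{a\wedge b}_A=\seqr{a}_A\cap\seqr{b}_A\,,
\]
together with $\mu(\seql{0}_G)=\seqr{0}_A$, the zero of~$\Idrc{A}$. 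Hence $\mu\colon\Idlc{G}\twoheadrightarrow\Idrc{A}$ is a surjective $0$-lattice homomorphism, which already proves the statement, and the closing assertion about homomorphic images, for arbitrary, not necessarily unital,~$A$.

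Finally, the part I expect to require the most care is producing an \emph{order-unit} when~$A$ is unital; note that the surjection above forbids an order-unit in the non-unital case, since then~$\Idrc{A}$ has no top. Here I would replace~$G$ by the lexicographic product $G\eqdef\ZZ\lextimes A$ (with~$\ZZ$ dominant and~$A$ the underlying additive \lgrp), which is an Abelian \lgrp\ with order-unit~$(1,0)$. A direct computation of its finitely generated $\ell$-ideals should show that $\seql{(0,a)}_G=\set{0}\times\seql{a}_A$ for $a\in A^+$, whereas $\seql{(m,a)}_G=G$ as soon as $m\geq1$, because then $(1,0)\leq 2(m,a)$, so the order-unit lies in the ideal. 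Thus~$\Idlc{G}$ is a copy of~$\Idlc{A}$ with a top element~$G$ adjoined, and I would define~$\mu$ to coincide with the previous map on the lower part and to send~$G$ to the top $\seqr{1}_A=A$ of~$\Idrc{A}$; since meets and joins with the respective top elements are preserved automatically, $\mu$ is again a surjective $0$-lattice homomorphism, now from the $\ell$-ideal lattice of an \lgrp\ \emph{with order-unit}. The one genuinely computational step is the identification of $\Idlc(\ZZ\lextimes A)$; everything else is a matching of the join/meet formulas~\eqref{Eq:jjmmseql} and~\eqref{Eq:jjmmseqr}.
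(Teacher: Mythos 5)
Your argument is correct, and for the general (possibly non-unital) case it coincides with the paper's: both take~$G$ to be the underlying additive \lgrp\ of~$A$ and verify that $\seql{x}\mapsto\seqr{x}$ is a surjective $0$-lattice homomorphism by matching~\eqref{Eq:jjmmseql} with~\eqref{Eq:jjmmseqr}. Where you genuinely diverge is the order-unit refinement. The paper stays \emph{inside}~$A$: it takes~$G$ to be the $\ell$-subgroup $\setm{x\in A}{(\exists n<\go)(-n\cdot1\leq x\leq n\cdot1)}$, for which~$1$ is an order-unit, and the only nontrivial point is then surjectivity of~$\mu$, recovered from the multiplicative identity $x=(x\vee1)\cdot(x\wedge1)$, which yields $\seqr{x}=\seqr{x\wedge1}$ with $x\wedge1\in G$. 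You instead \emph{enlarge}~$A$ to $\ZZ\lextimes A$, identify $\Idlc(\ZZ\lextimes A)$ as a copy of~$\Idlc{A}$ with a top adjoined (your computation of the principal $\ell$-ideals of the lexicographic product is correct, and the paper uses the same device, lexicographic products over a totally ordered first factor, in Section~\ref{S:NonrRepr}), and send the new top to $\seqr{1}_A=A$; this makes surjectivity immediate and uses no multiplicative structure beyond $\seqr{1}_A=A$, at the price of that extra computation. Both routes are sound. One small caveat: your parenthetical claim that~$\Idrc{A}$ has no top element when~$A$ is non-unital is not correct in general --- $A=\seqr{u}_A$ can hold for some $u\in A^+$ without~$A$ being unital, which is exactly the situation handled in the proof of Corollary~\ref{C:BrS2RS2}; the paper only asserts the implication in the other direction. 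This side remark plays no role in your proof, so nothing needs repair.
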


\begin{proof}
Denote by~$G$ the underlying additive \lgrp\ of~$A$.
It is easy to verify that the map $\Idlc{G}\twoheadrightarrow\Idrc{A}$, $\seql{x}\mapsto\seqr{x}$ is a well defined lattice homomorphism (use~\eqref{Eq:jjmmseql} and~\eqref{Eq:jjmmseqr}).
It is, of course, surjective.

Now suppose that~$A$ is unital.
Since the multiplicative unit~$1$ of~$A$ may not be an order-unit of~$A$, the construction above of~$G$ must be modified.
To this end, define~$G$ as the underlying \lgrp\ of $\setm{x\in A}{(\exists n<\go)(-n\cdot 1\leq x\leq n\cdot 1)}$, and define again $\mu\colon\seql{x}\mapsto\seqr{x}$.
We need to prove that~$\mu$ is surjective.
For every $x\in A^+$, the relations $x=(x\vee1)\cdot(x\wedge1)$ and $0\leq x\wedge1\leq x$ imply that~$x$ and~$x\wedge1$ generate the same $\ell$-ideal, and thus, \emph{a fortiori}, $\seqr{x}=\seqr{x\wedge1}=\mu(\seql{x\wedge1})$.
\end{proof}

By applying Theorem~\ref{T:StoneDual} and Remark~\ref{Rk:StoneDual} to Proposition~\ref{P:Idl2Idr}, we thus get

\begin{corollary}\label{C:Idl2Idr}
The Brumfiel spectrum of any commutative unital \fring\ \pup{thus also every real spectrum} is a spectral subspace of the $\ell$-spectrum of some Abelian \lgrp\ with order-unit.
\end{corollary}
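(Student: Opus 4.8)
The plan is to deduce this corollary purely formally from Stone duality, by dualizing the surjective lattice homomorphism produced in Proposition~\ref{P:Idl2Idr}. First I would start from an arbitrary commutative unital \fring~$A$ and apply the unital case of Proposition~\ref{P:Idl2Idr}, obtaining an Abelian \lgrp~$G$ \emph{with order-unit} together with a surjective $0$-lattice homomorphism $\mu\colon\Idlc{G}\twoheadrightarrow\Idrc{A}$.

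Next I would identify the two lattices as the Stone duals of the relevant spectra: by Proposition~\ref{P:IdlcG} one has $\Idlc{G}\cong\cKo(\Specl{G})$, and by Proposition~\ref{P:IdrcABr} one has $\Idrc{A}\cong\cKo(\Specb{A})$. Since~$A$ is unital, $\Idrc{A}$ has a top element, and since~$G$ has an order-unit, $\Idlc{G}$ has a top element as well; hence $\mu$ is a surjection of bounded lattices and is therefore automatically cofinal, so that the functoriality of Theorem~\ref{T:StoneDual} applies to it.

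Then, by Remark~\ref{Rk:StoneDual}, the surjective lattice homomorphism~$\mu$ dualizes to a spectral embedding $\Spec{\mu}\colon\Spec{\Idrc{A}}\hookrightarrow\Spec{\Idlc{G}}$, realizing $\Spec{\Idrc{A}}$ as a spectral subspace of $\Spec{\Idlc{G}}$. Transporting this embedding along the homeomorphisms $\Specb{A}\cong\Spec{\Idrc{A}}$ and $\Specl{G}\cong\Spec{\Idlc{G}}$ furnished by Theorem~\ref{T:Stone}, I conclude that $\Specb{A}$ is a spectral subspace of $\Specl{G}$, the $\ell$-spectrum of an Abelian \lgrp\ with order-unit. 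The parenthetical claim about real spectra then follows immediately from Corollary~\ref{C:BrS2RS1}, which identifies the real spectra of commutative unital rings with the Brumfiel spectra of commutative unital \fring{s}.

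The only point requiring a moment's care — the ``main obstacle,'' such as it is — is the bookkeeping of the contravariance of Stone duality: one must check that it is precisely the \emph{surjective} lattice homomorphisms, and not arbitrary ones, that dualize to spectral \emph{embeddings}, and that the cofinality hypothesis of Theorem~\ref{T:StoneDual} is met. Both are handled cleanly because the lattices in play are bounded, so there is no genuine difficulty beyond citing Remark~\ref{Rk:StoneDual} in the correct direction.
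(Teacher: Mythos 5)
Your proposal is correct and follows essentially the same route as the paper, which derives the corollary by applying Theorem~\ref{T:StoneDual} and Remark~\ref{Rk:StoneDual} to the surjective homomorphism $\mu\colon\Idlc{G}\twoheadrightarrow\Idrc{A}$ of Proposition~\ref{P:Idl2Idr} (with the real-spectrum case handled via Corollary~\ref{C:BrS2RS1}, as you do). The only cosmetic difference is that you spell out the cofinality check, which is automatic for any surjection.
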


Moving to \fring{s}, a \emph{mutatis mutandis} modification of the proof of Lemma~\ref{L:ellLift}, using Lemma~\ref{L:FactClosedRng} instead of Lemma~\ref{L:FactClosed}, leads to the following result.

\begin{lemma}\label{L:effLift}
Let~$A$ and~$B$ be distributive lattices with zero, with~$A$ countable, and let $\gf\colon A\to B$ be a non-\brep\ $0$-lattice homomorphism.
Then for any set~$I$, if the lattice homomorphism~$\gf$ is \brep, then the distributive lattice~$\Cond(\gf,I)$ is \brep.
If~$I$ is uncountable, then the converse holds.
\end{lemma}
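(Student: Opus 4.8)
The plan is to reproduce the proof of Lemma~\ref{L:ellLift} essentially verbatim, systematically replacing the \lgrp\ functor~$\Idlc$ by the \fring\ functor~$\Idrc$, the generated $\ell$-ideals~$\seql{-}$ by the generated radical $\ell$-ideals~$\seqr{-}$, and the factorization Lemma~\ref{L:FactClosed} by its \fring\ counterpart Lemma~\ref{L:FactClosedRng}. Two standing facts underlie everything: the class of commutative \fring{s} is closed under nonempty finite direct products and directed colimits, so that condensates of \fring{s}, presented as in Lemma~\ref{L:CondasColimit}, are again commutative \fring{s}; and the functor~$\Idrc$ commutes with nonempty finite direct products and directed colimits, exactly as invoked in the proof of Theorem~\ref{T:RCRnotlrep}. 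For the forward implication, suppose~$\gf$ is \brep, say $\Idrc{f}\cong\gf$ (as arrows) for some \fring\ homomorphism $f\colon R\to T$ between commutative \fring{s}. Applying~$\Idrc$ to the presentation $\Cond(f,I)=\varinjlim_{J\subseteq I\text{ finite}}\pI{R\times T^J}$ and using preservation of finite products and directed colimits gives $\Idrc\Cond(f,I)\cong\Cond(\gf,I)$; since $\Cond(f,I)$ is a commutative \fring, this exhibits $\Cond(\gf,I)$ as \brep.

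For the converse, assume~$I$ is uncountable and that $E\eqdef\Cond(\gf,I)$ is \brep; fix a commutative \fring~$R$ and an isomorphism $\eps\colon\Idrc R\to E$, and let $p\colon E\twoheadrightarrow A$ and $q_i\colon E\twoheadrightarrow B$ $(i\in I)$ be the canonical projections. Each projection out of the condensate is a closed $0$-lattice homomorphism; this is a statement about lattices alone, identical to the one used in Lemma~\ref{L:ellLift}. Hence, setting $U\eqdef\setm{x\in R}{(p\circ\eps)\pI{\seqr{x}_R}=0}$ and $V_i\eqdef\setm{x\in R}{(q_i\circ\eps)\pI{\seqr{x}_R}=0}$, Lemma~\ref{L:FactClosedRng} shows that~$U$ and the~$V_i$ are radical $\ell$-ideals of~$R$ and supplies isomorphisms $\ga\colon\Idrc(R/U)\to A$ and $\gb_i\colon\Idrc(R/V_i)\to B$ satisfying the exact analogues of~\eqref{Eq:pe=au} and~\eqref{Eq:qie=avi}.

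Since~$A$ is countable, a standard L\"owenheim-Skolem argument yields a countable \fsubring~$S$ of~$R$ for which~$\ga$ restricts to an isomorphism $\ga'\colon\Idrc(S/U)\to A$, where $S/U\eqdef\setm{x+U}{x\in S}$. Countability of~$S$, together with the fact that each~$\eps\pI{\seqr{x}_R}$ lies in~$E$, furnishes a countable $J\subseteq I$ with $(\gf\circ p)\pI{\eps\pI{\seqr{x}_R}}=q_i\pI{\eps\pI{\seqr{x}_R}}$ for all $x\in S$ and all $i\in I\setminus J$; since~$I$ is uncountable we may pick $i\in I\setminus J$. The computation of Lemma~\ref{L:ellLift} then yields $S\cap U\subseteq V_i$, so that $x+U\mapsto x+V_i$ is a well-defined \fring\ homomorphism $f\colon S/U\to R/V_i$, and the same diagram chase, now carried out with~$\Idrc$ and~$\seqr{-}$, establishes $\gf\circ\ga'=\gb_i\circ\Idrc{f}$. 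Thus~$f$ represents~$\gf$, so~$\gf$ is \brep.

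The one genuinely new point, relative to the $\ell$-group argument, is that the L\"owenheim-Skolem substructure must be closed under the ring multiplication in addition to the lattice-group operations, i.e.\ taken to be an \fsubring~$S$; this is precisely what keeps $S/U$, $R/V_i$, and the induced map~$f$ inside the category of commutative \fring{s}, so that~$\Idrc{f}$ is defined and Lemma~\ref{L:FactClosedRng} is applicable. I expect this to be the only step demanding any care; once it is in place, every remaining step transfers unchanged from the proof of Lemma~\ref{L:ellLift}.
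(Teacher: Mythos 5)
Your proposal is correct and is exactly the argument the paper intends: the paper itself gives no separate proof of Lemma~\ref{L:effLift}, stating only that it follows by a \emph{mutatis mutandis} modification of the proof of Lemma~\ref{L:ellLift} with Lemma~\ref{L:FactClosedRng} in place of Lemma~\ref{L:FactClosed}, which is precisely what you carry out. Your added remark that the L\"owenheim-Skolem substructure must be taken to be a countable \fsubring{} (so that $S/U$, $R/V_i$, and the induced map stay in the category of commutative \fring{s}) correctly identifies the only point where the translation requires any attention.
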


\begin{theorem}\label{T:NonHomrealSpec}
There exists a \rrep\ distributive lattice~$E$, of cardinality~$\aleph_1$, with a non-\brep\ \pup{thus non-\rrep} homomorphic image.
\end{theorem}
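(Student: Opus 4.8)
The plan is to follow the pattern of Theorem~\ref{T:RCRnotlrep} and Remark~\ref{Rk:RCRnotlrep}, but with Brumfiel-representability in place of $\ell$-representability. There the obstruction was that \lrep\ arrows are \emph{closed}; here the relevant obstruction is that, by Lemma~\ref{L:BrumConvex}, every \brep\ arrow is \emph{convex} in the sense of Definition~\ref{D:SpecConvex} (and convexity is clearly invariant under isomorphism of arrows). So it suffices to exhibit a \emph{non-convex} $0$-lattice homomorphism $f$ between finite lattices which is, at the same time, a surjective image in the arrow category of a \brep\ arrow; amplifying $f$ by an uncountable condensate will then produce a \rrep\ lattice $E$ together with a non-\brep\ quotient.

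Concretely, I would take $A\eqdef\four=\{0<a_1<a_2<1\}$ and $B\eqdef\three=\{0<b_1<1\}$, and let $f\colon A\to B$ be the $0,1$-lattice homomorphism with $f(a_1)=f(a_2)=b_1$. Writing the proper prime ideals of $A$ as $P_0=\{0\}\subsetneq P_1=\{0,a_1\}\subsetneq P_2=\{0,a_1,a_2\}$ and those of $B$ as $Q_0=\{0\}\subsetneq Q_1=\{0,b_1\}$, one computes $f^{-1}[Q_0]=P_0$ and $f^{-1}[Q_1]=P_2$. Taking $P\eqdef P_1$, the proper ideal $J\eqdef Q_1$ of $B$, and $Q_0$ itself, the hypotheses $Q_0\subseteq J$ and $P_0=f^{-1}[Q_0]\subseteq P_1\subseteq f^{-1}[J]=P_2$ hold, yet no prime $Q$ with $Q_0\subseteq Q\subseteq J$ satisfies $f^{-1}[Q]=P_1$ (the only candidates $Q_0,Q_1$ give $P_0,P_2$). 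Hence $f$ is not convex, so by Lemma~\ref{L:BrumConvex} it is not \brep. Moreover $f$ is surjective, so the pair $(\id_A,f)$ of surjective homomorphisms is a morphism, in the arrow category, from the arrow $\id_A$ to the arrow $f$, both of its components being surjective.

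Next I would set $E\eqdef\Cond(\id_A,\go_1)$. Since the finite chain $A=\four$ is \rrep\ (for $\three$ this is recorded in Remark~\ref{Rk:RCRnotlrep}, and finite chains follow by iterating that construction with higher-rank convex valuations), it is in particular \brep, so $\id_A$ is a \brep\ arrow; by Lemma~\ref{L:effLift} the condensate $E$ is \brep, and being bounded it is \rrep\ by Corollary~\ref{C:BrS2RS2}. A direct count, using that the finite subsets of $\go_1$ number $\aleph_1$ and $A$ is finite, gives $\card E=\aleph_1$. The morphism $(\id_A,f)$ induces coordinatewise a homomorphism $\Cond(\id_A,\go_1)\to\Cond(f,\go_1)$, which is onto because $f$ is: given an element of $\Cond(f,\go_1)$, keep its generic value and lift its finitely many non-generic coordinates through the surjection $f$. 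Thus $E'\eqdef\Cond(f,\go_1)$ is a homomorphic image of $E$. Finally, since $\go_1$ is uncountable and $A$ is countable, the converse half of Lemma~\ref{L:effLift} shows that $E'$ being \brep\ would force $f$ to be \brep, which it is not; hence $E'$ is non-\brep, and \emph{a fortiori} non-\rrep, every \rrep\ lattice being \brep.

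The step requiring the most care is the choice of $f$: the crux is that convexity, unlike being a $0$-lattice homomorphism, is \emph{not} inherited by arrow-quotients, which is exactly what allows a non-convex $f$ to sit below the manifestly convex (indeed identity) arrow $\id_A$. The finite computation above is what makes this asymmetry explicit, and one must verify both that $f$ is genuinely a lattice homomorphism and that the witness $(P_1,Q_0,Q_1)$ really defeats convexity. A secondary point to pin down is that the chain $\four$ is truly \rrep\ (equivalently \brep), since it is this, via Corollary~\ref{C:BrS2RS2}, that upgrades $E$ from merely \brep\ to \rrep.
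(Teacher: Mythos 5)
Your proposal is correct and follows essentially the same route as the paper: the same non-convex surjection $\four\twoheadrightarrow\three$ ruled out by Lemma~\ref{L:BrumConvex}, the same amplification by an uncountable condensate via Lemma~\ref{L:effLift}, and the same coordinatewise surjection onto $\Cond(\gf,\go_1)$ (your $E=\Cond(\id_{\four},\go_1)$ is precisely the paper's lattice of almost constant $\go_1$-sequences in~$\four$). The only point you leave informal --- that $\four$ is \rrep\ --- is exactly what the paper settles by citing Dickmann, Gluschankof, and Lucas for a countable real-closed domain with exactly three prime ideals.
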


\begin{proof}
It follows from Dickmann, Gluschankof, and Lucas \cite[Proposition~1.1]{DicGlu2000} that there exists a real-closed domain~$A$ with exactly three prime ideals $P_1\subset P_2\subset P_3$.
Hence, $\Idrc{A}$ is isomorphic to the four-element chain $\four\eqdef\set{0,1,2,3}$.
By a simple L\"owenheim-Skolem type argument, $A$ may be taken countable.
Denote by~$R$ the ring of all almost constant families $\vecm{x_{\xi}}{\xi<\go_1}$ of elements of~$A$.
Then the lattice~$E$ of all almost constant $\go_1$-sequences of elements in~$\four$ is isomorphic to~$\Idrc{R}$, thus it is \rrep.

Consider the chains $\three\eqdef\set{0,1,2}$ and $\four\eqdef\set{0,1,2,3}$.
The (surjective) dual homomorphism $\gf\colon\four\twoheadrightarrow\three$, of the map $\set{1,2}\to\set{1,2,3}$ sending~$1$ to~$1$ and~$2$ to~$3$, is not convex, thus, by Lemma~\ref{L:BrumConvex}, not \brep.
By Lemma~\ref{L:effLift}, it follows that the lattice $\Cond(\gf,\go_1)
$ is not \brep.

On the other hand, the assignment $\vecm{x_{\xi}}{\xi<\go_1}\mapsto(x_{\infty},\vecm{\gf(x_{\xi})}{\xi<\go_1})$ defines a surjective lattice homomorphism from~$E$ onto $\Cond(\gf,\go_1)$.
\end{proof}

By Stone duality (cf. Theorem~\ref{T:StoneDual} and Remark~\ref{Rk:StoneDual}), it follows that the spectrum of the bounded distributive lattice $\Cond(\gf,\go_1)$ witnesses the following corollary.

\begin{corollary}\label{C:NonHomrealSpec}
There exists a real spectrum with a spectral subspace which is not a Brumfiel spectrum \pup{thus also not a real spectrum}.
\end{corollary}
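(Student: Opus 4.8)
The plan is to read this corollary off from Theorem~\ref{T:NonHomrealSpec} by a direct application of Stone duality, since all the substantive work has already been carried out upstream. Concretely, I would take the \rrep\ lattice~$E$ supplied by Theorem~\ref{T:NonHomrealSpec}, together with its surjective lattice homomorphism $\pi\colon E\twoheadrightarrow F$ onto the non-\brep\ homomorphic image $F\eqdef\Cond(\gf,\go_1)$.

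First, since~$E$ is \rrep, the spectrum~$\Spec{E}$ is (homeomorphic to) a real spectrum. By Remark~\ref{Rk:StoneDual}, the surjection~$\pi$ dualizes to the spectral map $\Spec{\pi}\colon\Spec{F}\to\Spec{E}$, which is a spectral embedding; that is, it realizes~$\Spec{F}$ as a spectral subspace of the real spectrum~$\Spec{E}$. This already exhibits the ambient real spectrum together with the distinguished spectral subspace claimed in the statement.

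Next, I would argue that this spectral subspace~$\Spec{F}$ is neither a Brumfiel spectrum nor a real spectrum. Because~$F$ is not \brep, its spectrum~$\Spec{F}$ is, by the very definition of Brumfiel-representability, not homeomorphic to any Brumfiel spectrum. By Corollary~\ref{C:BrS2RS1}, the class of Brumfiel spectra coincides with the class of real spectra, so~$\Spec{F}$ fails to be a real spectrum as well, which completes the argument.

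There is essentially no obstacle at this final stage: the corollary is a purely formal consequence of Stone duality applied to the already-constructed non-\brep\ homomorphic image. The genuine difficulty lies entirely in Theorem~\ref{T:NonHomrealSpec}, whose proof combines the non-convexity, hence (\emph{via} Lemma~\ref{L:BrumConvex}) the non-Brumfiel-representability, of the chain map $\gf\colon\four\twoheadrightarrow\three$ with the condensate lifting principle of Lemma~\ref{L:effLift}.
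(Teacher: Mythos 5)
Your proposal is correct and follows exactly the paper's own (one-line) argument: the paper likewise reads the corollary off from Theorem~\ref{T:NonHomrealSpec} via Stone duality (Theorem~\ref{T:StoneDual} and Remark~\ref{Rk:StoneDual}), with the spectrum of $\Cond(\gf,\go_1)$ serving as the witness. Your added appeal to Corollary~\ref{C:BrS2RS1} for the parenthetical ``thus also not a real spectrum'' is a valid (if slightly stronger than necessary) justification, since only the inclusion of real spectra among Brumfiel spectra from Theorem~\ref{T:RS2BS} is needed.
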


\section{An \lrep, non-Brumfiel representable lattice}\label{S:NonrRepr}

Although the proof of the present section's main negative result, Theorem~\ref{T:NonrReprlGrp}, arises from a lattice-theoretical investigation of the argument of Delzell and Madden \cite[Lemma~2]{DelMad1994}, the construction of its counterexample, which is the \lgrp\ that we will denote by~$G_{\go_1^{\op}}$\,, is somehow simpler.
Moreover, the constructions of Delzell and Madden~\cite{DelMad1994}, Mellor and Tressl~\cite{MelTre2012} yield lattices with~$2^{\aleph_1}$ elements \emph{a priori}, while our construction yields the smaller size~$\aleph_1$.

\begin{notation}\label{Not:FGgL}
We denote by~$F$ the free Abelian \lgrp\ defined by generators~$a$ and~$b$, subjected to the relations $a\geq0$ and~$b\geq0$.
Moreover, we set $G_{\gL}\eqdef\ZZ\seq{\gL}\lextimes F$, for any chain~$\gL$.
\end{notation}

Since~$\ZZ\seq{\gL}$ is a totally ordered group and~$F$ is an Abelian \lgrp, $G_{\gL}$ is also an Abelian \lgrp.
It has an order-unit if{f}~$\gL$ has a largest element.
We shall occasionally identify~$F$ with the $\ell$-ideal $\set{0}\times F$ of~$G_{\gL}$.

\begin{lemma}\label{L:a+bindec}
The $\ell$-ideal $\seql{a+b}$ is directly indecomposable in the lattice~$\Idlc{G_{\gL}}$.
That is, there are no nonzero $\bx,\by\in\Idlc{G_{\gL}}$ such that $\seql{a+b}=\bx\vee\by$ and $\bx\cap\by=\set{0}$.
\end{lemma}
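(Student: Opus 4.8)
The plan is to reduce the statement to the \emph{direct indecomposability of the free factor~$F$}, and then to settle the latter by a connectedness argument on the space of real-valued \lhom{s} of~$F$.

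First I would pin down the ideal $\seql{a+b}$ explicitly. Since $0\leq a\leq a+b$ and $0\leq b\leq a+b$, both generators lie in the $\ell$-ideal of~$F$ generated by $a+b$; as $a,b$ generate~$F$ as an \lgrp, it follows that $a+b$ is an order-unit of~$F$, that is, $\seql{a+b}_F=F$. Working in $G_{\gL}=\ZZ\seq{\gL}\lextimes F$, a direct computation with the lexicographic order shows that $\bigl|(u,v)\bigr|\leq n\cdot(0,a+b)$ forces $u=0$ (the first coordinate of an absolute value is $\geq0$, while that of $(0,a+b)$ is~$0$); the same computation applies to any $x\in F^+$, so $\seql{(0,x)}_{G_{\gL}}=\set{0}\times\seql{x}_F$, and in particular $\seql{a+b}_{G_{\gL}}=\set{0}\times F$.

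Now suppose, towards a contradiction, that $\seql{a+b}=\mathbf{x}\vee\mathbf{y}$ with $\mathbf{x}\cap\mathbf{y}=\set{0}$ and $\mathbf{x},\mathbf{y}$ nonzero in $\Idlc{G_{\gL}}$. Writing $\mathbf{x}=\seql{c}$ with $c\in G_{\gL}^+$ and using $\mathbf{x}\subseteq\set{0}\times F$, we get $c=(0,x)$ for some $x\in F^+$, and likewise $\mathbf{y}=\seql{(0,y)}$ with $y\in F^+$. By~\eqref{Eq:jjmmseql} the relation $\mathbf{x}\cap\mathbf{y}=\set{0}$ becomes $x\wedge y=0$, while $\mathbf{x}\vee\mathbf{y}=\set{0}\times\seql{x+y}_F=\set{0}\times F$ becomes ``$x+y$ is an order-unit of~$F$''; moreover $x,y\neq0$. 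Thus everything reduces to the following claim: \emph{in~$F$, no order-unit can be written as $x+y$ with $x,y\in F^+$ nonzero and $x\wedge y=0$}; equivalently, $F$ is directly indecomposable.

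To prove the claim I would use the faithful representation of the free Abelian \lgrp~$F$ by \lhom{s} into~$\RR$. By freeness these are exactly the evaluation maps $\gf_{(\ga,\gb)}\colon F\to\RR$, $a\mapsto\ga$, $b\mapsto\gb$, for $(\ga,\gb)$ in the first quadrant $Q\eqdef\setm{(\ga,\gb)\in\RR^2}{\ga,\gb\geq0}$; for fixed $f\in F$ the map $(\ga,\gb)\mapsto\gf_{(\ga,\gb)}(f)$ is continuous, the family $\pI{\gf_{(\ga,\gb)}}_{(\ga,\gb)\in Q}$ is separating, and $\gf_{t(\ga,\gb)}=t\,\gf_{(\ga,\gb)}$ for $t>0$ (Baker--Beynon). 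Consider the segment $S\eqdef\setm{(\ga,\gb)\in Q}{\ga+\gb=1}$; each $\gf_p$ with $p\in S$ is nonzero, and any nonzero \lhom\ sends an order-unit to a strictly positive element. Given an orthogonal decomposition $x\wedge y=0$ of an order-unit $x+y$, set $U\eqdef\setm{p\in S}{\gf_p(x)>0}$ and $V\eqdef\setm{p\in S}{\gf_p(y)>0}$. Applying $\gf_p$ to $x\wedge y=0$ gives $\min\pI{\gf_p(x),\gf_p(y)}=0$, so $U\cap V=\es$; applying it to $x+y$ gives $\gf_p(x)+\gf_p(y)>0$, so $U\cup V=S$; and $U,V$ are open by continuity. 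Since~$S$ is a closed interval, hence connected, one of $U,V$ is empty, say $U=\es$. Then $\gf_p(x)=0$ for all $p\in S$, thus, by homogeneity and faithfulness of the representation, $x=0$, contradicting $x\neq0$. This proves the claim, and with it the lemma.

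The bookkeeping of the reduction (the lexicographic computation and the passage through~\eqref{Eq:jjmmseql}) is routine; the conceptual heart, and the step I expect to require the most care, is the representation of~$F$: one must isolate the connected parameter space~$S$ and verify that the representation is both separating and order-determining, so that orthogonality together with the order-unit condition becomes a partition of~$S$ into two disjoint open sets. Connectedness of~$S$ then does all the work.
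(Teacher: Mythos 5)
Your proof is correct and follows essentially the same route as the paper: reduce to the free factor~$F$ (the paper does this by noting that~$F$ is an $\ell$-ideal of~$G_{\gL}$, you by computing $\seql{a+b}_{G_{\gL}}=\set{0}\times F$ directly), then invoke the Baker--Beynon representation of~$F$ over the closed first quadrant and conclude by connectedness. The only cosmetic difference is that the paper phrases the connectedness step via the lattice embedding $\seql{x}\mapsto\bck{x\neq0}$ and the convexity of the punctured quadrant, whereas you use point evaluations, the unit segment, and a clopen partition; these are the same argument.
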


\begin{proof}
Since $a,b\in F$ and~$F$ is an ideal of~$G_{\gL}$, it suffices to prove that~$\seql{a+b}$ is directly indecomposable in the lattice~$\Idlc{F}$.
The right closed upper quadrant
 \[
 \gO\eqdef\setm{(x,y)\in\RR^2}{x\geq0\text{ and }y\geq0}
 \]
is a convex subset of~$\RR^2$.
Further, by the Baker-Beynon duality (cf. Baker~\cite{Baker1968}, Beynon \cite{Beyn1975,Beyn1977}), $F$ is isomorphic to the $\ell$-subgroup of~$\RR^{\gO}$ generated by the canonical projections $a\colon(x,y)\mapsto x$ and $b\colon(x,y)\mapsto y$, and there exists a unique lattice embedding~$\iota$ from~$\Idlc{F}$
to the lattice of all relative open subsets of~$\gO$ such that $\iota(\seql{x})=\bck{x\neq0}$ whenever $x\in F$.
Hence, in order to prove that~$\seql{a+b}$ is indecomposable in~$\Idlc{F}$, it suffices to prove that $\bck{a+b\neq0}$ is a connected subset of~$\RR^2$.
This, in turn, follows from the relation $\bck{a+b\neq0}=\setm{(x,y)\in\gO}{x+y>0}$, which implies that $\bck{a+b\neq0}$ is a convex (thus connected) subset of~$\gO$, thus of~$\RR^2$.
\end{proof}

\begin{lemma}\label{L:Asymp}
Let~$\gL$ be a chain.
Then every pairwise orthogonal subset of~$G_{\gL}^{++}$ is countable.
\end{lemma}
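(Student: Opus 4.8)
The plan is to reduce the statement to the free $\ell$-group~$F$ and then exploit the Baker--Beynon representation already set up in the proof of Lemma~\ref{L:a+bindec}; the whole content then condenses into one geometric fact, namely that $\RR^2$ satisfies the countable chain condition.

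First I would analyze orthogonality inside the lexicographical product $G_{\gL}=\ZZ\seq{\gL}\lextimes F$. Writing elements as pairs $(s,w)$ with $s\in\ZZ\seq{\gL}$ and $w\in F$, the meet is the lexicographically smaller pair when the first coordinates differ, and equals $(s,w\wedge u)$ when both first coordinates equal~$s$. Suppose now $x=(s,w)$ and $y=(t,u)$ both lie in $G_{\gL}^{++}$ with $x\wedge y=0$. If $s\neq t$, then $x\wedge y$ is one of $x,y$, forcing that element to be~$0$, a contradiction; hence $s=t$, and then $(s,w\wedge u)=0$ yields $s=t=0$ together with $w\wedge u=0$. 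Since a pair with first coordinate~$0$ is strictly positive exactly when its second coordinate is, this shows $w,u\in F^{++}$. Consequently, any pairwise orthogonal subset of $G_{\gL}^{++}$ with at least two elements is contained in the $\ell$-ideal $\set{0}\times F\cong F$ and its members remain pairwise orthogonal there; so it suffices to prove that every pairwise orthogonal subset of~$F^{++}$ is countable.

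Next I would translate orthogonality in~$F$ into disjointness of supports, using the representation recalled in Lemma~\ref{L:a+bindec}: $F$ is realized as an $\ell$-subgroup of~$\RR^{\gO}$ on the quadrant~$\gO$, and there is a lattice embedding $\iota\colon\Idlc{F}\to\set{\text{relative open subsets of }\gO}$ with $\iota(\seql{w})=\bck{w\neq0}$ for all $w\in F$. If $w,u\in F^{++}$ satisfy $w\wedge u=0$, then, since finite meets in the lattice of open sets are intersections, \eqref{Eq:jjmmseql} and the injectivity of~$\iota$ give $\bck{w\neq0}\cap\bck{u\neq0}=\iota(\seql{w})\cap\iota(\seql{u})=\iota(\seql{w\wedge u})=\iota(\seql{0})=\es$; moreover each support is nonempty because $w,u\neq0$. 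Hence, along a pairwise orthogonal family in~$F^{++}$, the supports form a family of pairwise disjoint nonempty relative open subsets of~$\gO$, and the assignment $w\mapsto\bck{w\neq0}$ is injective on that family.

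Finally I would invoke second countability. As a subspace of~$\RR^2$, the quadrant~$\gO$ is second countable, so every nonempty relative open subset of~$\gO$ meets the countable dense set $\gO\cap\QQ^2$; choosing one rational point in each support and using disjointness embeds the family of supports into~$\QQ^2$, whence that family, and therefore the original orthogonal family, is countable. The only steps requiring care are the bookkeeping in the reduction to~$F$ and the verification that the support map is injective on an orthogonal family (which is exactly where nonemptiness of the supports is used); the topological heart of the argument is the routine countable chain condition for~$\RR^2$, so I expect no serious obstacle.
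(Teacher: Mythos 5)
Your proof is correct, and its first half---the reduction showing that two orthogonal elements of $G_{\gL}^{++}$ must both lie in $\set{0}\times F$ because the first coordinates are forced to coincide and then vanish---is exactly the paper's opening step. Where you diverge is in how you finish: the paper simply observes that $F$, the free Abelian \lgrp\ on two generators, is a \emph{countable set}, so any subset of it (orthogonal or not) is countable, and the lemma is done in one line. You instead invoke the Baker--Beynon realization of~$F$ inside~$\RR^{\gO}$, translate orthogonality of positive elements into disjointness of their nonempty open supports, and conclude by separability of~$\gO\subseteq\RR^2$. That argument is correctly executed (for nonnegative real-valued functions, $w\wedge u=0$ is indeed pointwise disjointness of supports, and injectivity of the representation gives nonemptiness), and it buys some genuine robustness: it would still bound orthogonal families in an \emph{uncountable} $\ell$-group of functions on a separable space. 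For the lemma at hand, though, it is heavier machinery than needed---the cardinality of~$F$ already settles the matter.
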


\begin{proof}
Since~$\ZZ\seq{\gL}$ is a chain, every pairwise orthogonal subset~$X$ of~$G_{\gL}^{++}$, with more than one element, is a subset of~$F$.
The latter being countable, $X$ is countable.
\end{proof}

\begin{theorem}\label{T:NonrReprlGrp}
There are no commutative \fring~$A$ and no surjective lattice homomorphism $\mu\colon\Idrc{A}\twoheadrightarrow\Idlc{G_{\go_1^{\op}}}$.
\end{theorem}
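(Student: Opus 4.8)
The plan is to argue by contradiction, extracting from a putative surjection $\mu\colon\Idrc{A}\twoheadrightarrow\Idlc{G_{\go_1^{\op}}}$ a configuration forbidden by Lemmas~\ref{L:a+bindec} and~\ref{L:Asymp}. Write $L\eqdef\Idlc{G_{\go_1^{\op}}}$ and $\bc\eqdef\seql{a+b}$. First I would isolate three structural features of~$L$. \emph{(a)} By Lemma~\ref{L:a+bindec}, $\bc$ is directly indecomposable; moreover, since $a+b$ is an order-unit of~$F$ and~$F$ is an $\ell$-ideal of $G_{\go_1^{\op}}$, the element $\bc$ coincides with the $\ell$-ideal~$F$ and meets every nonzero member of~$L$ (it is \emph{essential}). \emph{(b)} Identifying $\ZZ\seq{\go_1^{\op}}$ with its copy in $G_{\go_1^{\op}}$, the finitely generated $\ell$-ideals $\bw_{\xi}\eqdef\seql{c_{\xi}}$, for $\xi\in\go_1^{\op}$, form a strictly decreasing chain of length~$\aleph_1$, all lying above~$\bc$, whose infimum in~$L$ equals~$\bc$ (the supports collapse to~$\set{0}$). \emph{(c)} By Lemma~\ref{L:Asymp}, every family of pairwise orthogonal nonzero elements of~$L$ is countable; in particular $\Idlc{F}$ is countable, so there is very little room below~$\bc$.

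Next I would transport this data through~$\mu$ into $\Idrc{A}$. Since~$\mu$ is surjective, choose $U,V\in\Idrc{A}$ with $\mu(U)=\seql{a}$ and $\mu(V)=\seql{b}$, so that $\mu(U\vee V)=\bc$ while $\mu(U\wedge V)=\seql{a}\cap\seql{b}\neq 0$, and choose $W_{\xi}\in\Idrc{A}$ above $U\vee V$ with $\mu(W_{\xi})=\bw_{\xi}$. The ambient lattice $\Idrc{A}$ is completely normal, hence every pair of its elements splits; pushing such splittings forward through~$\mu$ produces, at each level~$\xi$, a splitting in~$L$ of a pair manufactured from $\seql{a}$, $\seql{b}$, and $\bw_{\xi}$, that is, an orthogonal pair $(\bx_{\xi},\by_{\xi})$ below~$\bc$ whose join approximates~$\bc$. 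The goal is to make these ``approximate splittings'' vary genuinely with~$\xi$ and then to amalgamate them.

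The decisive step, and the one I expect to be the main obstacle, is to convert the \emph{linear} order of the chain $(\bw_{\xi})$ into an honest \emph{orthogonal} decomposition of~$\bc$. This is exactly the point where complete normality alone is insufficient and the $f$-ring structure of~$A$ must enter: the multiplicative law $\seqr{x}\cap\seqr{y}=\seqr{xy}$ together with the power inequalities $|y|^{n}\leq x$ (the mechanism behind Lemma~\ref{L:1notinP} and the convexity of Lemma~\ref{L:BrumConvex}) should let one ``localize'' each successor step $\bw_{\xi}\supsetneq\bw_{\xi+1}$ to a new germ of~$\bc$, so that distinct steps contribute incompatible separations of the two halves $\seql{a},\seql{b}$. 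This is the lattice-theoretic reworking of Delzell and Madden~\cite[Lemma~2]{DelMad1994}, and the genuine difficulty is to carry it out while certifying that the contributions neither collapse to~$0$ under~$\mu$ nor trivially coincide.

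Finally, with the $\aleph_1$-indexed family of separations at the successor levels in hand, the countability statements of Lemma~\ref{L:Asymp} (equivalently, the countability of $\Idlc{F}$) force stabilization: uncountably many levels must realize the \emph{same} orthogonal pair, and the monotonicity along the chain then pushes its join all the way up to~$\bc$, yielding a nontrivial orthogonal decomposition $\bc=\bx\vee\by$ with $\bx\wedge\by=0$. This contradicts the indecomposability of $\bc=\seql{a+b}$ supplied by Lemma~\ref{L:a+bindec}, completing the argument. Thus the whole weight of the proof rests on the orthogonalization step of the preceding paragraph, where the passage from an Abelian $\ell$-group target to a commutative $f$-ring source is what creates the obstruction.
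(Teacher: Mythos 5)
Your overall frame---argue by contradiction, with Lemma~\ref{L:a+bindec} and Lemma~\ref{L:Asymp} as the two endpoints, the chain $\seql{c_\xi}$ pulled back through $\mu$, and the \fring\ multiplication as the source of the obstruction---matches the paper, and your observations (a)--(c) are correct. But there is a genuine gap precisely at the step you yourself flag as ``the main obstacle'': the orthogonalization is never carried out, and the endgame you describe points in the wrong direction. Pushing a splitting of $(U,V)$ in the completely normal lattice $\Idrc{A}$ forward through $\mu$ only yields a splitting of $(\seql{a},\seql{b})$ in $\Idlc{G_{\go_1^{\op}}}$---something that exists anyway, since that lattice is itself completely normal---and the join of such a splitting is $\seql{|a-b|}$, not $\seql{a+b}$. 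So no family of ``orthogonal pairs $(\bx_\xi,\by_\xi)$ below $\bc$'' obtained this way can be forced, by a stabilization argument, to produce a direct decomposition of $\seql{a+b}$; the mere existence of splittings in the target carries no contradiction.

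The paper's actual mechanism is the opposite of what you propose. One defines explicitly $x_\xi\eqdef(x+y)\wedge\pI{xz_0\sd(x+y)z_{\xi+1}}$ and $y_\xi\eqdef(x+y)\wedge\pI{(x+y)z_\xi\sd xz_0}$ (the ring products $xz_0$ and $(x+y)z_\xi$ are where the \fring\ structure enters, via $ca\sd cb=c(a\sd b)$ and $\seqr{a}\cap\seqr{b}=\seqr{ab}$), and proves four facts: $\bx_\xi\subseteq\seql{a}$, $\by_\xi\subseteq\seql{b}$, $\bx_\xi\vee\by_\xi=\seql{a+b}$, and $\bx_\xi\cap\by_\eta=\set{0}$ for $\xi<\eta$. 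Crucially, the \emph{within-level} meets $\bu_\xi\eqdef\bx_\xi\cap\by_\xi$ are \emph{nonzero}: if $\bu_\xi=\set{0}$, then $(\bx_\xi,\by_\xi)$ would directly decompose $\seql{a+b}$, which Lemma~\ref{L:a+bindec} forbids (one part would then be all of $\seql{a+b}$, contradicting $\bx_\xi\subseteq\seql{a}$ and $\by_\xi\subseteq\seql{b}$). The \emph{cross-level} orthogonality then makes $\vecm{\bu_\xi}{\xi<\go_1}$ an uncountable pairwise orthogonal family of nonzero ideals, and it is Lemma~\ref{L:Asymp} that delivers the final contradiction---not indecomposability. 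So the two lemmas are used in the reverse order from your sketch, and the missing construction of the $x_\xi,y_\xi$---together with the verification of the four properties above, which rests on $\seql{a+b}\subseteq\mu\seqr{z_\xi\sd z_\eta}$ for $\xi<\eta$ and $\seql{a+b}\cap\mu\seqr{z_\eta\sd z_\xi}=\set{0}$ for $\xi\leq\eta$---is the substance of the proof.
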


Until the end of the proof of Theorem~\ref{T:NonrReprlGrp}, we shall assume, by way of contradiction, that there are a commutative \fring~$A$ and a surjective lattice homomorphism $\mu\colon\Idrc{A}\twoheadrightarrow\Idlc{G_{\go_1^{\op}}}$.

Pick $x,y\in A^+$ such that $\seql{a}=\mu\seqr{x}$ and $\seql{b}=\mu\seqr{y}$.
Moreover, for each $\xi<\go_1$, pick $z_{\xi}\in A^+$ such that
 \begin{equation}\label{Eq:cga2zga}
 \seql{c_{\xi}}=\mu\seqr{z_{\xi}}\,.
 \end{equation}
In particular,
 \begin{equation}\label{Eq:cleqzga}
 \seql{a+b}\subseteq\mu\seqr{z_{\xi}}\,,\quad
 \text{whenever }\xi<\go_1\,.
 \end{equation}
Let $\xi<\eta<\go_1$.
Since $z_{\xi}\leq z_{\eta}+(z_{\xi}\sd z_{\eta})$, with~$z_{\xi}$, $z_{\eta}$, and~$z_{\xi}\sd z_{\eta}$ all in~$A^+$, we get
 \[
 \seqr{z_{\xi}}\subseteq
 \seqr{z_{\eta}}\vee\seqr{z_{\xi}\sd z_{\eta}}\,,
 \]
whence, applying the lattice homomorphism~$\mu$ and by~\eqref{Eq:cga2zga},
 \[
 c_{\xi}\in\seql{c_{\eta}}\vee
 \mu\seqr{z_{\xi}\sd z_{\eta}}\,.
 \]
Since $c_{\eta}\ll c_{\xi}$ (within~$G_{\go_1^{\op}}$), it follows that
 \[
 c_{\xi}\in\mu\seqr{z_{\xi}\sd z_{\eta}}\,.
 \]
Hence we obtain, \emph{a fortiori}, that
 \begin{equation}\label{Eq:cleqzga-gb}
 \seql{a+b}\subseteq\mu\seqr{z_{\xi}\sd z_{\eta}}\,,\quad
 \text{whenever }\xi<\eta<\go_1\,.
 \end{equation}

\begin{lemma}\label{L:zxidecr}
For all $\xi\leq\eta<\go_1$, the relation
$\seql{a+b}\cap\mu\seqr{z_{\eta}\sd z_{\xi}}=\set{0}$ holds.
\end{lemma}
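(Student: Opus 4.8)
The plan is to reduce the statement to the orthogonality of the two complementary truncated differences $z_{\xi}\sd z_{\eta}$ and $z_{\eta}\sd z_{\xi}$ in the underlying \lgrp\ of~$A$, and then to push this relation through the meet-preserving map~$\mu$, combining the outcome with the containment already recorded in~\eqref{Eq:cleqzga-gb}.

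First I would dispose of the degenerate case $\xi=\eta$: here $z_{\eta}\sd z_{\xi}=(z_{\xi}-z_{\xi})^+=0$, so $\seqr{z_{\eta}\sd z_{\xi}}=\set{0}$ and hence $\mu\seqr{z_{\eta}\sd z_{\xi}}=\set{0}$, whence the intersection is trivially~$\set{0}$.

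For the main case $\xi<\eta$, the key input is the second identity of Lemma~\ref{L:Gabcsd}, namely $(z_{\xi}\sd z_{\eta})\wedge(z_{\eta}\sd z_{\xi})=0$. Since both $z_{\xi}\sd z_{\eta}$ and $z_{\eta}\sd z_{\xi}$ lie in~$A^+$, the meet formula in~\eqref{Eq:jjmmseqr} gives
$\seqr{z_{\xi}\sd z_{\eta}}\cap\seqr{z_{\eta}\sd z_{\xi}}=\seqr{(z_{\xi}\sd z_{\eta})\wedge(z_{\eta}\sd z_{\xi})}=\seqr{0}=\set{0}$ inside~$\Idrc{A}$. Applying the $0$-lattice homomorphism~$\mu$, which preserves finite meets and sends the zero~$\set{0}$ to~$\set{0}$, I obtain $\mu\seqr{z_{\xi}\sd z_{\eta}}\cap\mu\seqr{z_{\eta}\sd z_{\xi}}=\set{0}$.

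Finally, I would invoke~\eqref{Eq:cleqzga-gb}, which yields $\seql{a+b}\subseteq\mu\seqr{z_{\xi}\sd z_{\eta}}$ for $\xi<\eta$. Intersecting this containment with~$\mu\seqr{z_{\eta}\sd z_{\xi}}$ then forces
$\seql{a+b}\cap\mu\seqr{z_{\eta}\sd z_{\xi}}\subseteq\mu\seqr{z_{\xi}\sd z_{\eta}}\cap\mu\seqr{z_{\eta}\sd z_{\xi}}=\set{0}$, as desired. There is no genuine obstacle here: the whole content is the orthogonality of the two complementary differences, and the only point to watch is the bookkeeping of which difference is which, so that the \emph{reversed} difference appearing in~\eqref{Eq:cleqzga-gb} (the one with~$\xi$ first) is used as the term containing~$\seql{a+b}$, while the lemma concerns the opposite difference (with~$\eta$ first).
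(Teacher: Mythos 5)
Your proposal is correct and follows essentially the same route as the paper's proof: bound $\seql{a+b}$ by $\mu\seqr{z_{\xi}\sd z_{\eta}}$ via~\eqref{Eq:cleqzga-gb}, then collapse the intersection using~\eqref{Eq:jjmmseqr}, the orthogonality identity $(z_{\xi}\sd z_{\eta})\wedge(z_{\eta}\sd z_{\xi})=0$ from Lemma~\ref{L:Gabcsd}, and the fact that the surjective lattice homomorphism~$\mu$ preserves meets and zero. The bookkeeping of which truncated difference contains $\seql{a+b}$ is handled exactly as in the paper.
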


\begin{proof}
The conclusion is trivial if $\xi=\eta$.
Now suppose that $\xi<\eta$.
We compute:
 \begin{align}
 \seql{a+b}\cap\mu\seqr{z_{\eta}\sd z_{\xi}}&\subseteq
 \mu\seqr{z_{\xi}\sd z_{\eta}}\cap\mu\seqr{z_{\eta}\sd z_{\xi}}
 &&\text{(use~\eqref{Eq:cleqzga-gb})}\notag\\
 &=\mu\pI{\seqr{z_{\xi}\sd z_{\eta}}\cap\seqr{z_{\eta}\sd z_{\xi}}}
 \notag\\
 &=\mu(\set{0})
 &&(\text{use~\eqref{Eq:jjmmseqr} and Lemma~\ref{L:Gabcsd}})\notag\\
 &=\set{0}\,.\tag*{\qed}
 \end{align}
\renewcommand{\qed}{}
\end{proof}

For each $\xi<\go_1$, we set
 \begin{align*}
 x_{\xi}&\eqdef(x+y)\wedge\pI{xz_0\sd(x+y)z_{\xi+1}}\,,\\
 y_{\xi}&\eqdef(x+y)\wedge\pI{(x+y)z_{\xi}\sd xz_0}\,,\\
 \bx_{\xi}&\eqdef\mu\seqr{x_{\xi}}\,,\\
 \by_{\xi}&\eqdef\mu\seqr{y_{\xi}}\,.
 \end{align*}

\begin{lemma}\label{Eq:Ineqxyga}
The following relations hold, whenever $\xi<\eta<\go_1$:
\begin{enumerater}
\item\label{xgasmall}
$\bx_{\xi}\subseteq\seql{a}$;

\item\label{ygasmall}
$\by_{\xi}\subseteq\seql{b}$;

\item\label{a+bleqxvyga}
$\seql{a+b}=\bx_{\xi}\vee\by_{\xi}$;

\item\label{xygagbdisj}
$\bx_{\xi}\cap\by_{\eta}=\set{0}$.

\end{enumerater}
\end{lemma}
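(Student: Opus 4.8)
The plan is to translate each of the four claims into a statement about finitely generated radical $\ell$-ideals of~$A$, exploit the dictionary furnished by~\eqref{Eq:jjmmseqr} (so that, for $s,t\in A^+$, products become meets, $\seqr{st}=\seqr s\cap\seqr t$, meets become meets, $\seqr{s\wedge t}=\seqr s\cap\seqr t$, and sums become joins, $\seqr{s+t}=\seqr s\vee\seqr t$), and then push the resulting containments through the lattice homomorphism~$\mu$. The only facts about~$\mu$ that I will feed in are the chosen values $\seql a=\mu\seqr x$, $\seql b=\mu\seqr y$, $\seql{c_\xi}=\mu\seqr{z_\xi}$, together with the three containments already secured: $\seql{a+b}\subseteq\mu\seqr{z_\xi}$ from~\eqref{Eq:cleqzga}, $\seql{a+b}\subseteq\mu\seqr{z_\xi\sd z_\eta}$ for $\xi<\eta$ from~\eqref{Eq:cleqzga-gb}, and the orthogonality $\seql{a+b}\cap\mu\seqr{z_\eta\sd z_\xi}=\set0$ for $\xi\le\eta$ (Lemma~\ref{L:zxidecr}). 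On the ring side the workhorses are the three items of Lemma~\ref{L:Gabcsd}: the triangle inequality $p\sd r\le(p\sd q)+(q\sd r)$, the orthogonality $(p\sd q)\wedge(q\sd p)=0$, and the compatibility $c(p\sd q)=cp\sd cq$ for $c\in A^+$; I will also use that $\sd$ is decreasing in its second argument and the sublinearity $r\wedge(s+t)\le(r\wedge s)+t$ valid for $t\ge0$.

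For $\bx_\xi\subseteq\seql a$, first I would prove the ring inequality
\[
xz_0\sd(x+y)z_{\xi+1}\le x(z_0\sd z_{\xi+1})\,,
\]
running the triangle inequality through the intermediate term $xz_{\xi+1}$ and noting $xz_{\xi+1}\sd(x+y)z_{\xi+1}=0$. Applying $\mu\seqr\cdot$ then gives $\bx_\xi\subseteq\mu\seqr{x(z_0\sd z_{\xi+1})}=\seql a\cap\mu\seqr{z_0\sd z_{\xi+1}}=\seql a$, the last equality because $\seql a\subseteq\seql{a+b}\subseteq\mu\seqr{z_0\sd z_{\xi+1}}$ by~\eqref{Eq:cleqzga-gb}. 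The containment $\by_\xi\subseteq\seql b$ is symmetric: here the bound is $(x+y)z_\xi\sd xz_0\le yz_\xi+x(z_\xi\sd z_0)$, and the term $\mu\seqr{x(z_\xi\sd z_0)}\subseteq\seql a\cap\mu\seqr{z_\xi\sd z_0}$ vanishes by Lemma~\ref{L:zxidecr}, leaving $\by_\xi\subseteq\seql b$.

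For $\seql{a+b}=\bx_\xi\vee\by_\xi$, the inclusion $\subseteq$ is immediate since $x_\xi,y_\xi\le x+y$. For $\supseteq$, write $P=xz_0\sd(x+y)z_{\xi+1}$ and $Q=(x+y)z_\xi\sd xz_0$, so that $\bx_\xi=\seql{a+b}\cap\mu\seqr P$ and $\by_\xi=\seql{a+b}\cap\mu\seqr Q$; by distributivity $\bx_\xi\vee\by_\xi=\seql{a+b}\cap(\mu\seqr P\vee\mu\seqr Q)$. I would then apply the triangle inequality to the triple $(x+y)z_\xi$, $xz_0$, $(x+y)z_{\xi+1}$ to obtain $(x+y)(z_\xi\sd z_{\xi+1})\le Q+P$, whose $\mu\seqr\cdot$-image equals $\seql{a+b}\cap\mu\seqr{z_\xi\sd z_{\xi+1}}=\seql{a+b}$ by~\eqref{Eq:cleqzga-gb} and lies inside $\mu\seqr P\vee\mu\seqr Q$. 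Hence $\seql{a+b}\subseteq\mu\seqr P\vee\mu\seqr Q$, and the displayed intersection is all of $\seql{a+b}$.

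The hard part will be the disjointness $\bx_\xi\cap\by_\eta=\set0$ for $\xi<\eta$, and this is where the definitions of $x_\xi$ and $y_\eta$ are engineered (note that $\bx_\xi\subseteq\seql a$ and $\by_\eta\subseteq\seql b$ are not enough, since $\seql a\cap\seql b\neq\set0$). Keeping $P=xz_0\sd(x+y)z_{\xi+1}$ and setting $Q_\eta=(x+y)z_\eta\sd xz_0$ and $Q'=(x+y)z_{\xi+1}\sd xz_0$, the decisive observation is that in the \emph{diagonal} case the pair $P,Q'$ is exactly $(u\sd v),(v\sd u)$ for $u=xz_0$, $v=(x+y)z_{\xi+1}$, so $P\wedge Q'=0$ by Lemma~\ref{L:Gabcsd}. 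For the off-diagonal case I would control the gap by the triangle inequality $Q_\eta\le Q'+(x+y)(z_\eta\sd z_{\xi+1})$ together with the sublinearity $r\wedge(s+t)\le(r\wedge s)+t$, collapsing the diagonal meet to zero and yielding
\[
x_\xi\wedge y_\eta\le P\wedge Q_\eta\le(x+y)(z_\eta\sd z_{\xi+1})\,.
\]
Applying $\mu\seqr\cdot$ gives $\bx_\xi\cap\by_\eta=\mu\seqr{x_\xi\wedge y_\eta}\subseteq\seql{a+b}\cap\mu\seqr{z_\eta\sd z_{\xi+1}}=\set0$ by Lemma~\ref{L:zxidecr}, which applies precisely because $\xi<\eta$ forces $\xi+1\le\eta$. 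The real obstacle is thus conceptual rather than computational: one must see that $x_\xi$ has to be built from $z_{\xi+1}$ and $y_\eta$ from $z_\eta$ so that the base case is an exact orthogonal $\sd$-pair, while the discrepancy $z_\eta\sd z_{\xi+1}$ — negligible modulo $\seql{a+b}$ by Lemma~\ref{L:zxidecr} — absorbs everything off the diagonal.
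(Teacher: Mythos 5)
Your proof is correct and takes essentially the same route as the paper's: each item is reduced, via Lemma~\ref{L:Gabcsd} and the dictionary~\eqref{Eq:jjmmseqr}, to inequalities among positive ring elements built from the intermediate terms $xz_{\xi+1}$ and $(x+y)z_{\xi+1}$, and then pushed through~$\mu$ using~\eqref{Eq:cleqzga-gb} and Lemma~\ref{L:zxidecr}, with the decisive point for~(iv) being exactly the paper's: $xz_0\sd(x+y)z_{\xi+1}$ and $(x+y)z_{\xi+1}\sd xz_0$ are an orthogonal $\sd$-pair and the discrepancy $z_\eta\sd z_{\xi+1}$ dies modulo $\seql{a+b}$. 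The only cosmetic differences are that the paper gets~(i) from the one-line bound $0\leq x_\xi\leq xz_0$ and phrases~(ii), (iv) at the ideal level via distributivity rather than at the element level via $r\wedge(s+t)\leq(r\wedge s)+t$.
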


\begin{proof}
\emph{Ad}~\eqref{xgasmall}.
{}From $x+y\geq0$ and $z_{\xi}\geq0$ it follows that $0\leq x_{\xi}\leq xz_0$, whence $\seqr{x_{\xi}}\subseteq\seqr{xz_0}\subseteq\seqr{x}$.
Apply the homomorphism~$\mu$.

\emph{Ad}~\eqref{ygasmall}.
{}From~\eqref{Eq:cleqzga-gb} it follows that
$\seq{a+b}\subseteq\mu\seqr{z_{0}\sd z_{\xi}}$.
Hence,
 \begin{align}
 \seq{a+b}\cap\mu\seqr{xz_{\xi}\sd xz_{0}}&\subseteq
 \seq{a+b}\cap\mu\seqr{z_{\xi}\sd z_{0}}\label{Eq:a+b1}\\
 &=\set{0}&&\text{by Lemma~\ref{L:zxidecr}}\,.\label{Eq:a+b4}
 \end{align}
Using Lemma~\ref{L:Gabcsd}, we get $(x+y)z_{\xi}\sd xz_0\leq yz_{\xi}+(xz_{\xi}\sd xz_0)$, thus
 \[
 y_{\xi}\leq(x+y)\wedge\pI{yz_{\xi}+(xz_{\xi}\sd xz_0)}\,,
 \]
and thus, applying the homomorphism~$\mu$ together with~\eqref{Eq:a+b1}--\eqref{Eq:a+b4},
 \[
 \by_{\xi}=\mu\seqr{y_{\xi}}\subseteq
 \mu\seqr{(x+y)\wedge yz_{\xi}}\vee\pII{
 \seql{a+b}\cap\mu\seqr{xz_{\xi}\sd xz_0}}
 \subseteq\mu\seqr{y}=\seql{b}\,.
 \]

\emph{Ad}~\eqref{a+bleqxvyga}.
We compute:
 \begin{align*}
 \seql{a+b}&=\mu\pI{\seqr{x+y}\cap\seqr{z_{\xi}\sd z_{\xi+1}}}
 &&(\text{apply~\eqref{Eq:cleqzga-gb}})\\
 &=\mu\seqr{(x+y)(z_{\xi}\sd z_{\xi+1})}
 &&(\text{use~\eqref{Eq:jjmmseqr}})\\
 &=\mu\seqr{(x+y)z_{\xi}\sd (x+y)z_{\xi+1}}
 &&(\text{use Lemma~\ref{L:Gabcsd}})\\
 &\subseteq\mu\seqr{(x+y)z_{\xi}\sd xz_{0}}\vee
 \mu\seqr{xz_{0}\sd (x+y)z_{\xi+1}}
 &&(\text{use Lemma~\ref{L:Gabcsd}})\,.
 \end{align*}
Since $\seql{a+b}=\mu\seqr{x+y}$, the desired conclusion follows from the distributivity of the lattice~$\Idlc{G_{\go_1^{\op}}}$\,.
 
\emph{Ad}~\eqref{xygagbdisj}.
It follows from Lemma~\ref{L:zxidecr} that
$\seq{a+b}\cap\mu\seqr{z_{\eta}\sd z_{\xi+1}}=\set{0}$ and hence, \emph{a fortiori}, that
 \begin{equation}\label{Eq:a+bcapzet0=0}
 \seq{a+b}\cap\mu\seqr{(x+y)z_{\eta}\sd(x+y)z_{\xi+1}}=\set{0}\,.
 \end{equation}
Hence, using Lemma~\ref{L:Gabcsd},
 \begin{align*}
 \by_{\eta}&=
 \seq{a+b}\cap\mu\seqr{(x+y)z_{\eta}\sd xz_{0}}\\
 &\subseteq
 \seq{a+b}\cap\pII{\mu\seqr{(x+y)z_{\eta}\sd(x+y)z_{\xi+1}}
 \vee\mu\seqr{(x+y)z_{\xi+1}\sd xz_{0}}}\\
 &=\seq{a+b}\cap\mu\seqr{(x+y)z_{\xi+1}\sd xz_{0}}
 \qquad\text{(use~\eqref{Eq:a+bcapzet0=0})}\,.
 \end{align*}
It follows that
 \begin{align}
 \bx_{\xi}\cap\by_{\eta}&\subseteq
 \mu\seqr{xz_{0}\sd(x+y)z_{\xi+1}}\cap
 \mu\seqr{(x+y)z_{\xi+1}\sd xz_{0}}\notag\\
 &=\mu(\set{0})\qquad
 (\text{use~\eqref{Eq:jjmmseqr} and Lemma~\ref{L:Gabcsd}})
 \notag\\
 &=\set{0}\,.\tag*{\qed} 
 \end{align}
\renewcommand{\qed}{}
\end{proof}

Set $u_{\xi}\eqdef x_{\xi}\wedge y_{\xi}$ and $\bu_{\xi}\eqdef\mu\seqr{u_{\xi}}$, for all $\xi<\go_1$.

\begin{lemma}\label{L:uxineq0}
The principal $\ell$-ideals~$\bu_{\xi}$ of~$G_{\go_1^{\op}}$, for $\xi<\go_1$, are all nonzero and pairwise orthogonal.
\end{lemma}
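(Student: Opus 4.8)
The plan is to reduce~$\bu_\xi$ to the intersection $\bx_\xi\cap\by_\xi$, obtain nonzeroness from the direct indecomposability of~$\seql{a+b}$ (Lemma~\ref{L:a+bindec}), and obtain pairwise orthogonality directly from the disjointness already recorded in Lemma~\ref{Eq:Ineqxyga}. First I would rewrite~$\bu_\xi$: since~$x_\xi$ and~$y_\xi$ are meets of elements of~$A^+$, they lie in~$A^+$, so formula~\eqref{Eq:jjmmseqr} gives $\seqr{u_\xi}=\seqr{x_\xi\wedge y_\xi}=\seqr{x_\xi}\cap\seqr{y_\xi}$. Applying the lattice homomorphism~$\mu$, which preserves finite meets, yields $\bu_\xi=\bx_\xi\cap\by_\xi$.

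For nonzeroness, I would first record the two incomparabilities $\seql{a}\nsubseteq\seql{b}$ and $\seql{b}\nsubseteq\seql{a}$ in~$\Idlc{G_{\go_1^{\op}}}$; these follow at once from the Baker--Beynon representation used in the proof of Lemma~\ref{L:a+bindec}, under which~$\seql{a}$ and~$\seql{b}$ correspond to the incomparable relative-open subsets $\setm{(x,y)\in\gO}{x>0}$ and $\setm{(x,y)\in\gO}{y>0}$ of~$\gO$. Suppose, towards a contradiction, that $\bu_\xi=\bx_\xi\cap\by_\xi=\set{0}$. Together with the relation $\seql{a+b}=\bx_\xi\vee\by_\xi$ from Lemma~\ref{Eq:Ineqxyga}, the direct indecomposability of~$\seql{a+b}$ (Lemma~\ref{L:a+bindec}) forces one of~$\bx_\xi$, $\by_\xi$ to vanish. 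If $\bx_\xi=\set{0}$, then $\by_\xi=\seql{a+b}$; since $\by_\xi\subseteq\seql{b}$ and $\seql{a}\subseteq\seql{a+b}$, this gives $\seql{a}\subseteq\seql{b}$, a contradiction. Symmetrically, $\by_\xi=\set{0}$ leads to $\seql{b}\subseteq\seql{a}$, again impossible. Hence $\bu_\xi\neq\set{0}$.

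For orthogonality, fix $\xi<\eta<\go_1$. From $\bu_\xi\subseteq\bx_\xi$ and $\bu_\eta\subseteq\by_\eta$ I obtain $\bu_\xi\cap\bu_\eta\subseteq\bx_\xi\cap\by_\eta=\set{0}$, the last equality being the disjointness relation of Lemma~\ref{Eq:Ineqxyga}; by symmetry of the intersection, the~$\bu_\xi$ are pairwise orthogonal.

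The bulk of the work, and the only delicate point, is the nonzeroness step; everything else is bookkeeping. I expect the main obstacle to be correctly combining indecomposability with the two incomparabilities---specifically, ruling out that either~$\bx_\xi$ or~$\by_\xi$ could equal all of~$\seql{a+b}$---but no genuine computation should be needed.
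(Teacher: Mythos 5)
Your proposal is correct and follows essentially the same route as the paper: nonzeroness via $\bu_\xi=\bx_\xi\cap\by_\xi$, the indecomposability of $\seql{a+b}$ from Lemma~\ref{L:a+bindec}, and items (i)--(iii) of Lemma~\ref{Eq:Ineqxyga}; orthogonality via item (iv). The only cosmetic difference is that the paper phrases the final contradiction as $a+b\in\seql{a}$ or $a+b\in\seql{b}$ rather than as an inclusion $\seql{a}\subseteq\seql{b}$, both of which are ruled out by the same Baker--Beynon picture of $F$.
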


\begin{proof}
The statement of pairwise orthogonality follows from Lemma~\ref{Eq:Ineqxyga}\eqref{xygagbdisj}.
If $\bu_{\xi}=\nobreak\set{0}$, then $\bx_{\xi}\cap\by_{\xi}=\set{0}$, thus, by Lemmas~\ref{L:a+bindec} and~\ref{Eq:Ineqxyga}\eqref{a+bleqxvyga}, either~$\bx_{\xi}=\set{0}$ or~$\by_{\xi}=\set{0}$, thus either~$\bx_{\xi}=\seql{a+b}$ or~$\by_{\xi}=\seql{a+b}$, and thus, by items~\eqref{xgasmall} and~\eqref{ygasmall} of Lemma~\ref{Eq:Ineqxyga}, either $a+b\in\seql{a}$ or $a+b\in\seql{b}$, a contradiction.
\end{proof}

\begin{proof}[End of the proof of Theorem~\ref{T:NonrReprlGrp}]
By Lemma~\ref{L:uxineq0}, $\Idlc{G_{\go_1^{\op}}}$ has an uncountable, pairwise orthogonal set of nonzero ideals.
Picking positive generators of the~$\bu_{\xi}$, we get an uncountable, pairwise orthogonal set of nonzero elements in~$G_{\go_1^{\op}}$, in contradiction with Lemma~\ref{L:Asymp}.
\end{proof}

As mentioned before, the real spectrum of any commutative unital ring~$A$ is homeomorphic to the Brumfiel spectrum of the universal \fring~of~$A$ (cf. Theorem~\ref{T:RS2BS}).
By applying Theorem~\ref{T:StoneDual}, we get the following corollary.

\begin{corollary}\label{C:NonrReprlGrp}
The $\ell$-spectrum of the unital Abelian \lgrp~$G_{\go_1^{\op}}$ cannot be embedded, as a spectral subspace, into the Brumfiel spectrum of any commutative \fring.
Hence, it also cannot be embedded, as a spectral subspace, into the real spectrum of any commutative unital ring.
\end{corollary}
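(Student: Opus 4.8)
The plan is to deduce this corollary from Theorem~\ref{T:NonrReprlGrp} entirely by Stone duality, with no further lattice-theoretic labor. First I would argue by contradiction: suppose that the $\ell$-spectrum $\Specl{G_{\go_1^{\op}}}$ embeds, as a spectral subspace, into the Brumfiel spectrum $\Specb{A}$ of some commutative \fring~$A$. By Proposition~\ref{P:IdlcG} the Stone dual of $\Specl{G_{\go_1^{\op}}}$ is $\Idlc{G_{\go_1^{\op}}}$, while by Proposition~\ref{P:IdrcABr} the Stone dual of $\Specb{A}$ is $\Idrc{A}$; both spaces are generalized spectral, so Stone duality in the form of Theorem~\ref{T:StoneDual} applies.

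The key step is to translate the assumed spectral embedding into a surjection of the dual lattices. By Remark~\ref{Rk:StoneDual}, a spectral subspace inclusion dualizes to a surjective $0$-lattice homomorphism between the corresponding Stone duals; concretely, sending each compact open $V\subseteq\Specb{A}$ to its trace on the embedded copy of $\Specl{G_{\go_1^{\op}}}$ produces a surjective $0$-lattice homomorphism $\mu\colon\Idrc{A}\twoheadrightarrow\Idlc{G_{\go_1^{\op}}}$. This is exactly the configuration that Theorem~\ref{T:NonrReprlGrp} rules out. Hence the supposed embedding cannot exist, which establishes the first assertion.

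For the second assertion I would invoke that real spectra are already Brumfiel spectra. By Theorem~\ref{T:RS2BS} (equivalently, Corollary~\ref{C:BrS2RS1}), the real spectrum of any commutative unital ring is homeomorphic to the Brumfiel spectrum of a suitable commutative \fring. Consequently, any spectral subspace embedding of $\Specl{G_{\go_1^{\op}}}$ into a real spectrum would at once furnish such an embedding into a Brumfiel spectrum, which the first part has just excluded; so no such embedding into a real spectrum exists either.

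I do not expect a genuine obstacle in this argument, since all the substantive work is already carried by Theorem~\ref{T:NonrReprlGrp}. The only point that demands care is applying Stone duality in the correct direction: \emph{spectral subspaces correspond to surjective}, rather than injective, $0$-lattice homomorphisms, as recorded in Remark~\ref{Rk:StoneDual}. Getting this orientation right is precisely what converts the non-existence of a surjection $\Idrc{A}\twoheadrightarrow\Idlc{G_{\go_1^{\op}}}$ into the non-embeddability of $\Specl{G_{\go_1^{\op}}}$ as a spectral subspace.
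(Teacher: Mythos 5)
Your argument is correct and coincides with the paper's own proof: the paper likewise deduces the corollary from Theorem~\ref{T:NonrReprlGrp} by applying Stone duality (Theorem~\ref{T:StoneDual} and Remark~\ref{Rk:StoneDual}, converting the hypothetical spectral subspace embedding into a surjective lattice homomorphism $\Idrc{A}\twoheadrightarrow\Idlc{G_{\go_1^{\op}}}$), and then handles real spectra via Theorem~\ref{T:RS2BS}. You have correctly identified the one delicate point, namely the orientation of the duality between spectral subspaces and surjections.
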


\section{Omitting homomorphic images of \lrep\ lattices}\label{S:UnliftCube}

Although it is well known, since Delzell and Madden~\cite{DelMad1994}, that there are non-\lrep\ completely normal bounded distributive lattices, the corresponding result for \emph{homomorphic images} of \lrep\ lattices was not known.
In this section, we shall fill that gap by constructing a completely normal bounded distributive lattice, of cardinality~$\aleph_2$, which is not a homomorphic image of any \lrep\ lattice.
The method used, in particular the part involving Kuratowski's Free Set Theorem, originates in the author's paper~\cite{NonMeas}.
A crucial step consists of coining a property satisfied by all homomorphic images of \lrep\ lattices.

\begin{lemma}\label{L:RefI}
For every set~$I$, every homomorphic image of an \lrep\ lattice satisfies the following infinitary statement:
\begin{equation}\label{Eq:RefI}
 \begin{aligned}
 \text{For every family }\vecm{\ba_i}{i\in I}\,,
 \text{ there exists a family }
 \vecm{\bc_{i,j}}{(i,j)\in I\times I}\text{ such that}\\
 \text{ each }
 \ba_i=(\ba_i\wedge\ba_j)\vee\bc_{i,j}\,,
 \text{ each }\bc_{i,j}\wedge\bc_{j,i}=0\,,\text{ and each }
 \bc_{i,k}\leq\bc_{i,j}\vee\bc_{j,k}\,.
 \end{aligned}
 \end{equation}
\end{lemma}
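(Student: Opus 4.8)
The plan is to reduce the assertion to the case of an \lrep\ lattice itself, and then to verify \eqref{Eq:RefI} for $\Idlc{G}$ by producing the witnesses $\bc_{i,j}$ explicitly from the truncated differences $a\sd b=(a-b)^+$ in the \lgrp~$G$.

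First I would check that \eqref{Eq:RefI} transfers along surjective $0$-lattice homomorphisms. Suppose $L$ satisfies \eqref{Eq:RefI} and let $h\colon L\twoheadrightarrow M$ be a surjective $0$-lattice homomorphism. Given a family $\vecm{\ba_i}{i\in I}$ in~$M$, surjectivity of~$h$ lets me lift it to a family $\vecm{\ba_i'}{i\in I}$ in~$L$ with $h(\ba_i')=\ba_i$. Applying \eqref{Eq:RefI} inside~$L$ yields a family $\vecm{\bc_{i,j}'}{(i,j)\in I\times I}$ fulfilling the three clauses relative to the $\ba_i'$. Setting $\bc_{i,j}\eqdef h(\bc_{i,j}')$ and using that~$h$ preserves finite meets, finite joins, the ordering, and the zero, each of the three clauses passes from the primed data in~$L$ to the unprimed data in~$M$; in particular $\bc_{i,j}\wedge\bc_{j,i}=h(0)=0$. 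Hence~$M$ satisfies \eqref{Eq:RefI}, and it suffices to prove that every \lrep\ lattice does.

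Next I would fix an Abelian \lgrp~$G$, put $L\eqdef\Idlc{G}$, and write a given family as $\ba_i=\seql{a_i}$ with $a_i\in G^+$. The crux is the uniform choice $\bc_{i,j}\eqdef\seql{a_i\sd a_j}$, which immediately delivers all three clauses from the computation rules \eqref{Eq:jjmmseql} together with Lemma~\ref{L:Gabcsd}. Indeed, since $a_i\sd a_j=a_i-(a_i\wedge a_j)$ with both $a_i\wedge a_j$ and $a_i\sd a_j$ in~$G^+$, the join rule gives $(\ba_i\wedge\ba_j)\vee\bc_{i,j}=\seql{(a_i\wedge a_j)+(a_i\sd a_j)}=\seql{a_i}=\ba_i$; the orthogonality $(a_i\sd a_j)\wedge(a_j\sd a_i)=0$ from Lemma~\ref{L:Gabcsd} gives $\bc_{i,j}\wedge\bc_{j,i}=\seql{0}=0$; and the triangle inequality $a_i\sd a_k\leq(a_i\sd a_j)+(a_j\sd a_k)$ from Lemma~\ref{L:Gabcsd}, via monotonicity of $a\mapsto\seql{a}$, gives $\bc_{i,k}\subseteq\seql{(a_i\sd a_j)+(a_j\sd a_k)}=\bc_{i,j}\vee\bc_{j,k}$.

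I do not expect a serious obstacle: the entire content of the lemma is the recognition that the refinement witnesses $\bc_{i,j}$ can be read off from the truncated differences, so that the three required relations become exactly the three clauses already recorded in Lemma~\ref{L:Gabcsd}. The only point deserving a word of care is the transfer step of the first paragraph. A $\forall\exists$-statement is not inherited by homomorphic images for purely formal reasons; what makes the passage valid is that \eqref{Eq:RefI} is a \emph{positive} sentence, whose matrix is a conjunction of equalities and order-inequalities between lattice terms. This lets me first \emph{lift} the universally quantified family along the surjection and only afterwards \emph{push forward} the existential witnesses through the homomorphism~$h$, each atomic relation being preserved along the way.
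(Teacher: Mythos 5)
Your proof is correct and follows essentially the same route as the paper's: reduce to $\Idlc{G}$ via preservation of the positive $\forall\exists$-statement under surjective homomorphisms (which the paper dismisses as obvious and you spell out), then take $\bc_{i,j}\eqdef\seql{a_i\sd a_j}$ and read off the three clauses from Lemma~\ref{L:Gabcsd} and~\eqref{Eq:jjmmseql}. No issues.
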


\begin{proof}
Since~\eqref{Eq:RefI} is obviously preserved under homomorphic images, it suffices to prove that~$\Idlc{G}$ satisfies~\eqref{Eq:RefI}, whenever~$G$ is an Abelian \lgrp.
Write $\ba_i=\seql{a_i}$, where $a_i\in G^+$, for all $i\in I$.
The principal $\ell$-ideals $\bc_{i,j}\eqdef\seql{a_i\sd a_j}$, for $i,j\in I$, satisfy the required conditions.
\end{proof}

As usual, we set $\two\eqdef\set{0,1}$ and $\three\eqdef\set{0,1,2}$.
We set $\ol{0}=0$, $\ol{1}=\ol{2}=2$, we denote by $\be\colon\two\hookrightarrow\three$ the map sending~$0$ to~$0$ and~$1$ to~$2$, and we denote by $\br\colon\three\twoheadrightarrow\two$ the map sending~$0$ to~$0$ and any nonzero element to~$1$.
Let $\bbf,\bg\colon\three\hookrightarrow\three^2$ and
$\ba,\bb,\bc\colon\three^2\hookrightarrow\three^3\times\two$ be the maps defined by
 \begin{align*}
 \bbf(x)&=(\ol{x},x)\,,\\
 \bg(x)&=(x,\ol{x})\,, 
 \end{align*}
for all~$x\in\three$, and
 \begin{align*}
 \ba(x,y)&=(\ol{x},x,y,\br(y))\,,\\
 \bb(x,y)&=(x,\ol{x},y,\br(x))\,,\\
 \bc(x,y)&=(x,y,\ol{y},\br(y))\,, 
 \end{align*}
for all $(x,y)\in\three^2$.
The mappings~$\be$, $\bbf$, $\bg$, $\ba$, $\bb$, $\bc$ form a commutative diagram of finite distributive lattices with $0,1$-lattice embeddings, represented in Figure~\ref{Fig:UnliftCube} as indexed by the cube $\pI{\Pow(\set{1,2,3}),\subseteq}$.
Observe that none of the maps~$\bbf$, $\bg$, $\ba$, $\bb$, $\bc$ is closed.
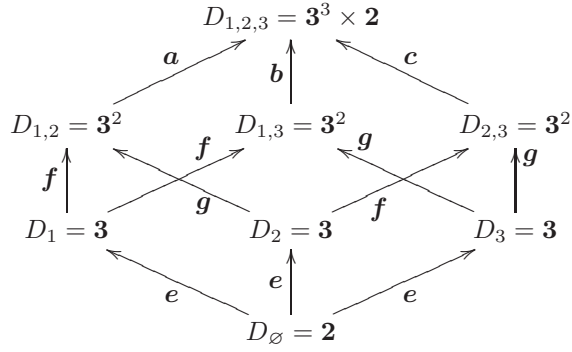
\begin{figure}[htb]
 \[
 \def\labelstyle{\displaystyle}
 \xymatrix{
 & D_{1,2,3}=\three^3\times\two & \\
 D_{1,2}=\three^2\ar[ur]^{\ba} & D_{1,3}=\three^2\ar[u]^{\bb}
 & D_{2,3}=\three^2\ar[ul]_{\bc}\\
 D_1=\three\ar[u]^{\bbf}\ar[ur]^(.6){\bbf\!\!\!} &
 D_2=\three\ar[ul]^(.4){\bg\!\!\!}\ar[ur]_(.4){\!\!\!\bbf} &
 D_3=\three\ar[u]_(.7){\bg}\ar[ul]_(.7){\bg}\\
 & D_\es=\two\ar[ul]^{\be}\ar[u]^{\be}\ar[ur]_{\be} &
  }
 \]
\caption{A cube of $0,1$-lattice embeddings}
\label{Fig:UnliftCube}
\end{figure}

 \begin{figure}[htb]
 \[
 \def\labelstyle{\displaystyle}
 \xymatrix{
 & E_{1,2} &\\
 E_1\ar[ru]^{\bg_1} && E_2\ar[lu]_{\bg_2}\\
 & E_0\ar[lu]^{\bbf_1}\ar[ru]_{\bbf_2} &
 }
 \]
 \caption{A commutative square for Lemma~\ref{L:StrAmalg}}
 \label{Fig:Dijpush}
 \end{figure}
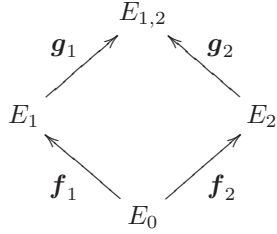
 
We leave to the reader the straightforward verification of the following lemma.

\begin{lemma}\label{L:StrAmalg}
Every square as on Figure~\textup{\ref{Fig:Dijpush}},
between three consecutive levels in the cube of Figure~\textup{\ref{Fig:UnliftCube}}, is a \emph{strong amalgam}, that is, setting $\bh\eqdef\bg_1\circ\bbf_1=\bg_2\circ\bbf_2$, the relation $\bg_1[E_1]\cap\bg_2[E_2]=\bh[E_0]$ holds.
\end{lemma}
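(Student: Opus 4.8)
The plan is to peel off the trivial inclusion and then verify the identity face by face. First I would note that $\bh[E_0]\subseteq\bg_1[E_1]\cap\bg_2[E_2]$ costs nothing: since the square commutes, $\bh[E_0]=\bg_1\rI{\bbf_1[E_0]}\subseteq\bg_1[E_1]$, and symmetrically $\bh[E_0]\subseteq\bg_2[E_2]$. All the content is in the reverse inclusion, and because every map in the cube is one-to-one, proving $\bg_1[E_1]\cap\bg_2[E_2]\subseteq\bh[E_0]$ is the same as checking, for each $p\in E_1$, that $\bg_1(p)\in\bg_2[E_2]$ forces $p\in\bbf_1[E_0]$.

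Next I would enumerate the squares. The three-level squares of the template of Figure~\ref{Fig:Dijpush} are exactly the six two-dimensional faces of the cube $\pI{\Pow(\set{1,2,3}),\subseteq}$: the three ``lower'' faces, containing the bottom vertex~$D_\es$ and spanning the levels $0,1,2$, and the three ``upper'' faces, containing the top vertex~$D_{1,2,3}$ and spanning the levels $1,2,3$. Up to interchanging the roles of~$E_1$ and~$E_2$, the three lower faces are identical copies of the span $\bbf\colon\three\to\three^2$ and $\bg\colon\three\to\three^2$ sitting over $\be\colon\two\to\three$, so a single computation settles all of them; the three upper faces differ only in which of~$\ba$, $\bb$, $\bc$ occur as the top pair, so they call for three parallel computations.

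The verification rests on a short list of elementary identities for~$\ol{\cdot}$, $\be$, $\br$ on~$\three$, which I would record first: $\ol{\cdot}$ is idempotent with range $\set{0,2}=\be[\two]$, so that $x=\ol{x}$ if{f} $x\in\be[\two]$; and $\ol{x}=\be\pI{\br(x)}$ together with $\br(\ol{x})=\br(x)$, for every $x\in\three$. For the lower faces one computes $\bbf[\three]=\set{(0,0),(2,1),(2,2)}$ and $\bg[\three]=\set{(0,0),(1,2),(2,2)}$, whose intersection $\set{(0,0),(2,2)}$ is exactly $\bh[\two]=(\bbf\circ\be)[\two]$. For each upper face I would write $\bg_1[\three^2]$ and $\bg_2[\three^2]$ as sets of $4$-tuples in $\three^3\times\two$, each image being cut out by a couple of equations of the forms ``one coordinate equals the~$\ol{\cdot}$ of another'' and ``the last coordinate equals $\br$ of some coordinate''; intersecting the two systems and simplifying by the identities above collapses the result to the three tuples of $\bh[\three]$. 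The only case in which a coordinate gets genuinely pinned down is the face with $\bg_1=\ba$ and $\bg_2=\bb$, where the two systems yield $a=\ol{b}$ and $b=\ol{a}$ in the first two coordinates, whence $a=b\in\set{0,2}$, leaving exactly $\set{(0,0,0,0),(2,2,1,1),(2,2,2,1)}=\bh[\three]$; the faces involving~$\bc$ come out even more directly.

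I do not anticipate a real obstacle: the statement is finite, and the work amounts to coordinate bookkeeping across the six faces. The one spot that needs more than direct matching is the $\ba/\bb$ face, where the idempotency of~$\ol{\cdot}$ (equivalently $\ol{x}=\be\pI{\br(x)}$) is precisely what forces the two leading coordinates into $\be[\two]$, and hence into the range of the map from the level below; this is the mechanism that upgrades each face from a plain amalgam to a \emph{strong} one.
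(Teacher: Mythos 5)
Your verification is correct and complete: the trivial inclusion, the reduction to the six faces (three lower copies of the $\bbf,\bg$ square over $\be$, three upper faces with pairs from $\ba,\bb,\bc$), and the coordinate computations — in particular the intersection $\set{(0,0,0,0),(2,2,1,1),(2,2,2,1)}$ for the $\ba/\bb$ face — all check out. The paper offers no written proof (it explicitly leaves the "straightforward verification" to the reader), and your argument is exactly that intended verification.
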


\begin{notation}\label{Not:cVvar}
We denote by~$\cL$ the similarity type $(\vee,\wedge,\sd,0,1)$, where~$\vee$, $\wedge$, and~$\sd$ are binary operation symbols and~$0$, $1$ are constant symbols.
Moreover, we denote by~$\cV_0$ the variety of $\cL$-structures obtained by stating the identities defining bounded distributive lattices on $(\vee,\wedge,0,1)$, together with the identities
 \begin{align}
 (\vx\wedge\vy)\vee(\vx\sd\vy)&=\vx\,,\label{Eq:Diff1}\\
 (\vx\sd\vy)\wedge(\vy\sd\vx)&=0\,.\label{Eq:Diff2} 
 \end{align}
\end{notation}

Evidently, (the lattice reduct of) any member of~$\cV_0$ is a completely normal bounded distributive lattice.

\begin{lemma}\label{L:Cube2cVstruct}
The commutative diagram, of bounded lattice embeddings, represented in Figure~\textup{\ref{Fig:UnliftCube}}, can be expanded to a commutative diagram of embeddings in~$\cV_0$.
\end{lemma}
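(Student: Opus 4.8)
The plan is to reduce the whole statement to a bookkeeping problem about \emph{splittings}. The point is that, since the only $\sd$-identities \eqref{Eq:Diff1} and \eqref{Eq:Diff2} defining~$\cV_0$ involve just the two variables~$\vx$ and~$\vy$, expanding a bounded distributive lattice~$D$ to a member of~$\cV_0$ is the same thing as choosing, for every ordered pair~$(a,b)$ of elements of~$D$, an element~$a\sd b$ in such a way that~$(a\sd b,b\sd a)$ is a splitting of~$(a,b)$; distinct pairs are subject to no joint constraint. Moreover, any $0$-lattice homomorphism carries splittings to splittings, so a lattice embedding~$f$ between two such expansions is an embedding in~$\cV_0$ if{f}, for each pair~$(a,b)$ of its domain, the splitting selected for~$(f(a),f(b))$ is the~$f$-image of the one selected for~$(a,b)$. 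Hence I only have to select splittings, in each of the four lattices of Figure~\ref{Fig:UnliftCube}, compatibly along the twelve edges of the cube.

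First I would record the forced choices: on the chains~$\two$ and~$\three$ the operation~$\sd$ is uniquely determined by \eqref{Eq:Diff1} and \eqref{Eq:Diff2}, and the three copies of~$\be$ preserve it. On each copy of~$\three^2$ I would take the coordinatewise operation and then override it on the two pairs~$\set{(2,2),(2,1)}$ and~$\set{(2,2),(1,2)}$, setting $(2,2)\sd(2,1)=(2,2)\sd(1,2)=(2,2)$ and the reversed differences~$=0$. A one-line check shows these are genuine splittings, and that this single modification is exactly what turns both~$\bbf$ and~$\bg$ into $\sd$-preserving maps; it is forced upon us by the ``bar'' operation, whose failure to preserve~$\sd$ is what breaks the naive coordinatewise choice.

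Then, on the top lattice~$\three^3\times\two$, I would define~$\sd$ by transporting the~$\three^2$-structure forward along each of~$\ba$, $\bb$, $\bc$ on the corresponding image, and by the coordinatewise rule on all remaining pairs. Each transported value is a splitting automatically, since embeddings preserve splittings, so the only real issue is that the three prescriptions might disagree where the images overlap. This is the step I expect to be the crux, and it is dispatched by the strong amalgamation Lemma~\ref{L:StrAmalg}: any two of~$\ba[\three^2]$, $\bb[\three^2]$, $\bc[\three^2]$ meet only in the image of the common lower copy of~$\three$, and all three meet only in the image of~$\two$. Thus a top pair constrained by two of the maps already lies in such an image, where the prescribed value is the forward image of the (forced, and path-independent by commutativity of the cube) splitting coming from~$\three$ or~$\two$; so the prescriptions agree there, the coordinatewise rule fills in the rest, and~$\ba$, $\bb$, $\bc$ simultaneously become embeddings in~$\cV_0$.

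In short, once the per-pair reduction is in place, the only delicate point is the consistency of the three forward transports on~$\three^3\times\two$; everything else is a finite verification on the explicitly listed elements. I would structure that consistency check entirely around Lemma~\ref{L:StrAmalg}, which confines every potential clash to the forced splittings on the chains and thereby rules it out.
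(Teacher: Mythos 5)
Your proof is correct and follows essentially the same route as the paper's: the paper likewise defines $\sd$ level by level, pulling the operation back along the embeddings whenever both entries of a pair lie in the range of a lower map (invoking Lemma~\ref{L:StrAmalg} to make that pullback well defined on overlaps) and choosing an arbitrary splitting otherwise. Your version merely makes the ``otherwise'' choice explicit (coordinatewise on the products) and spells out the forced overrides on $\three^2$, which the paper leaves implicit.
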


\begin{proof}
Represent the diagram by lattices~$D_p$ and arrows $\bbf_p^q\colon D_p\to D_q$, where $p\subseteq q\subseteq\set{1,2,3}$ (cf. Figure~\ref{Fig:UnliftCube}).
We define inductively the ``difference operation''~$\sd_{D_p}$ on~$D_p$, assuming that it has already been defined on all~$D_q$, for $q\subsetneqq p$.
Let $x_1,x_2\in D_p$.
If~$x_1$ and~$x_2$ belong to the range of~$\bbf_q^p$ for some $q\subsetneqq p$, then due to Lemma~\ref{L:StrAmalg}, there is a smallest such~$q$;
then let $y_1,y_2\in D_q$ such that each $x_i=\bbf_q^p(y_i)$, and define $x_1\sd_{D_p}x_2\eqdef\bbf_q^p(y_1\sd_{D_q}y_2)$.
If~$x_1$ and~$x_2$ do not belong to the range of any~$\bbf_q^p$ where $q\subsetneqq p$, then pick any splitting $(u,v)$ of $(x_1,x_2)$ in~$D_p$\,, then define $x_1\sd_{D_p}x_2\eqdef u$ and $x_2\sd_{D_p}x_1\eqdef v$.
By construction, $D_p$ is thus expanded to a member of~$\cV_0$, and each~$\bbf_q^p$ is an $\cL$-embedding.
\end{proof}

For the remainder of Section~\ref{S:UnliftCube}, we shall denote by $\vec{D}=\vecm{D_p,\bbf_p^q}{p\subseteq q\subseteq\set{1,2,3}}$ the cube of~$\cV_0$ obtained from Lemma~\ref{L:Cube2cVstruct}, and by $D\eqdef D_{1,2,3}=(\three^3\times\two,\vee,\wedge,\sd,0,1)$ its top member.
We also pick any variety~$\cV$ of $\cL$-algebras such that $D\in\cV$ and $\cV\subseteq\cV_0$, and we denote by~$\FV(X)$ the free $\cV$-algebra on~$X$, for any set~$X$.
We will identify~$\FV(X)$ with its canonical copy in~$\FV(Y)$, whenever $X\subseteq Y$.

\begin{theorem}\label{T:FVInotlrep}
For every set~$I$, the underlying lattice of~$\FV(I)$ is a completely normal bounded distributive lattice.
Moreover, if $\card{I}\geq\aleph_2$, then~$\FV(I)$ is not a homomorphic image of any \lrep\ lattice.
\end{theorem}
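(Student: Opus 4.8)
The first assertion needs no work: since $\FV(I)$ lies in $\cV\subseteq\cV_0$, its lattice reduct is a completely normal bounded distributive lattice by the remark following Notation~\ref{Not:cVvar}. For the second assertion the plan is to argue by contradiction, feeding the configuration into the one structural fact available about homomorphic images of \lrep\ lattices, namely Lemma~\ref{L:RefI}. So I would suppose that $\FV(I)$ is a homomorphic image of an \lrep\ lattice; then its underlying lattice satisfies the infinitary statement~\eqref{Eq:RefI}. Applying~\eqref{Eq:RefI} to the family $\vecm{\ba_i}{i\in I}$ of free generators yields a family $\vecm{\bc_{i,j}}{(i,j)\in I\times I}$ in $\FV(I)$ meeting all three clauses. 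Each $\bc_{i,j}$ is an $\cL$-term in finitely many generators, so I can define $\Phi\colon[I]^2\to[I]^{<\go}$ by letting $\Phi(\set{i,j})$ be the finite set of those $k$ for which $\ba_k$ occurs in $\bc_{i,j}$ or in $\bc_{j,i}$.

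This is exactly where the cardinality hypothesis is spent. Since $\card I\geq\aleph_2$, Kuratowski's Free Set Theorem at level $n=2$ produces a three\nobreakdash-element free set $\set{i_1,i_2,i_3}$ for $\Phi$, meaning $i_k\notin\Phi\pI{\set{i_1,i_2,i_3}\setminus\set{i_k}}$ for each~$k$; the three free indices are destined to play the role of the three coordinate directions of the cube of Figure~\ref{Fig:UnliftCube}. Next I would test the configuration against the top $D=D_{1,2,3}=\three^3\times\two$ of that cube. Let $d_k\in D$ be the image of the generator $1\in D_k=\three$ under the composite embedding $D_k\hookrightarrow D$ (the two available composites agree, by commutativity of the cube); a direct computation gives $d_1=(2,2,1,1)$, $d_2=(2,1,2,1)$, and $d_3=(1,2,2,1)$. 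By freeness of $\FV(I)$ there is a $\cV$\nobreakdash-homomorphism $h\colon\FV(I)\to D$ with $h(\ba_{i_k})=d_k$ for $k\in\set{1,2,3}$ and $h(\ba_j)=0$ for every other $j$. The free\nobreakdash-set property now pays off: since $\bc_{i_l,i_m}$ does not involve $\ba_{i_k}$, the element $h(\bc_{i_l,i_m})$ lies in the $\cV_0$\nobreakdash-subalgebra generated by $\set{d_l,d_m}$, hence inside the image of the face $D_{\set{l,m}}\cong\three^2$ (embedded into $D$ via $\ba$, $\bb$, or $\bc$).

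The decisive point is then a rigidity observation inside $\three^2$: the pair $\pI{(2,1),(1,2)}$ — the images of the two generators of any face — admits the \emph{unique} splitting $\pI{(2,0),(0,2)}$. By the first two clauses of~\eqref{Eq:RefI} the pair $\pII{h(\bc_{i_k,i_l}),h(\bc_{i_l,i_k})}$ is a splitting of $(d_k,d_l)$ living in that face, so it is forced to be the image of $\pI{(2,0),(0,2)}$ under the relevant face embedding; computing these images yields in particular $h(\bc_{i_1,i_3})=(2,2,0,1)$, $h(\bc_{i_1,i_2})=(2,2,0,0)$, and $h(\bc_{i_2,i_3})=(2,0,0,0)$. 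But the third clause of~\eqref{Eq:RefI}, pushed through the order\nobreakdash-preserving map $h$, demands $h(\bc_{i_1,i_3})\leq h(\bc_{i_1,i_2})\vee h(\bc_{i_2,i_3})$, that is, $(2,2,0,1)\leq(2,2,0,0)$, which fails in the last ($\two$\nobreakdash-)coordinate. This contradiction finishes the proof.

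The step I expect to be most delicate is the pinning\nobreakdash-down of the three values $h(\bc_{i_k,i_l})$: it must combine, simultaneously, the free\nobreakdash-set property of $\set{i_1,i_2,i_3}$ (to confine each $h(\bc_{i_l,i_m})$ to the appropriate two\nobreakdash-generated face) with the uniqueness of the splitting in $\three^2$ (to remove every remaining degree of freedom), and this in turn silently relies on Lemma~\ref{L:StrAmalg}–Lemma~\ref{L:Cube2cVstruct} to guarantee that the faces are genuine $\cV_0$\nobreakdash-subalgebras on which the difference operation is well behaved. Once the values are fixed, the remaining verification that the resulting cocycle violates the triangle clause in the $\two$\nobreakdash-coordinate is exactly what the maps $\br$ and $x\mapsto\ol x$ of Figure~\ref{Fig:UnliftCube} were engineered to force, and is purely computational.
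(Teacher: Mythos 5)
Your proposal is correct and follows essentially the same route as the paper: Lemma~\ref{L:RefI} applied to the free generators, Kuratowski's Free Set Theorem to extract three free indices, evaluation into the top $\three^3\times\two$ of the cube of Figure~\ref{Fig:UnliftCube} with the same generator images $(2,2,1,1)$, $(2,1,2,1)$, $(1,2,2,1)$, the forced splitting $\pI{(2,0),(0,2)}$ in each face, and the same contradiction in the last coordinate. The only cosmetic difference is that you use a single homomorphism $h\colon\FV(I)\to D$ where the paper factors it as $\rho\circ\pi$ through $\FV(\set{1,2,3})$ via the natural transformation $\vec{\rho}$; the content is identical.
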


\begin{proof}
The first statement, that~$\FV(I)$ is completely normal, is obvious.
Suppose, from now on, that $\card{I}\geq\aleph_2$, and denote by~$a_i$, for $i\in I$, the canonical generators of~$\FV(I)$.
By Lemma~\ref{L:RefI}, it suffices to prove that~$\FV(I)$ does not satisfy the infinitary statement~\eqref{Eq:RefI}.
Suppose otherwise.
Then there exists a family $\vecm{c_{i,j}}{(i,j)\in I\times I}$, of elements in~$\FV(I)$, satisfying the conditions stated in~\eqref{Eq:RefI}.

For all $i,j\in I$, there exists a finite subset $\Phi(\set{i,j})$ of~$I$ such that $\set{c_{i,j},c_{j,i}}\subseteq\FV\pI{\Phi(\set{i,j})}$.
Since $\card{I}\geq\aleph_2$ and by Kuratowski's Free Set Theorem (cf. Kuratowski~\cite{Kurat1951}, Erd\H{o}s \emph{et al.} \cite[Theorem~46.1]{EHMR}), there are distinct elements in~$I$, that we may denote by~$1$, $2$, $3$, such that
 \begin{equation}\label{Eq:123indep}
 1\notin\Phi(\set{2,3})\,,\ 2\notin\Phi(\set{1,3})\,,\text{ and }
 3\notin\Phi(\set{1,2})\,.
 \end{equation}
Then $J\eqdef\set{1,2,3}$ is a subset of~$I$.
We shall now define maps $\rho_X\colon\FV(X)\to D_X$, for $X\subseteq J$, as follows:

\begin{itemize}
\item
Denote by $\rho_\es\colon\FV(\es)\to D_\es=\two$ the unique isomorphism.

\item
For $i\in J$, denote by $\rho_i\colon\FV(\set{i})\to D_i=\three$ the unique $\cL$-ho\-mo\-mor\-phism sending~$a_i$ to~$1$.

\item
For $1\leq i<j\leq 3$, denote by $\rho_{i,j}=\rho_{j,i}\colon\FV(\set{i,j})\to D_{i,j}=\three^2$ the unique $\cL$-ho\-mo\-mor\-phism sending~$a_i$ to~$(2,1)$ and~$a_j$ to~$(1,2)$.

\item
Finally, denote by $\rho_{1,2,3}\colon\FV(J)\to D_{1,2,3}=\three^3\times\two$ the unique $\cL$-ho\-mo\-mor\-phism sending~$a_1$ to~$(2,2,1,1)$, $a_2$ to $(2,1,2,1)$, and~$a_3$ to $(1,2,2,1)$.

\end{itemize}

It is straightforward to verify that~$\vec{\rho}\eqdef\vecm{\rho_X}{X\subseteq J}$ is a natural transformation from the diagram~$\vec{F}$ of all~$\FV(X)$, for $X\subseteq J$, to~$\vec{D}$
(it is sufficient to check the required equations on the generators~$a_i$).
The diagrams~$\vec{F}$ and~$\vec{D}$, together with the natural transformation~$\vec{\rho}$, are represented in Figure~\ref{Fig:rho}.

\begin{figure}[htb]
 \[
 \xymatrixrowsep{2pc}\xymatrixcolsep{1.5pc}
 \def\labelstyle{\displaystyle}
 \xymatrix{
 &\FV(\set{1,2,3})\ar[rrrr]|{\,\rho=\rho_{1,2,3}\,}&&&&
 \three^3\times\two & \\
 \FV(\set{1,2})\ar[ur]\ar@/^1.4pc/[rrrr]|(.6){\,\rho_{1,2}\,} &
 \FV(\set{1,3})\ar[u]\ar@/_1.4pc/[rrrr]|(.6){\,\rho_{1,3}\,} &
 \FV(\set{2,3})\ar[ul]\ar@/^1.4pc/[rrrr]|(.4){\,\rho_{2,3}\,} &&
 \three^2\ar[ur]^{\ba} & \three^2\ar[u]^{\bb}
 & \three^2\ar[ul]_{\bc}\\
 \FV(\set{1})\ar[u]\ar[ur]\ar@/^1.4pc/[rrrr]|(.35){\,\rho_{1}\,}
 & \FV(\set{2})\ar[ul]\ar[ur]\ar@/_1.4pc/[rrrr]|(.6){\,\rho_{2}\,}
 &
 \FV(\set{3})\ar[u]\ar[ul]\ar@/^1.4pc/[rrrr]|(.3){\,\rho_{3}\,}
 &&
 \three\ar[u]^{\bbf}\ar[ur]_(.8){\!\!\!\bbf} &
 \three\ar[ul]^(.7){\bg\!\!\!}\ar[ur]_(.8){\!\!\!\bbf} &
 \three\ar[u]_(.7){\bg}\ar[ul]^(.75){\bg}\\
 &\FV(\es)\ar[rrrr]|{\,\rho_\es\,}\ar[ul]\ar[u]\ar[ur] &&&&
 \two\ar[ul]^{\be}\ar[u]^{\be}\ar[ur]_{\be} &
  }
 \]
\caption{The natural transformation~$\vec{\rho}$}
\label{Fig:rho}
\end{figure}
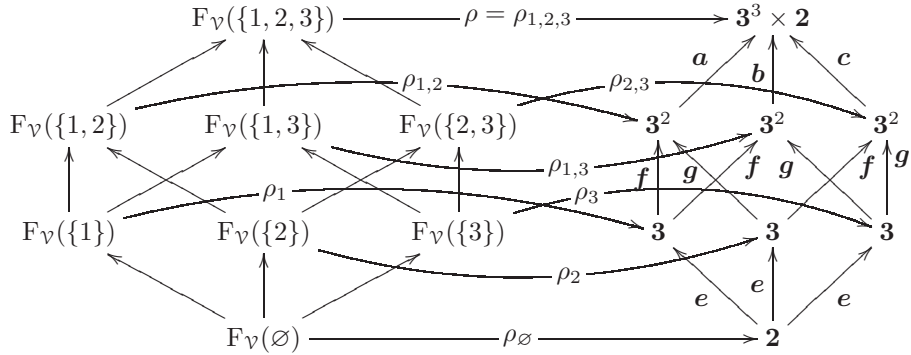

Setting $\rho\eqdef\rho_{1,2,3}$, we obtain, in particular, the equations
 \begin{equation}\label{Eq:arho12etc}
 \ba\circ\rho_{1,2}=\rho\res_{\FV(\set{1,2})}\,,\ 
 \bb\circ\rho_{1,3}=\rho\res_{\FV(\set{1,3})}\,,\ 
 \bc\circ\rho_{2,3}=\rho\res_{\FV(\set{2,3})}\,.
 \end{equation}
Denote by~$\pi\colon\FV(I)\twoheadrightarrow\FV(J)$ the unique $\cL$-ho\-mo\-mor\-phism such that
 \begin{equation}\label{Eq:defnpiai}
 \pi(a_i)\eqdef
 \begin{cases}
 a_i\,,&\text{if }i\in J\,,\\
 0\,,&\text{otherwise}, 
 \end{cases}
 \qquad\text{for every }i\in I\,.
 \end{equation}
The element $d_{i,j}\eqdef\pi(c_{i,j})$, for $i,j\in J$, belongs to~$\FV(J)$.
Moreover, it follows from~\eqref{Eq:defnpiai}, together with the assumptions on the~$c_{i,j}$\,, that the~$d_{i,j}$ satisfy the following relations:
 \begin{align}
 a_i&=(a_i\wedge a_j)\vee d_{i,j}\,,&&\text{whenever }i,j\in J\,;
 \label{Eq:ai2aijvdij}\\
 d_{i,j}\wedge d_{j,i}&=0\,,&&\text{whenever }i,j\in J\,;
 \label{Eq:dijdji0}\\
 d_{i,k}&\leq d_{i,j}\vee d_{j,k}\,,&&\text{whenever }i,j,k\in J
 \label{Eq:dijk}\,.
 \end{align}
Moreover, for distinct $i,j\in J$, the element~$c_{i,j}$ belongs to $\FV\pI{\Phi(\set{i,j})}$, that is, $c_{i,j}=\vt\vecm{a_k}{k\in\Phi(\set{i,j})}$ for an $\cL$-term~$\vt$.
Since~$\pi$ is an $\cL$-ho\-mo\-mor\-phism, $d_{i,j}=\vt\vecm{\pi(a_k)}{k\in\Phi(\set{i,j})}$.
By~\eqref{Eq:123indep} and~\eqref{Eq:defnpiai}, each~$\pi(a_k)$ is either~$0$ (if $k\notin J$) or belongs to $\set{a_i,a_j}$ (if $k\in J$).
It follows that $d_{i,j}\in\FV(\set{i,j})$.

Let $1\leq i<j\leq 3$.
Since $\rho_{i,j}(a_i)=(2,1)$ and $\rho_{i,j}(a_j)=(1,2)$, it follows from~\eqref{Eq:ai2aijvdij} and~\eqref{Eq:dijdji0} that
 \begin{align*}
 (2,1)&=(1,1)\vee\rho_{i,j}(d_{i,j})\,,\\
 (1,2)&=(1,1)\vee\rho_{i,j}(d_{j,i})\,,\\
 (0,0)&=\rho_{i,j}(d_{i,j})\wedge\rho_{i,j}(d_{j,i})\,,
 \end{align*}
which leaves the only possibility
 \begin{equation}\label{Eq:rhoijdij}
 \rho_{i,j}(d_{i,j})=(2,0)\text{ and }\rho_{i,j}(d_{j,i})=(0,2)\,.
 \end{equation}
By applying the homomorphisms~$\ba$, $\bb$, $\bc$, respectively, to the instances $(i,j)=(1,2)$, $(i,j)=(1,3)$, $(i,j)=(2,3)$, respectively, of~\eqref{Eq:rhoijdij}, we obtain, using~\eqref{Eq:arho12etc},
 \[
 \rho(d_{1,2})=(2,2,0,0)\,,\ 
 \rho(d_{1,3})=(2,2,0,1)\,,\ 
 \rho(d_{2,3})=(2,0,0,0)\,,
 \]
whence (projecting on the last coordinate) $\rho(d_{1,3})\nleq\rho(d_{1,2})\vee\rho(d_{2,3})$.
On the other hand, by applying the homomorphism~$\rho$ to the inequality~\eqref{Eq:dijk}, we obtain $\rho(d_{1,3})\leq\rho(d_{1,2})\vee\rho(d_{2,3})$; a contradiction.
\end{proof}

By applying Stone duality, we thus obtain the following result.

\begin{corollary}\label{C:FVInotlrep}
For every set~$I$, the spectrum~$\gO_I$ of the underlying distributive lattice of~$\FV(I)$ is a completely normal spectral space.
Moreover, if $\card{I}\geq\aleph_2$, then~$\gO_I$ cannot be embedded, as a spectral subspace, into any $\ell$-spectrum.
\end{corollary}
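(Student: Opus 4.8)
The plan is to derive Corollary~\ref{C:FVInotlrep} from Theorem~\ref{T:FVInotlrep} by a direct application of Stone duality; essentially all of the mathematical content resides in the theorem, and the corollary merely translates it through the dictionary of Section~\ref{S:Spectral}.

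For the first statement, I would invoke Theorem~\ref{T:FVInotlrep} to note that the underlying lattice of~$\FV(I)$ is a completely normal bounded distributive lattice. Since it is bounded, its spectrum $\gO_I=\Spec{\FV(I)}$ is compact, hence a genuine spectral space, and by Theorem~\ref{T:Stone} its Stone dual $\cKo(\gO_I)$ is isomorphic to~$\FV(I)$. Complete normality of~$\gO_I$ then follows from Monteiro's theorem~\cite{Mont1954} (as recalled just after Remark~\ref{Rk:StoneDual}), precisely because the dual lattice $\cKo(\gO_I)\cong\FV(I)$ is completely normal.

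For the second statement, I would assume $\card{I}\geq\aleph_2$ and argue by contradiction: suppose $\gO_I$ can be embedded, as a spectral subspace, into $\Specl{G}$ for some Abelian \lgrp~$G$. By Remark~\ref{Rk:StoneDual}, a spectral-subspace inclusion dualizes to a surjective lattice homomorphism between the corresponding Stone duals; concretely, the restriction map $V\mapsto\gO_I\cap V$ yields a surjection $\cKo(\Specl{G})\twoheadrightarrow\cKo(\gO_I)$. By Proposition~\ref{P:IdlcG} we have $\cKo(\Specl{G})\cong\Idlc{G}$, which is \lrep\ by definition, while by the previous paragraph $\cKo(\gO_I)\cong\FV(I)$. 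Composing these isomorphisms with the surjection exhibits~$\FV(I)$ as a homomorphic image of the \lrep\ lattice~$\Idlc{G}$, contradicting the second assertion of Theorem~\ref{T:FVInotlrep}.

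The only care needed is bookkeeping rather than substance: I would verify that $\FV(I)$ is genuinely bounded (so that $\gO_I$ is compact and the classical, not merely generalized, notion of spectral space applies) and that the spectral-subspace-versus-surjection correspondence of Remark~\ref{Rk:StoneDual} is invoked in the correct direction. I do not expect a real obstacle here, since the combinatorial heart of the matter---the failure of the infinitary statement~\eqref{Eq:RefI}, obtained via Kuratowski's Free Set Theorem---has already been carried out in the proof of Theorem~\ref{T:FVInotlrep}.
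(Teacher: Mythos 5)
Your proposal is correct and follows exactly the route the paper intends: the paper's own justification of Corollary~\ref{C:FVInotlrep} is simply ``by applying Stone duality'' to Theorem~\ref{T:FVInotlrep}, and your unpacking of that (complete normality via Monteiro's theorem, and the spectral-subspace-to-surjection dualization of Remark~\ref{Rk:StoneDual} combined with Proposition~\ref{P:IdlcG}) is precisely what is meant. No gaps.
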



\providecommand{\noopsort}[1]{}\def\cprime{$'$}
  \def\polhk#1{\setbox0=\hbox{#1}{\ooalign{\hidewidth
  \lower1.5ex\hbox{`}\hidewidth\crcr\unhbox0}}} \def\cprime{$'$}
\providecommand{\bysame}{\leavevmode\hbox to3em{\hrulefill}\thinspace}
\providecommand{\MR}{\relax\ifhmode\unskip\space\fi MR }
\providecommand{\MRhref}[2]{%
  \href{http://www.ams.org/mathscinet-getitem?mr=#1}{#2}
}
\providecommand{\href}[2]{#2}

\end{document}